\documentclass[12pt]{amsart}
\usepackage{pifont}
\usepackage{txfonts}
\usepackage{}
\usepackage{amsfonts}
\usepackage{graphicx}
\usepackage{epstopdf}
\usepackage{amsmath}
\usepackage{amsthm}
\usepackage{latexsym,bm}
\usepackage{amssymb}
\usepackage{esint}
\usepackage{enumitem}
\usepackage{indentfirst}
\usepackage{amscd}
\oddsidemargin0cm \evensidemargin0cm \textwidth16cm \textheight22cm
\topmargin -0.4in \makeatletter


\pagestyle{plain}\pagenumbering{arabic}

\vfuzz2pt 
\hfuzz2pt 
\newtheorem{thm}{Theorem}[section]

\newtheorem{lem}[thm]{Lemma}

\newtheorem{defn}[thm]{Definition}

\newtheorem{Remark}{Remark}
\numberwithin{equation}{section}
\numberwithin{Remark}{section}

\begin{document}

\title{A normalized Ricci flow on surfaces with boundary towards the complete hyperbolic metric}

\renewcommand{\subjclassname}{\textup{2000} Mathematics Subject Classification}
 \subjclass[2010]{Primary 53C25; Secondary 58J05, 53C30, 34B15}

\begin{abstract}
Let $(\overline{M},g_0)$ be a $2$-D compact surface with boundary $\partial M$ and its interior $M$. We show that for a large class of initial and boundary data, the initial-boundary value problem of the normalized Ricci flow $(\ref{equn_RF3})-(\ref{equn_ibp3})$, with prescribed geodesic curvature $\psi$ on $\partial M$, has a unique solution for all $t>0$, and it converges to the complete hyperbolic metric locally uniformly in $M$. Here the natural condition that $\psi>0$ causes the main difficulty in the a priori estimates in the corresponding initial-boundary problem $(\ref{equn_RF3-1})-(\ref{equn_ibp3-1})$ of the parabolic equations, for which an auxiliary Cauchy-Dirichlet problem is introduced. We also provide examples of the boundary data $\psi$ which fits well with the natural asymptotic behavior of the geodesic curvature, but the solution to $(\ref{equn_RF3})-(\ref{equn_ibp3})$ fails to converge to the complete hyperbolic metric.
\end{abstract}

\renewcommand{\subjclassname}{\textup{2000} Mathematics Subject Classification}
 \subjclass[2010]{Primary 53C44; Secondary 35K55, 35R01, 53C21}


\thanks{$^\dag$ Research supported by the National Natural Science Foundation of China No. 11701326.}

\address{Gang Li, Department of Mathematics, Shandong University, Jinan, Shandong Province, China}
\email{runxing3@gmail.com}

\maketitle


\section{Introduction}

 Let $M$ be the interior of a compact surface $\overline{M}$ with boundary $\partial M$, and $\bar{g}$ be a Riemannian metric on $\overline{M}$. We denote by $K$ the Gauss curvature of $\bar{g}$ and $k$ be the geodesic curvature of $\partial M$. For a closed surface $\Sigma^2$, R. Hamilton \cite{Hamilton1}\cite{Hamilton2} introduced the Ricci flow
 \begin{align}\label{equn_RF}
 \frac{\partial}{\partial t}\bar{g}=-2(K-\overline{K})\bar{g},
 \end{align}
with $\overline{K}$ the mean value of the Gauss curvature $K$ on $(\Sigma^2, \bar{g})$. It has been proved by Hamilton \cite{Hamilton2} and Chow \cite{Chow} that for the initial value problem of the Ricci flow on a closed surface starting from every initial metric, there exists a unique solution defined for all $t\geq 0$, and moreover, it converges exponentially to a metric of constant Gauss curvature as $t\to\infty$. Ricci flow on surfaces with boundary has been studied by many authors, see \cite{TLi}\cite{Brendle1}\cite{CM}, and for some generalized curvature flows, see \cite{Brendle2}\cite{Zhang}. In \cite{Brendle1}, based on the variational method, Brendle considered the initial-boundary value problem of $(\ref{equn_RF})$ on $\overline{M}$ with the boundary condition
\begin{align}\label{equn_bddzero}
k=0,
\end{align}
and showed that for every initial metric, there exists a unique solution to the boundary value problem $(\ref{equn_RF})-(\ref{equn_bddzero})$ defined for all $t\geq0$; moreover, the flow converges exponentially to a metric with constant Gauss curvature in $\overline{M}$ and vanishing geodesic curvature on $\partial M$. He also considered another boundary value problem
\begin{align*}
& \frac{\partial}{\partial t}\bar{g}=-2(k-\overline{k})\bar{g},\,\,\text{on}\,\partial M,\\
&K=0,\,\,\text{in}\,\,M,
\end{align*}
with $\overline{k}$ the mean value of the geodesic curvature $k$, and obtain the same convergence result of the flow to a metric with constant geodesic
curvature and vanishing Gauss curvature. In \cite{CM}, the authors considered the initial-boundary value problem on $(\overline{M}^2,g_0)$
\begin{align}\label{equn_RF2}
&\frac{\partial}{\partial t}\bar{g}=-2K\bar{g},\,\text{in}\,M\times [0,T)\,\\
&k=\psi(\cdot,t),\,\,\text{on}\,\partial M \times [0,T),\\
&\label{equn_ibp2}\bar{g}\big|_{t=0}=g_0,\,\,\text{in}\,M,
\end{align}
with the prescribed geodesic curvature function $\psi$ on $\partial M \times (0,T)$. They showed that when the initial data satisfies $K_{g_0}>0$ in $\overline{M}$ and with constant geodesic curvature $k_{g_0}\geq0$, and $\psi\geq 0$ is constant in space and non-increasing in $t$ with $\psi\big|_{t=0}=k_{g_0}$, then the normalized flow corresponding to $(\ref{equn_RF2})-(\ref{equn_ibp2})$ converges to a metric of constant curvature in $\overline{M}$ with totally geodesic boundary. 
 On the other hand, they showed that when $g_0$ is radially symmetric on the disk in $\mathbb{R}^2$, with $k_{g_0}\leq0$ on the boundary with $\psi\equiv k_{g_0}$, then the solution to the normalized flow corresponding to $(\ref{equn_RF2})-(\ref{equn_ibp2})$ exists for all time. For high dimensional compact manifolds with boundary, the research on the boundary value problem of the Ricci flow is referred to \cite{Shen,Cortissoz,Pulemotov,Gianniotis1,Gianniotis2,TChow,KS,Jouttijarvi}, and it is far from complete.

Let $\overline{M}$ be a bounded domain in $\mathbb{R}^n$ with interior $M$ and smooth boundary $\partial M$ with $n\geq3$. In \cite{LN}, Loewner and Nirenberg considered the following boundary value problem (now called the Loewner-Nirenberg problem)
\begin{align}\label{equn_LNF}
&\frac{4(n-1)}{(n-2)}\Delta u=n(n-1)u^{\frac{n+2}{n-2}},\,\,\text{in}\,\,M,\\
&\label{equn_LNBD}u(p)\to+\infty,\,\,\text{as}\,\,p\to\partial M.
\end{align}
Using the maximum principle, they showed the monotonicity of the sequence of solutions $\{u_k\}_{k=1}^{\infty}$ to the corresponding Dirichlet boundary value problems of $(\ref{equn_LNF})$ with boundary data $u=k$ for $k\in \mathbb{N}$, and that $u_k$ converges locally uniformly to the unique solution of the Loewner-Nirenberg problem $(\ref{equn_LNF})-(\ref{equn_LNBD})$ in $M$ as $k\to\infty$. The geometric meaning of the Loewner-Nirenberg problem is that, for any compact domain $\overline{M}$ in the Euclidean space, there exists a unique conformal metric $h=u^{\frac{4}{n-2}}g$ which is complete in $M$, with scalar curvature $-n(n-1)$.  Later Aviles and McOwen \cite{AM,AM1} generalized this result to compact Riemannian manifolds with boundary (for uniqueness of the solution, see \cite{ACF}\cite{LM2}). For the regularity and expansion of the solution near $\partial M$, one refers to \cite{Mazzeo,ACF,BE,Kichenassamy,HJ}. 
 For $n=2$, on a compact surface $(\overline{M},g)$ with its interior $M$ and boundary $\partial M$, the Loewner-Nirenberg problem reduces to
\begin{align}\label{equn_LNFn2}
&-\Delta_gu+K_g=-e^{2u},\\
&\label{equn_LNBDn2}u(p)\to+\infty,\,\,\text{as}\,\,p\to\partial M.
\end{align}
This problem refers to the uniformization theorem, and it was first settled by Bieberbach \cite{Bieberbach}, see \cite{LM} when $\partial M$ has less regularity. One can run the approach of Loewner and Nirenberg to solve the problem directly, and we call the solution the Loewner-Nirenberg solution, denoted as $u_{LN}$. Using the same argument on the regularity of $u_{LN}$ near $\partial M$ as the Loewner-Nirenberg problem in higher dimensions, we have $e^{u_{LN}}-r^{-1}=O(r)$ as $r\to0$, where $r$ is the distance function to $\partial M$ in $(\overline{M},g)$, see \cite{HS}.

For the Loewner-Nirenberg problem on a compact manifolds with boundary of dimension $n\geq3$, inspired by \cite{LN,AM,AM1}, we \cite{GLi} performed the Cauchy-Direchlet problems of two geometric flow approaches and obtain the convergence of the flow to the solution of the Loewner-Nirenberg problem when the boundary data $\phi\to +\infty$ in a certain speed as $t\to\infty$.

In this paper, on a compact surface $(\overline{M},g_0)$ with boundary $\partial M$, we study a normalized Ricci flow approach to $(\ref{equn_LNFn2})-(\ref{equn_LNBDn2})$ with prescribed geodesic curvature on $\partial M$
\begin{align}\label{equn_RF3}
&\frac{\partial}{\partial t}\bar{g}=-2(K_{\bar{g}}+1)\bar{g},\,\text{in}\,\overline{M}\times [0,\infty)\,\\
&k_{\bar{g}}=\psi(\cdot,t),\,\,\text{on}\,\partial M \times [0,\infty),\\
&\label{equn_ibp3}\bar{g}\big|_{t=0}=g_0,\,\,\text{in}\,\overline{M},
\end{align}
with $\bar{g}=e^{2v}g_0$ and $K_{\bar{g}}$ the Gauss curvature of the metric $\bar{g}$ and $k_{\bar{g}}$ the geodesic curvature of $\partial M$ in $(\overline{M},\bar{g})$. Recall that under the conformal change $\bar{g}=e^{2u}g$, we have
\begin{align}
&-\Delta_{g}u+K_{g}=K_{\bar{g}}e^{2u},\\
&\frac{\partial}{\partial n_{g}}u+k_{g}=k_{\bar{g}}e^u,
\end{align}
where $k_{g}$ and $k_{\bar{g}}$ are the geodesic curvatures of $\partial M$ with respect to $g$ and $\bar{g}$, and $n_{g}$ is the outer unit normal vector field of $\partial M$ in $(\overline{M},g)$. Therefore, with a background metric $g$ conformal to $g_0$ and denoting $\bar{g}=e^{2u}g$, the initial-boundary value problem $(\ref{equn_RF3})-(\ref{equn_ibp3})$ can be rewritten as
\begin{align}\label{equn_RF3-1}
&u_t=e^{-2u}(\Delta_{g}u-K_{g})-1,\,\text{in}\,\overline{M}\times [0,\infty)\,\\
&\label{equn_bp3-1}\frac{\partial}{\partial n_{g}}u+k_{g}=\psi e^u,\,\,\text{on}\,\partial M \times [0,\infty),\\
&\label{equn_ibp3-1}u\big|_{t=0}=u_0,\,\,\text{in}\,\overline{M}.
\end{align}
We wonder for what kind of initial data $u_0$ and boundary data $\psi$ the flow $(\ref{equn_RF3-1})-(\ref{equn_ibp3-1})$ converges to the Loewner-Nirenberg metric locally uniformly in $M$. Notice that we are able to use comparison theorem and solutions of Loewner-Nirenberg problem on geodesic balls to obtain a uniform upper bound of the solution to the flow $(\ref{equn_RF3-1})-(\ref{equn_ibp3-1})$ on each compact subset of $M$; while in general there is no uniform lower bound estimates of $u$.

 There are two main difficulties for this problem: one is to figure out the suitable region of the boundary data $\psi$, for which one will find that we need to take the values of $\psi$ in a certain positive interval to fit the behavior of the solution to the Loewner-Nirenberg problem $(\ref{equn_LNFn2})-(\ref{equn_LNBDn2})$ for $t$ large; but then the other big challenge is the a priori estimates near $\partial M$, provided that now the term $\psi$ in $(\ref{equn_bp3-1})$ is positive for $t$ large, and there is few result on a priori estimates near the boundary in this case for second order parabolic equations. Geometrically, that causes difficulties in the estimates on the derivatives of the curvatures when performing Hamilton's approach in \cite{CM}, which forced them to assume that $g_0$ is radial symmetric for a long time existence result. In fact, we provide a counter example, which says that even if $\psi$ converges asymptotically to the right data as $t\to+\infty$, the flow could diverge and go further and further away from the Loewner-Nirenberg metric.

 To handle the initial-boundary problem, we introduce an auxiliary problem: the Cauchy-Dirichlet problem of the normalized Ricci flow:
 \begin{align}\label{equn_RF3-2}
&u_t=e^{-2u}(\Delta_{g}u-K_{g})-1,\,\text{in}\,M\times [0,\infty)\,\\
&u=\phi(\cdot,t),\,\,\text{on}\,\partial M \times [0,\infty),\\
&\label{equn_ibp3-2}u\big|_{t=0}=u_0,\,\,\text{in}\,M,
 \end{align}
which is a generalization of the Cauchy-Dirichlet problem of the Yamabe flow in \cite{GLi}.  We establish a comparison theorem (see Theorem \ref{thm_comparison2}) for solutions to $(\ref{equn_RF3-1})-(\ref{equn_ibp3-1})$, and use solutions to the Cauchy-Dirichlet problem with different Dirichlet boundary data $\phi$ to control the upper and lower bounds of the solution $u$ to $(\ref{equn_RF3-1})-(\ref{equn_ibp3-1})$ to get long time existence and convergence of $u$.

To obtain a solution $u\in C^{2+\alpha,1+\frac{\alpha}{2}}(M\times[0,T])$ to $(\ref{equn_RF3-2})-(\ref{equn_ibp3-2})$, we need the following compatibility condition:
\begin{align}\label{equn_CDPcompatcd1}
&u_{0}(x)=\phi(x,0),\,\,\text{for}\,\,x\in \partial M,\\
&\label{equn_CDPcompatcd1-1}\frac{\partial \phi}{\partial t}(x,0)=e^{-2u_{0}(x)}(\Delta_gu_{0}(x)-K_{g})-1,\,\,\text{for}\,\,x\in\,\partial M.
\end{align}
We have the following existence result for the Cauchy-Dirichlet problem, see Section \ref{section_convergCDP}.
\begin{thm}\label{thm_CDPconvergnl}
Let $(\overline{M},g)$ be a compact surface with its interior $M$ and boundary $\partial M$. Assume that $u_0\in C^{2+\alpha}(\overline{M})$ and $\phi\in C^{2+\alpha,1+\frac{\alpha}{2}}(\partial M\times[0,T])$ for all $T>0$, and also the compatibility condition $(\ref{equn_CDPcompatcd1})-(\ref{equn_CDPcompatcd1-1})$ holds. Moreover, we assume that $\phi_t(x,t)\geq0$ on $\partial M\times[T,\infty)$ for some $T>0$, and $\phi(x,t)\geq \log(\xi(t))$ for $(x,t)\in \partial M\times[T_1,+\infty)$ with some constant $T_1>0$ where $\xi$ is a low-speed increasing function satisfying $(\ref{inequn_lowspeedinc})$. Then there exists a unique solution $u\in C^{2+\alpha,1+\frac{\alpha}{2}}(\overline{M}\times[0,T'])$ for all $T'>0$ to the Cauchy-Dirichlet boundary value problem $(\ref{equn_RF3-2})-(\ref{equn_ibp3-2})$, and $u$ converges locally uniformly to the Loewner-Nirenberg solution $u_{LN}$ in $C^2$ sense in $M$ as $t\to\infty$.
\end{thm}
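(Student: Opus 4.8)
The plan is to first establish short-time existence and uniqueness from the compatibility conditions $(\ref{equn_CDPcompatcd1})$--$(\ref{equn_CDPcompatcd1-1})$ via standard parabolic theory for the quasilinear equation $(\ref{equn_RF3-2})$, then bootstrap to long-time existence using a priori $C^0$ and $C^{2+\alpha}$ bounds, and finally extract the convergence to $u_{LN}$ by a barrier argument combined with parabolic regularity. For short-time existence, I would linearize $(\ref{equn_RF3-2})$ around $u_0$: the coefficient $e^{-2u}$ is positive and smooth, so the equation is uniformly parabolic on any time interval where $u$ stays bounded, and the Cauchy-Dirichlet problem has a unique solution in $C^{2+\alpha,1+\frac{\alpha}{2}}$ on a maximal interval $[0,T_{\max})$ by the classical theory (e.g. Ladyzhenskaya-Solonnikov-Uraltseva), using that $\phi\in C^{2+\alpha,1+\frac{\alpha}{2}}$ and the compatibility conditions hold to first order.

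The heart of the argument is the a priori $C^0$ estimate, which I would obtain via the maximum principle and carefully chosen barriers. For the \emph{upper} bound: as noted in the introduction, one uses solutions of the Loewner-Nirenberg problem on geodesic balls contained in $M$ as spatial supersolutions, together with the fact that the stationary Loewner-Nirenberg solution $u_{LN}$ on $M$ itself (which is $+\infty$ on $\partial M$, hence dominates $\phi$) is a supersolution; by the comparison principle $u\le u_{LN}$ on every compact subset for all time, giving a locally uniform upper bound. For the \emph{lower} bound, which is the main obstacle, I would use the hypothesis $\phi(x,t)\ge\log\xi(t)$ on $\partial M\times[T_1,\infty)$ with $\xi$ the low-speed increasing function of $(\ref{inequn_lowspeedinc})$: the point of the low-speed growth condition is precisely that a spatially constant (or slowly varying) function built from $\log\xi(t)$ is a subsolution of $(\ref{equn_RF3-2})$ — one checks $\frac{d}{dt}\log\xi \le -1 - e^{-2\log\xi}K_g$ type inequality, which is exactly what "low-speed increasing" should encode — and then the monotonicity hypothesis $\phi_t\ge 0$ on $\partial M\times[T,\infty)$ prevents the boundary data from dragging $u$ down. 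Combining these with the maximum principle yields a locally uniform lower bound $u\ge\log\xi(t)-C$ on compact subsets, hence $u$ stays in a compact range on each $M'\Subset M$ and $[0,T]$.

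Once the $C^0$ bound is in hand on $M'\Subset M$, the equation is uniformly parabolic with bounded coefficients there, so interior parabolic Schauder estimates (Krylov-Safonov for $C^\alpha$, then Schauder for $C^{2+\alpha,1+\frac{\alpha}{2}}$) give uniform interior estimates on all derivatives on $M''\Subset M'$, independent of $T$; near $\partial M$ one uses the $C^{2+\alpha,1+\frac{\alpha}{2}}$ regularity of $\phi$ and boundary Schauder estimates. This prevents finite-time blowup, so $T_{\max}=\infty$. For convergence as $t\to\infty$: using $\phi_t\ge 0$ for $t\ge T$ one shows $u_t\ge 0$ eventually (apply the maximum principle to the equation satisfied by $u_t$, whose boundary data is $\phi_t\ge 0$ and whose initial data at time $T$ is bounded below after a short waiting time), so $u(\cdot,t)$ is eventually monotone nondecreasing in $t$ and bounded above by $u_{LN}$; hence it converges pointwise to some limit $u_\infty\le u_{LN}$. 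The uniform interior $C^{2+\alpha}$ bounds upgrade this to $C^2_{loc}$ convergence, and passing to the limit in the equation shows $u_\infty$ solves $-\Delta_g u_\infty + K_g = -e^{2u_\infty}$ in $M$. It remains to identify $u_\infty=u_{LN}$: since $\phi(x,t)\ge\log\xi(t)\to+\infty$, the limit $u_\infty$ blows up at $\partial M$, and by the uniqueness of the Loewner-Nirenberg solution (cited in the introduction) $u_\infty=u_{LN}$. The delicate point throughout is making the low-speed growth condition $(\ref{inequn_lowspeedinc})$ do exactly the double duty of (i) being slow enough that $\log\xi(t)$ is a subsolution and (ii) being fast enough that $u_\infty=+\infty$ on $\partial M$; I expect the subsolution check near the boundary to be where the real work lies.
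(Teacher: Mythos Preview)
Your overall architecture (short-time existence, $C^0$ bounds, then convergence) is right, but the lower-bound mechanism you propose does not work, and this is the crux of the whole theorem.

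You assert that a spatially constant function $v(t)=\log\xi(t)$ is a subsolution of $(\ref{equn_RF3-2})$. Checking this: $v_t=\xi'/\xi$, while $e^{-2v}(\Delta_g v-K_g)-1=-K_g\xi^{-2}-1$. Since $\xi(t)\to\infty$ and $\xi'\le\tau<1$, the left side tends to $0$ and the right side tends to $-1$, so the subsolution inequality \emph{fails} for all large $t$. The low-speed condition $(\ref{inequn_lowspeedinc})$ is not designed to make $\log\xi$ a global subsolution; what it actually buys is that a \emph{boundary barrier} of the form
\[
\underline{u}(x,t)=-\log\bigl(r(x)+\xi(t)^{-1}\bigr)+w(x)-\varepsilon,
\]
with $r$ the distance to $\partial M$ and $w$ a suitable correction, is a subsolution in a collar $U_{\delta}\times[T_2,\infty)$. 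The computation uses $-\epsilon'=\xi'/\xi^2\le\tau\epsilon^2$ (where $\epsilon=\xi^{-1}$) to get $\underline{u}_t\le\tau\epsilon\to 0$, while the elliptic part produces a fixed positive lower bound. This barrier, not a spatially constant one, is what forces the limit $u_\infty$ to blow up at $\partial M$; see the paper's Lemma~\ref{lem_lowerboundbdary}.

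Your second gap is the claim that $u_t\ge 0$ eventually follows from $\phi_t\ge 0$ for $t\ge T$ via the maximum principle applied to the $u_t$-equation. The equation for $w=u_t$ is $w_t=e^{-2u}\Delta_h w-2(w^2+w)$, and to run the maximum principle you need $w(\cdot,T)\ge 0$, which is \emph{not} guaranteed for general $u_0$. The paper does not prove $u_t\ge 0$ for the given solution at all. Instead it introduces an auxiliary Cauchy--Dirichlet solution $v$ whose initial datum $v_0$ is chosen to be a solution of the elliptic equation $\Delta_h v_0=e^{2v_0}$ with $v_0\le u_0$; for \emph{that} solution one has $v_t\ge 0$ from $t=0$ (Theorem~\ref{thm_convertibp1-a}), hence $v\to u_{LN}$ by monotonicity plus the boundary barrier, and then $u\ge v$ by the comparison Lemma~\ref{lem_comparisonCDP1}. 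The upper bound $\limsup_{t\to\infty}\sup_M(u-u_{LN})\le 0$ is obtained directly from the equation for $u-u_{LN}$, without monotonicity of $u$. So the route to convergence is sandwich-by-comparison, not eventual monotonicity of $u$ itself.
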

In the following theorem (Theorem \ref{thm_asymptbdlowspeed1}) we show that when the Dirichlet boundary data $\phi$ increases slowly, then for the solution $u$ in Theorem \ref{thm_CDPconvergnl}, the geodesic curvature $k_{e^{2u}g}$ on $\partial M$ converges to $1$ uniformly as $t\to\infty$ and hence, $k_{e^{2u}g}$ is uniformly bounded.
\begin{thm}
Let $(\overline{M},g)$ be a compact surface with its interior $M$ and boundary $\partial M$. Under the condition of Theorem \ref{thm_CDPconvergnl}, we assume that there exists $T_3>0$ such that $\phi$ depends only on $t$, i.e., $\phi=\phi(t)$ for $t\geq T_3$, and $\phi'(t)\to0$ as $t\to\infty$. Then the solution $u(x,t)$ obtained in Theorem \ref{thm_CDPconvergnl}, which converges locally uniformly in $C^2$ to $u_{LN}$ in $M$, satisfies that the geodesic curvature on $\partial M$
\begin{align*}
k_{e^{2u}g}=e^{-u}(\frac{\partial u}{\partial n_g}+k_g)\to 1
\end{align*}
uniformly on $\partial M$ as $t\to\infty$.
\end{thm}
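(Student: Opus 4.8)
\emph{Sketch of the proof.} Since $\phi_t\ge 0$ for $t\ge T$ and $\phi\ge\log\xi(t)$ with $\xi$ increasing to $+\infty$, we have $\phi(t)\to+\infty$, so $e^{-u}=e^{-\phi(t)}\to 0$ on $\partial M$ and $e^{-\phi(t)}k_g\to 0$ uniformly; it therefore suffices to prove
\[
e^{-\phi(t)}\,\frac{\partial u}{\partial n_g}\Big|_{\partial M}\ \longrightarrow\ 1\qquad\text{uniformly as }t\to\infty .
\]
The plan is to trap $u$ near $\partial M$ between frozen constant–curvature barriers and to read this asymptotics off the barriers. Fix a collar $N_\delta=\partial M\times[0,\delta)$ with Fermi coordinates $(x,r)$, $r=\operatorname{dist}_g(\cdot,\partial M)$, $g=dr^2+h_r$, so that $n_g=-\partial_r$, and put $\mathcal L[w]:=w_t-e^{-2w}(\Delta_g w-K_g)+1$, the operator of $(\ref{equn_RF3-2})$; by the conformal change formula $\mathcal L[w]=w_t+1+K_{e^{2w}g}$.

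For $\eps\in(0,\tfrac12)$ let $\overline W_\eps(\cdot,t)$ solve $K_{e^{2\overline W_\eps}g}\equiv-(1-\eps)$ on $N_\delta$, i.e. $-\Delta_g\overline W_\eps+K_g=-(1-\eps)e^{2\overline W_\eps}$, with Dirichlet data $\overline W_\eps=\phi(t)$ on $\{r=0\}$ and $\overline W_\eps=+\infty$ on $\{r=\delta\}$ (a mixed Loewner--Nirenberg problem on the collar, solvable by the Loewner--Nirenberg method); and let $\underline W_\eps(\cdot,t)$ solve $K_{e^{2\underline W_\eps}g}\equiv-(1+\eps)$ with $\underline W_\eps=\phi(t)$ on $\{r=0\}$ and $\underline W_\eps\equiv c_\delta:=u_{LN}|_{r=\delta}-1$ on $\{r=\delta\}$. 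Each depends on $t$ only through $\phi(t)$, and since a larger Dirichlet datum gives a larger solution, $0\le\partial_tW\le\phi'(t)$ for $W=\overline W_\eps,\underline W_\eps$. As $e^{-2W}(\Delta_gW-K_g)$ equals the constant $1\mp\eps$, one gets $\mathcal L[\overline W_\eps]=\partial_t\overline W_\eps+\eps\ge 0$ and, once $t$ is large enough that $\phi'(t)\le\eps$, $\mathcal L[\underline W_\eps]=\partial_t\underline W_\eps-\eps\le\phi'(t)-\eps\le 0$; so $\overline W_\eps$ is a supersolution and $\underline W_\eps$ a subsolution of $(\ref{equn_RF3-2})$. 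Next, using $-\Delta_gu+K_g=-e^{2u}(1+u_t)$, the function $u(\cdot,t_0)$ is a subsolution of $-\Delta_gv+K_g=-(1-\eps)e^{2v}$ whenever $u_t(\cdot,t_0)\ge-\eps$, and a supersolution of $-\Delta_gv+K_g=-(1+\eps)e^{2v}$ whenever $u_t(\cdot,t_0)\le\eps$; since $u_t\to 0$ uniformly as $t\to\infty$ (a consequence of the convergence $u\to u_{LN}$ in Theorem \ref{thm_CDPconvergnl}, interior parabolic estimates, and $u_t|_{\partial M}=\phi_t\to 0$), both hold at any sufficiently large time $t_0=T_\eps$. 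Elliptic comparison on $N_\delta$ then gives $\underline W_\eps(\cdot,T_\eps)\le u(\cdot,T_\eps)\le\overline W_\eps(\cdot,T_\eps)$ (the data agree at $r=0$ and are ordered at $r=\delta$, using $u(\delta,t)\ge c_\delta$ for $t$ large), and the parabolic comparison principle for $(\ref{equn_RF3-2})$ (cf.\ Theorem \ref{thm_comparison2} and its use in the proof of Theorem \ref{thm_CDPconvergnl}) upgrades this to $\underline W_\eps\le u\le\overline W_\eps$ on $N_\delta\times[T_\eps,\infty)$. Since all three agree with $\phi(t)$ on $\{r=0\}$, comparing inward derivatives at $r=0$ and multiplying by $e^{-\phi(t)}>0$ yields
\[
k_{e^{2\overline W_\eps}g}\big|_{\partial M}\ \le\ k_{e^{2u}g}\big|_{\partial M}\ \le\ k_{e^{2\underline W_\eps}g}\big|_{\partial M}.
\]

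It remains to evaluate the barrier geodesic curvatures. A solution $W$ of $K_{e^{2W}g}\equiv-\mu$ on the collar with finite boundary value $\phi(t)$ at $r=0$ is, near $r=0$, well approximated by the one–dimensional profile $-\log\sinh(\sqrt\mu\,r+a(t))$ with $\sinh a(t)=e^{-\phi(t)}$ (this can be made precise by one–dimensional barriers inside the collar, in the spirit of the estimate $e^{u_{LN}}-r^{-1}=O(r)$), whence $\partial_{n_g}W|_{r=0}=\sqrt\mu\coth a(t)+o(e^{\phi(t)})$ and therefore
\[
k_{e^{2W}g}\big|_{\partial M}=\sqrt\mu\,\cosh a(t)+k_g\sinh a(t)+o(1)\ \longrightarrow\ \sqrt\mu
\]
uniformly on $\partial M$ as $t\to\infty$, since $a(t)\to 0$. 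Applying this with $\mu=1\mp\eps$ and combining with the previous display gives
\[
\sqrt{1-\eps}\ \le\ \liminf_{t\to\infty}\inf_{\partial M}k_{e^{2u}g}\ \le\ \limsup_{t\to\infty}\sup_{\partial M}k_{e^{2u}g}\ \le\ \sqrt{1+\eps}
\]
for every $\eps\in(0,\tfrac12)$; letting $\eps\to 0$ proves $k_{e^{2u}g}\to 1$ uniformly on $\partial M$.

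The two steps I expect to require the most care are: (i) the boundary asymptotics of the frozen solutions, i.e.\ that a constant–curvature conformal metric with large finite conformal factor $\phi(t)$ on $\partial M$ has boundary geodesic curvature tending to $\sqrt{1\mp\eps}$, which I would establish by pinching $W$ between explicit hyperbolic sub/super profiles in the collar and checking that the correction terms do not affect the leading normal derivative; and (ii) the ordering $\underline W_\eps\le u\le\overline W_\eps$ at the initial time $T_\eps$ near $\partial M$, for which the clean argument above needs the uniform decay $u_t\to 0$ on $N_\delta$ — an a priori estimate that should follow from (and may already be contained in) the proof of Theorem \ref{thm_CDPconvergnl}, but whose verification on the collar, where $u\to u_{LN}$ fails, is the delicate point. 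The supersolution side (hence $\liminf k_{e^{2u}g}\ge1$) is the easier half; the matching upper bound, which forces the barrier to bend to curvature below $-1$ by a fixed amount $\eps$ and only afterwards lets $\eps\to0$, is where the hypotheses $\phi=\phi(t)$ and $\phi'(t)\to0$ are genuinely used.
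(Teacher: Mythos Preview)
Your strategy is genuinely different from the paper's, and the gap you yourself flag as point~(ii) is real and not easily filled.

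The paper never uses or proves that $u_t\to 0$ uniformly on the collar. Instead it builds \emph{explicit} barriers
\[
\underline u(x,t)=-\log\bigl(a\,r(x)+e^{-\phi(t)}\bigr)+w(x),\qquad
\overline u(x,t)=-\log\bigl(b\,r(x)+e^{-\phi(t)}\bigr)-w(x),
\]
with $a>1$, $b<1$ close to $1$ and a penalty term $w(x)=A\bigl((r+\delta)^{-p}-\delta^{-p}\bigr)\le 0$. One checks by direct computation that $\underline u$ is a subsolution once $\phi'(t)\le\frac{a^2-1}{4}$ (this is where $\phi'\to 0$ enters), and likewise for $\overline u$. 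The crucial point is the \emph{initial} ordering on $U_{\delta_2}\times\{T_4\}$: since $\underline u=u=\phi$ on $\partial M$, and $\partial_r\underline u|_{r=0}=-a e^{\phi(T_4)}-Ap\delta^{-p-1}$ is enormously negative while $\partial_r u|_{r=0}$ is a fixed finite number, one gets $\underline u\le u$ near $r=0$; away from $r=0$ one forces it by taking $A$, $p$ large. No control on $u_t$ is needed anywhere. The normal derivative of the explicit barrier at $r=0$ then reads off directly as $a$ (resp.\ $b$) plus $O(e^{-\phi})$, and letting $a\downarrow 1$, $b\uparrow 1$ finishes.

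Your argument, by contrast, needs $|u_t|\le\eps$ on all of $N_\delta$ at some time $T_\eps$ to run the elliptic comparison that seeds the parabolic one. The reasons you cite---interior parabolic estimates and $u_t|_{\partial M}=\phi'\to 0$---give $u_t\to 0$ on compact subsets of $M$ and on $\partial M$, but say nothing in between, precisely where $u\to+\infty$ and the coefficients of the linearized equation degenerate. Under the hypotheses of Theorem~\ref{thm_CDPconvergnl} (only $C^{2+\alpha}$ data, $\phi_t\ge 0$ merely for $t\ge T$) you do not have the $C^{4+\alpha,2+\alpha/2}$ regularity needed to write the equation for $v=u_t$ classically, nor the sign $u_t\ge 0$ from the outset; so the maximum-principle route to $\sup_{\overline M}u_t\to 0$ is not available without additional work. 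This is the missing idea, and the paper's penalty term $w(x)$ is exactly the device that replaces it.

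Your step~(i) is also more than a formality: proving that the frozen constant-curvature solution with boundary value $\phi(t)$ has $k\to\sqrt{\mu}$ requires, in the end, the same kind of one-dimensional barrier computation the paper performs directly on $u$. So your two-layer scheme (freeze, then analyze the frozen problem) does not save effort over the paper's single explicit barrier; it trades one barrier calculation for another plus an unproved decay of $u_t$.
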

Notice that there is a big class of functions $\phi(t)$ such that $\phi'(t)\geq0$ and $\phi(t)\geq \log(\xi(t))$ for some low-speed increasing function $\xi(t)$  when $t\geq T$ for some $T>0$, and $\phi'(t)\to 0$ as $t\to\infty$. For instance, $\log(t+1),\,(t+1)^\alpha$ for $0<\alpha<1$, $\log(\log(t+100))$ and so on. We show in Theorem \ref{thm_lowspeedCDdataperturb} the same asymptotic behavior of the geodesic curvature for more general boundary data $\phi$. We also obtain that when $\phi$ increases fast to infinity as $t\to\infty$, the geodesic curvature goes to infinity in a certain speed, see Theorem \ref{thm_asymptbdfastspeed2}.

Then we use the comparison theorem and the a priori estimates on the asymptotic behavior of the geodesic curvatures of $\partial M$ for the Dirichlet boundary data $\phi$ of different growth ratios to obtain a priori estimates of the solution to the problem $(\ref{equn_RF3})-(\ref{equn_ibp3})$, and obtain a sufficient condition on the choice of a large class of $\psi$ for the long time existence and convergence of the flow. The following is the main theorem of the paper.
\begin{thm}\label{thm_convergCO3}
Let $(\overline{M},g)$ be a compact surface with its interior $M$ and boundary $\partial M$. Assume that $u_{01}\in C^{2+\alpha}(\overline{M})$ and $\phi_{01}\in C^{2+\alpha,1+\frac{\alpha}{2}}(\partial M\times[0,T])$ for all $T>0$ satisfy the condition in Theorem \ref{thm_CDPconvergnl}. Let $u_1$ be the solution to the Cauchy-Dirichlet problem $(\ref{equn_RF3-2})-(\ref{equn_ibp3-2})$ with the initial and boundary data $u_{01}$ and $\phi_{01}$. We assume that $u_0\in C^{2+\alpha}(\overline{M})$ and $\psi\in C^{1+\alpha,\frac{1}{2}+\frac{\alpha}{2}}(\partial M\times[0,T])$ for all $T>0$, and also the compatibility condition holds on $\partial M\times\{0\}$:
\begin{align}\label{equn_compatibility2-1}
\frac{\partial}{\partial n_g}u_0+k_{g}=\psi(\cdot,0) e^{u_0}.
\end{align}
Suppose $u_0$ and $\psi$ satisfy the following:
\begin{align}
&\label{equn_initialdatagc} u_0\geq u_{10}\,\,\,\,\text{on}\,\,\overline{M},\\
&\label{equn_boundarydatagc}k_{e^{2u_1}g}\leq \psi \leq y(t)^{\frac{1}{3}}-2,\,\,\,\text{on}\,\,\partial M\times[0,\infty).
\end{align}
where $y(t)\in C^3([0,\infty))$ is some positive function satisfying
\begin{align*}
y'\geq 3y+1
\end{align*}
for $t\in[0,\infty)$. Then there exists a unique solution $u\in C^{2+\alpha,1+\frac{\alpha}{2}}(M\times[0,T])$ to the initial-boundary value problem $(\ref{equn_RF3-1})-(\ref{equn_ibp3-1})$ for all $T>0$, and moreover, $u$ converges locally uniformly in $C^2$ to the Loewner-Nirenberg solution $u_{LN}$ on $M$.
\end{thm}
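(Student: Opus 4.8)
The plan is to run a sub/supersolution argument within the comparison framework of the earlier sections: I will trap $u$ between two solutions of the Cauchy--Dirichlet problem $(\ref{equn_RF3-2})$--$(\ref{equn_ibp3-2})$, one playing the role of a lower barrier and one of an upper barrier, each converging to $u_{LN}$ by Theorem \ref{thm_CDPconvergnl}, and then read off the long--time existence and the locally uniform $C^2$ convergence of $u$ from the squeeze together with interior parabolic regularity. First I would record local solvability: $(\ref{equn_RF3-1})$--$(\ref{equn_ibp3-1})$ is a uniformly parabolic quasilinear equation with the uniformly oblique, nonlinear Robin condition $\frac{\partial u}{\partial n_g}+k_g=\psi e^{u}$, so with $u_0\in C^{2+\alpha}(\overline M)$, $\psi\in C^{1+\alpha,\frac12+\frac\alpha2}$ and the compatibility condition $(\ref{equn_compatibility2-1})$ in force, standard parabolic theory gives a unique solution $u\in C^{2+\alpha,1+\frac\alpha2}(\overline M\times[0,T^{*}))$ on a maximal existence interval $[0,T^{*})$.

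For the lower barrier, the key observation is that the given Cauchy--Dirichlet solution $u_1$ is \emph{also} a solution of the Robin problem $(\ref{equn_RF3-1})$--$(\ref{equn_ibp3-1})$, with boundary datum $\psi_1:=k_{e^{2u_1}g}=e^{-u_1}\big(\tfrac{\partial u_1}{\partial n_g}+k_g\big)$ and initial datum $u_{10}$, since it obeys the same interior equation and, on $\partial M$, $\tfrac{\partial u_1}{\partial n_g}+k_g=k_{e^{2u_1}g}e^{u_1}$ holds by the very definition of $k_{e^{2u_1}g}$. Because $u_0\ge u_{10}$ on $\overline M$ by $(\ref{equn_initialdatagc})$ and $\psi\ge k_{e^{2u_1}g}$ on $\partial M\times[0,\infty)$ by the left inequality in $(\ref{equn_boundarydatagc})$, the comparison Theorem \ref{thm_comparison2} gives $u\ge u_1$ on $\overline M\times[0,T^{*})$; since $u_1\to u_{LN}$ locally uniformly in $C^2$ on $M$, this already yields $\liminf_{t\to\infty}u\ge u_{LN}$ locally uniformly on $M$.

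The heart of the proof is to manufacture an upper barrier from the data $y$. I would choose Dirichlet boundary data $\phi_2=\phi_2(t)$, a fast--increasing function built out of $y$ (its $C^3$--regularity being what guarantees $\phi_2\in C^{2+\alpha,1+\frac\alpha2}$), and initial data $u_{02}\in C^{2+\alpha}(\overline M)$ with $u_{02}\ge u_0$, adjusted so that the compatibility conditions $(\ref{equn_CDPcompatcd1})$--$(\ref{equn_CDPcompatcd1-1})$ and all hypotheses of Theorem \ref{thm_CDPconvergnl} are met; let $u_2$ be the resulting Cauchy--Dirichlet solution, so $u_2\to u_{LN}$ locally uniformly in $C^2$ on $M$. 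The decisive estimate to be proved is
\begin{align*}
k_{e^{2u_2}g}=e^{-\phi_2}\big(\tfrac{\partial u_2}{\partial n_g}+k_g\big)\ \ge\ y(t)^{\frac13}-2\ \ge\ \psi\qquad\text{on }\partial M\times[0,\infty),
\end{align*}
whose second inequality is the right half of $(\ref{equn_boundarydatagc})$ and whose first inequality is exactly where the differential inequality $y'\ge 3y+1$ enters: it is forced by the a priori lower bound for the boundary geodesic curvature under fast--growing Dirichlet data (Theorem \ref{thm_asymptbdfastspeed2}) for $t$ large, together with a direct collar barrier for bounded $t$. Granting this, $u_2$ solves the Robin problem with boundary datum $k_{e^{2u_2}g}\ge\psi$ and initial datum $u_{02}\ge u_0$, so Theorem \ref{thm_comparison2} yields $u\le u_2$ on $\overline M\times[0,T^{*})$, hence $\limsup_{t\to\infty}u\le u_{LN}$ locally uniformly on $M$. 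I expect this step --- finding a $\phi_2$ whose Cauchy--Dirichlet solution still converges to $u_{LN}$ and yet whose boundary geodesic curvature dominates $\psi$ for \emph{all} times --- to be the principal obstacle; it is the precise calibration that the constraint $(\ref{equn_boundarydatagc})$ is designed to supply.

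Finally, on every finite subinterval $[0,T]$ with $T<T^{*}$ the bounds $u_1\le u\le u_2$ hold with $u_1,u_2\in C^{2+\alpha,1+\frac\alpha2}(\overline M\times[0,T])$, so $u$ enjoys a uniform $C^0$ bound up to $\partial M$; hence the equation is uniformly parabolic with controlled (Hölder) coefficients and the oblique boundary operator is uniformly regular, and the Krylov--Safonov and parabolic Schauder estimates bound $u$ in $C^{2+\alpha,1+\frac\alpha2}(\overline M\times[0,T])$ with a constant independent of $T<T^{*}$. A standard continuation argument then forces $T^{*}=\infty$. For convergence, the squeeze $u_1\le u\le u_2$ with both sides tending to $u_{LN}$ gives $u\to u_{LN}$ in $C^0_{loc}(M)$, and combining this with uniform--in--$t$ interior Schauder estimates on compact subsets of $M$ (again supplied by the squeeze) and uniqueness of the limit upgrades it to $u\to u_{LN}$ in $C^2_{loc}(M)$. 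Uniqueness of $u$ follows by applying Theorem \ref{thm_comparison2} in both directions to two solutions sharing the same data.
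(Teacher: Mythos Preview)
Your overall strategy---trapping $u$ between the given Cauchy--Dirichlet solution $u_1$ (reinterpreted as a Robin solution with boundary datum $k_{e^{2u_1}g}$) and an upper barrier built from a fast-growing Cauchy--Dirichlet solution, then invoking Theorem~\ref{thm_comparison2}---is exactly the paper's approach, and your treatment of the lower barrier is correct. The gap is precisely where you flag it: you cannot in general produce a Cauchy--Dirichlet \emph{solution} $u_2$ with both $u_{02}\ge u_0$ on $\overline M$ and $k_{e^{2u_2}g}\ge y(t)^{1/3}-2$ for \emph{all} $t\ge 0$. Theorem~\ref{thm_asymptbdfastspeed2} only delivers the curvature lower bound for $t\ge T_4$ with $T_4$ large and not under your control, and there is no ``direct collar barrier for bounded $t$'' available---the evolution of $k_{e^{2u_2}g}$ on $[0,T_4]$ is not governed by any maximum principle you have established. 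Worse, if $u_0>u_{LN}$ somewhere (the very case Theorem~\ref{thm_convergCO3} addresses beyond Theorem~\ref{thm_convergCO2}), you cannot choose $u_{02}$ to be a subsolution of $(\ref{equn_elliptYb})$ with $u_{02}\ge u_0$, so the monotonicity-in-$t$ that would let a simple time shift work is unavailable.

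The paper's fix is to drop the requirement that the upper barrier be a solution and instead build a \emph{supersolution}, using Remark~\ref{remark_comparison}. One takes $u_{02}$ to be a \emph{solution} of $(\ref{equn_LNFn2})$ (so the resulting $u_2$ is monotone in $t$ by the proof of Theorem~\ref{thm_convertibp1-a}), chooses $\phi_{02}(t)=y(t+t_1)$ for large $t$, time-shifts to $v(x,t)=u_2(x,t+T_4)$ so that Theorem~\ref{thm_asymptbdfastspeed2} applies from $t=0$, and finally sets $\varphi(x,t)=v(x,t+T_5)+D$ with $D\ge\max\{\sup_M(u_0-u_{02}),0\}+1$ and $T_5\ge D$. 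Monotonicity gives $\Delta_g v-K_g=e^{2v}(v_t+1)\ge 0$, whence $\varphi_t\ge e^{-2\varphi}(\Delta_g\varphi-K_g)-1$; clearly $\varphi(\cdot,0)\ge u_0$; and the crucial point is that the exponential growth $\phi_{02}(t+T_5)\ge e^{3T_5}\phi_{02}(t)$ coming from $y'\ge 3y+1$ absorbs the factor $e^{-D}$ in $k_{e^{2\varphi}g}=e^{-D}k_{e^{2v(\cdot,\cdot+T_5)}g}$, yielding $k_{e^{2\varphi}g}\ge y(t)^{1/3}-2\ge\psi$. Note finally that $\varphi\to u_{LN}+D$, not $u_{LN}$, so the squeeze does \emph{not} give the upper convergence; the paper instead obtains $\limsup_{t\to\infty}\sup_M(u-u_{LN})\le 0$ from the interior maximum-principle argument of Section~\ref{section_comparisonthm} once long-time existence is in hand.
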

\begin{Remark}
For any given boundary data $\psi\in C^{1+\alpha,\frac{1}{2}+\frac{\alpha}{2}}(\partial M\times[0,T])$ for all $T>0$, and any function $\bar{u}_0\in C^{2,\alpha}(\overline{M})$ such that $\bar{u}_0>u_{10}$ on $\overline{M}$, let $\varepsilon=\inf_{\overline{M}}(\bar{u}_0-u_{10})$. Then for any $\epsilon\in(0,\varepsilon)$ and $\delta>0$ small, one can always find a function $u_0\in C^{2,\alpha}(\overline{M})$ such that $|u_0-\bar{u}_0|<\epsilon$ on $\overline{M}$, $u_0=\bar{u}_0$ at any point $x\in M$ with the distance $r(x)$ to $\partial M$ satisfying $r(x)\geq \delta$, and moreover, $\psi$ and $u_0$ satisfy the compatibility condition $(\ref{equn_compatibility2-1})$. In particular, $u_0$ satisfies $(\ref{equn_compatibility2-1})$ and $(\ref{equn_initialdatagc})$.
\end{Remark}
With the compatibility condition $(\ref{equn_compatibility2-1})$, the short time existence of the solution to $(\ref{equn_RF3-1})-(\ref{equn_ibp3-1})$ is standard. Indeed, this is a quasilinear parabolic equation with Robin boundary condition, and at $t=0$, and the equation is strictly parabolic. Then by the Inverse Function Theorem and standard methods from the parabolic equation \cite{LSU}\cite{Lieberman}, there exists $\epsilon>0$ such that there exists a unique solution $u\in C^{2+\alpha, 1+\frac{\alpha}{2}}(\overline{M}\times[0,\epsilon])$ to $(\ref{equn_RF3-1})-(\ref{equn_ibp3-1})$.

The control on the geodesic curvature of the Cauchy-Dirichlet problem of the normalized Ricci flow is an important part of this paper, see Section \ref{section_Asympbahavgeocurv1}. We now give an insight into the condition on the prescribed geodesic curvature $\psi$ on $\partial M$ for the initial-boundary value problem $(\ref{equn_RF3-1})-(\ref{equn_ibp3-1})$, in order that the solution $u$ converges to the Loewner-Nirenberg solution $u_{LN}$. Since the Cauchy-Dirichlet problem of the normalized Ricci flow is parallel to that of the Loewner-Nirenberg problem, we consider the following example: let $B_1(0)$ be the unit disk on $\mathbb{R}^2$. For any integer $m\geq 1$, to the boundary value problem
\begin{align}
&\Delta u_m=e^{2u_m},\,\,\text{in}\,\,B_1(0),\\
&u_m\big|_{\partial B_1(0)}=m,
\end{align}
there exists a unique solution $u_m=\log(\frac{2a_m}{1-a_m^2|x|^2})$ with $a_m^2=1+2e^{-2m}-2e^{-m}\sqrt{1+e^{-2m}}$, for $x\in B_1(0)\subseteq \mathbb{R}^2$. Therefore, the geodesic curvature on the boundary satisfies
\begin{align*}
k_{g_m}=e^{-u_m}(\frac{\partial}{\partial n_{g_0}}u_m+k_{g_0})=\frac{ 1-a_m^2|x|^2}{2a_m}[\frac{2a_m^2|x|}{1-a_m^2|x|^2}+k_{g_0}]\big|_{|x|=1}\to 1
\end{align*}
as $m\to +\infty$, which is exactly the limit of the geodesic curvature of the geodesic circles on the hyperbolic plane as the geodesic radius goes to infinity. In general, for the Loewner-Nirenberg problem on a compact manifold $(\overline{M},g)$ of dimension $n$ with boundary, the mean curvature of $\partial M$ of the sequence of solutions to the Yamabe equation with Dirichlet boundary data $u_m\big|_{\partial M}=m$ converges to $n-1$ as $m\to +\infty$, while a direct computation shows that the mean curvature of the level set of each geodesic defining function $x$ of the asymptotically hyperbolic manifold $(M, u_{LN}^{\frac{4}{n-2}}g)$ converges to $n-1$ as $x\to0$, where $u_{LN}$ is the solution to the Loewner-Nirenberg problem. So it seems reasonable to assume that $\psi\to 1$ as $t\to +\infty$. But on the other hand, we give two counter examples which show that even if $\psi\to 1$ as $t\to \infty$, the solution to $(\ref{equn_RF3-1})-(\ref{equn_ibp3-1})$ could diverge and hence does not converge to $u_{LN}$ locally in $M$, see Section \ref{section_counterexamples}, and the example at the end of Section \ref{section_comparisonthm} as a corollary of the comparison theorem--Theorem \ref{thm_comparison2}.

We notice that in this paper, for a two dimensional compact surface $(\overline{M},g)$ with boundary $\partial M$, the boundary $\partial M$ may have many connected components.

The paper is organized as follows: In Section \ref{section_counterexamples}, we give an example of the initial-boundary value problem $(\ref{equn_RF3-1})-(\ref{equn_ibp3-1})$ to which the solution $u$ diverges, although the boundary geodesic curvature $\psi$ is always close to $1$. In Section \ref{section_comparisonthm}, we provide an interior upper bound estimate, and give a comparison theorem of the problem $(\ref{equn_RF3-1})-(\ref{equn_ibp3-1})$ which plays an important role in the global a priori estimates of $u$ and the convergence of $u$, and we also give another example of the problem $(\ref{equn_RF3-1})-(\ref{equn_ibp3-1})$ with the boundary geodesic curvature $\psi\to 1$ as $t\to\infty$ and use the comparison theorem to show that the solution $u$ diverges. In Section \ref{section_convergCDP}, we prove that when the boundary data $\phi\to \infty$ in a speed not too slow as $t\to\infty$, the solution to the Cauchy-Dirichlet problem $(\ref{equn_RF3-2})-(\ref{equn_ibp3-2})$ converges locally uniformly in $M$ to the Loewner-Nirenberg solution $u_{LN}$ as $t\to\infty$. Then in Scetion \ref{section_Asympbahavgeocurv1}, we provide a careful analysis on the asymptotic behavior of the geodesic curvature on $\partial M$ for the solution of $(\ref{equn_RF3-2})-(\ref{equn_ibp3-2})$ as $t\to\infty$, for the cases when $\phi\to \infty$ in different speeds as $t\to\infty$. Finally, in Section \ref{section_convergenceibpgc}, we prove the main theorem: Indeed, we use the solutions to the auxiliary Cauchy-Dirichlet problem with certain low speed and high speed increasing boundary data to control the lower bound and upper bound of the solution $u$ to $(\ref{equn_RF3-1})-(\ref{equn_ibp3-1})$, by the comparison theorem. In Section \ref{section_furtherdiscussion}, we point out the similarity of initial-boundary value problem of the related curvature flows in higher dimensions with prescribed mean curvature on the boundary to the normalized $2$-D Ricci flow with prescribed geodesic curvature on the boundary; we also pose some open problems.

\section{An example of divergence of the flow}\label{section_counterexamples}

Now we take $\overline{M}$ to be the unit disk $\overline{B_1}$ on $\mathbb{R}^2$ with $B_1$ the interior, and define $g_0=e^{2u_0}g_E$ on $\overline{M}$ with $g_E$ the Euclidean metric. Hence by taking the Euclidean metric as the background metric, the system $(\ref{equn_RF3-1})-(\ref{equn_ibp3-1})$ can be written as
\begin{align}\label{equn_RF3-1-0}
&v_t=e^{-2v}\Delta v-1,\,\text{in}\,\overline{B_1}\times [0,\infty)\,\\
&\label{equn_bp3-1-0}\frac{\partial}{\partial r}v=\psi e^v-1,\,\,\text{on}\,\partial B_1 \times [0,\infty),\\
&\label{equn_ibp3-1-0}v\big|_{t=0}=u_0,\,\,\text{in}\,\overline{B_1},
\end{align}
where $\bar{g}=e^{2v}g_0$ and $r=|x|$ the Euclidean distance function to the origin. The compatibility condition $(\ref{equn_compatibility2-1})$ becomes
\begin{align}
&\psi(\cdot,0)=e^{-u_0}(\frac{\partial}{\partial r}u_0+1),
\end{align}
 at $r=1$. Let $[0,T)$ be the largest existence time interval of the solution $v$ to the initial-boundary value problem $(\ref{equn_RF3-1-0})-(\ref{equn_ibp3-1-0})$. Let $\xi(t)\equiv \displaystyle\sup_{x\in\overline{B_1}}v(x,t)$ for $T>t\geq0$. Then, $\xi=\xi(t)$ is locally Lipschitz in $[0,T)$, see \cite{Hamilton}. Denote $(\frac{d \xi}{dt})_+(t)=\displaystyle\limsup_{\tau\searrow 0}\frac{\xi(t+\tau)-\xi(t)}{\tau}$. By the maximum principle, for any $0<t_1<T$, $\displaystyle\sup_{\overline{B_1}\times [0,t_1]}v$ can only be achieved at points in $\overline{B_1}\times\{0\}\bigcup \partial B_1\times (0,t_1]$. If $v(x,t)=\displaystyle\sup_{\overline{B_1}\times [0,t_1]}v$ for some $(x,t)\in \partial B_1\times [0,t_1]$, then $\frac{\partial v}{\partial r}(x,t)\geq0$, and hence by $(\ref{equn_bp3-1-0})$, we have
 \begin{align}\label{inequn_bdmaxpoint}
 v(x,t)\geq -log(\psi(x,t)).
 \end{align}
Now for any small constant $\varepsilon_0\in(0,1)$, we choose the boundary data $\psi\leq 1+\varepsilon_0$ on $\partial B_1 \times[0,+\infty)$, and then choose $u_0$ on $\overline{B_1}$ so that
\begin{align*}
\displaystyle\sup_{\partial B_1}u_0< \displaystyle\sup_{ \overline{B_1}}u_0<-\log(1+\varepsilon_0),
\end{align*}
and the following holds
\begin{align}
&\psi(\cdot,0)=e^{-u_0}(\frac{\partial}{\partial r}u_0+1),
\end{align}
at $r=1$. (Just define $u_0$ near the interior maximum point and near the boundary first, and then extend $u_0$ smoothly on $\overline{B_1}$.) With these initial and boundary data, considering $(\ref{inequn_bdmaxpoint})$, by continuity, we have that $\xi(t)$ is achieved only by some interior points of $\overline{B_1}$ for $t>0 $ small. Let $t_1>0$ be the smallest time when $v(\cdot,t)$ achieves its maximum in $\overline{B_1}$ by a boundary point. For $t\in[0,t_1)$, we define the set
\begin{align*}
S(t)=\{x\in B_1 \big| v(x,t)=\displaystyle\sup_{\overline{B_1}}v(x,t)\}.
\end{align*}
Then $S(t)$ is a compact subset of $B_1$ for $t\in[0,t_1)$. By $(\ref{equn_RF3-1-0})$, at each $x\in S(t)$, we have $v_t(x,t)\leq-1$. Therefore, by Lemma 3.5 in \cite{Hamilton},
\begin{align*}
(\frac{d \xi}{dt})_+(t)\leq -1,
 \end{align*}
and hence $\xi(t)$ is non-increasing for $t\in(0,t_1)$. Let $x\in \partial B_1$ satisfy that $v(x,t_1)=\displaystyle\sup_{\overline{B_1}\times \{t_1\}}v$. Therefore, \begin{align*}
v(x,t_1)\leq\displaystyle\sup_{ \overline{B_1}}u_0<-\log(1+\varepsilon_0).
\end{align*}
But by $(\ref{equn_bp3-1-0})$, we have
 \begin{align}
 v(x,t_1)\geq -\log(\psi(x,t_1))\geq -\log(1+\varepsilon_0),
 \end{align}
which yields a contradiction. Therefore, for the initial data $u_0$ and boundary data $\psi$ we have just chosen, it is always true that for any $t\in(0,T)$, $\xi(t)$ is only achieved at some interior points $x$ of $\overline{B_1}$ and hence $S(t)$ is a compact subset of $B_1$. Then by $(\ref{equn_RF3-1-0})$, for $x\in S(t)$,
\begin{align}
v_t(x,t)\leq -1.
\end{align}
By Lemma 3.5 in \cite{Hamilton}, we have that $(\frac{d \xi}{dt})_+(t)\leq -1$, for $t\in[0,T)$, a.e. Therefore, for our choice of $\psi$ and $u_0$, even here $\psi$ could be chosen to be sufficiently close to $1$ for all time, the solution to the initial-boundary value problem $(\ref{equn_RF3-1-0})-(\ref{equn_ibp3-1-0})$ diverges and never converges to the Loewner-Nirenberg solution (which is corresponding to a complete hyperbolic metric).

\section{Interior upper bound estimates and a comparison theorem for $(\ref{equn_RF3-1})-(\ref{equn_ibp3-1})$}\label{section_comparisonthm}

For a compact surface $(\overline{M},g)$ with its interior $M$ and boundary $\partial M$, for any $\phi\in C^{4,\alpha}(\partial M)$, there exists a unique solution $u\in C^{4,\alpha}(\overline{M})$ to the following Dirichlet boundary value problem of the linear elliptic equation
\begin{align}
&\Delta_gu=K_g,\,\,\text{in}\,\,\overline{M},\\
&u=\phi,\,\,\text{on}\,\,\text{on}\,\,\partial M.
\end{align}
Then the conformal metric $h=e^{2u}g$ has zero Gauss curvature $K_h=0$. Since the initial-boundary problem $(\ref{equn_RF3-1})-(\ref{equn_ibp3-1})$ is conformally invariant, we will always choose a conformal metric $h$ such that $K_h=0$ as the background metric of the flow in this section, and hence $(\ref{equn_RF3-1})-(\ref{equn_ibp3-1})$ becomes
\begin{align}\label{equn_RF3-3}
&u_t=e^{-2u}\Delta_{h}u-1,\,\text{in}\,\overline{M}\times [0,\infty)\,\\
&\label{equn_bp3-3}\frac{\partial}{\partial n_{h}}u+k_{h}=\psi e^u,\,\,\text{on}\,\partial M \times [0,\infty),\\
&\label{equn_ibp3-3}u\big|_{t=0}=u_0,\,\,\text{in}\,\overline{M},
\end{align}
for some given initial data $u_0$, with the compatibility condition
\begin{align}
\frac{\partial}{\partial n_{h}}u_0+k_{h}=\psi(\cdot,0) e^{u_0},
\end{align}
on $\partial M$. We now discuss the uniform upper bounds on $u$ in any compact subset of $M$. We will prove the long time existence of the solution in Section \ref{section_convergenceibpgc}, after the comparison theorem (Theorem \ref{thm_comparison2}) and the estimates of geodesic curvature of solutions to the Cauchy-Dirichlet problem of the normalized Ricci flow in Section \ref{section_Asympbahavgeocurv1}. For any $\bar{x}\in M$, let $r$ be the distance of $\bar{x}$ to $\partial M$ in $(\overline{M},h)$. Let $\overline{U}=\overline{B_{s}}(\bar{x})$ be the closed $s$-geodesic ball of $\bar{x}$ and $U$ be its interior, with $s=\min\{\frac{r}{2}, \frac{r_{\text{inj}}(\bar{x})}{2}\}$ where $r_{\text{inj}}(\bar{x})$ is the injectivity radius at $\bar{x}$ in $(\overline{M},h)$. Let $u_{LN}$ be the Loewner-Nirenberg solution to the equation
\begin{align}
\Delta_hv=e^{2v}
\end{align}
in $U$, with the boundary condition
\begin{align}
\lim_{p\to \partial U}v(p)=+\infty.
\end{align}
 Let $[0,T)$ be the largest existence interval of the solution $u$ to $(\ref{equn_RF3-3})-(\ref{equn_ibp3-3})$. Let $\xi(t)=\displaystyle\sup_{x\in U}(u(x,t)-u_{LN}(x))$ for $0\leq t<T$. We define the set
 \begin{align*}
S(t)=\{x\in U \big| u(x,t)-u_{LN}(x)=\xi(t)\},
\end{align*}
for $t\geq0$. Then $S(t)$ is a compact subset in $U$ and $\xi(t)$ is Lipschitz on $[0,t_1]$ for any $0<t_1<T$. For any $x\in S(t)$, by $(\ref{equn_RF3-3})$, we have
\begin{align}
\frac{\partial}{\partial t}(u-u_{LN})&=[(e^{-2u}-e^{-2u_{LN}})\Delta_hu_{LN}+e^{-2u}\Delta_h(u-u_{LN})]\big|_{(x,t)}\\
&=[e^{2(u_{LN}-u)}-1+e^{-2u}\Delta_h(u-u_{LN})]\big|_{(x,t)}\\
&\leq [e^{2(u_{LN}-u)}-1]\big|_{(x,t)}\\
&=e^{-2\xi(t)}-1,
\end{align}
for $t\geq0$. By the maximum principle of the equation satisfied by $u-u_{LN}$, if $\xi(t_1)\leq 0$ for some $t_1\geq0$, then $\xi(t)\leq0$ for all $t\geq t_1$. Denote $(\frac{d \xi}{dt})_+(t)=\displaystyle\limsup_{\tau\searrow 0}\frac{\xi(t+\tau)-\xi(t)}{\tau}$. By Lemma 3.5 in \cite{Hamilton},
 \begin{align}
(\frac{d \xi}{dt})_+(t)\leq \sup\{\frac{\partial}{\partial t}(u-u_{LN})(x,t)\big|\,x\in S(t)\}\leq e^{-2\xi(t)}-1,
\end{align}
for $t\geq0$. Therefore, $\xi(t)$ is uniformly bounded from the above on the interval $[0,T)$. Moreover, if $T=+\infty$, then $\limsup_{t\to\infty}\xi(t)\leq 0$. Therefore, we have a uniform upper bound of $u$ on each compact subset $K$ of $M$ for $t\in[0,T)$. The a priori estimates near the boundary are difficult when the boundary data $\psi>0$.

Now we prove a comparison theorem.
\begin{thm}\label{thm_comparison2}
Let $K_h=0$ on $\overline{M}$. For $i=1,2$, let $u_i$ be the unique solution to the boundary value problem
\begin{align}\label{equn_RF3-3i}
&u_t=e^{-2u}\Delta_{h}u-1,\,\text{in}\,\overline{M}\times [0,T),\,\\
&\label{equn_bp3-3i}\frac{\partial}{\partial n_{h}}u+k_{h}=\psi_i e^u,\,\,\text{on}\,\partial M \times [0,T),\\
&\label{equn_ibp3-3i}u\big|_{t=0}=u_{0i},\,\,\text{in}\,\overline{M},
\end{align}
with $u_{01}\leq u_{02}$ on $\overline{M}$ and $\psi_1\leq \psi_2$  on $\partial M \times [0,T)$. Then we have
\begin{align}
u_1\leq u_2
\end{align}
for $(x,t)\in \overline{M}\times [0,T)$.
\end{thm}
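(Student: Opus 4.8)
The plan is to recast the statement as a comparison principle for the single linear parabolic equation solved by the difference $w=u_1-u_2$; the only genuine difficulty is the sign of the Robin coefficient that appears, which is the ``wrong'' one precisely in the case $\psi_1>0$ emphasized in the paper.

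Fix $T'\in(0,T)$; since $T'$ is arbitrary it suffices to show $u_1\le u_2$ on $\overline M\times[0,T']$. On this compact set $u_1,u_2$ are of class $C^{2,1}$, so they and their first and second spatial derivatives are bounded. Set $w=u_1-u_2$. Subtracting the two equations $(\ref{equn_RF3-3i})$ and adding and subtracting $e^{-2u_2}\Delta_h u_1$ gives, with $\theta=\theta(x,t)$ between $u_1$ and $u_2$ from the mean value theorem,
\begin{align*}
w_t=e^{-2u_2}\Delta_h w+(e^{-2u_1}-e^{-2u_2})\Delta_h u_1=e^{-2u_2}\Delta_h w-2e^{-2\theta}(\Delta_h u_1)\,w,
\end{align*}
so $w$ solves a linear uniformly parabolic equation $w_t=a\Delta_h w+c\,w$ with $a=e^{-2u_2}>0$ and $c$ bounded. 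On $\partial M\times[0,T']$ the terms $k_h$ in $(\ref{equn_bp3-3i})$ cancel, and using $\psi_1\le\psi_2$, $e^{u_2}>0$, and $e^{u_1}-e^{u_2}=e^{\eta}w$ for some $\eta$ between $u_1$ and $u_2$,
\begin{align*}
\frac{\partial}{\partial n_h}w=\psi_1e^{u_1}-\psi_2e^{u_2}=\psi_1(e^{u_1}-e^{u_2})+(\psi_1-\psi_2)e^{u_2}\le\psi_1e^{\eta}\,w,
\end{align*}
i.e.\ $\frac{\partial}{\partial n_h}w\le b\,w$ on $\partial M\times[0,T']$ with $b=\psi_1e^{\eta}$ bounded, and $w(\cdot,0)=u_{01}-u_{02}\le0$ on $\overline M$.

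The obstruction is that when $\psi_1>0$ one has $b>0$, so the Robin condition has the wrong sign and a maximum of $w$ on $\partial M$ is not excluded by the sign of the outward normal derivative alone. I would remove this by conjugation. Let $B_0=\sup_{\partial M\times[0,T']}|b|$ and choose $\beta\in C^2(\overline M)$ with $\beta>0$ on $\overline M$ and $\frac{\partial}{\partial n_h}\log\beta=2B_0+1$ on $\partial M$; concretely one may take $\beta=e^{-(2B_0+1)d}$ in a collar of $\partial M$, where $d=\mathrm{dist}_h(\cdot,\partial M)$ (so that $\frac{\partial}{\partial n_h}d=-1$ on $\partial M$), and extend $\beta$ to a smooth positive function on $\overline M$. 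Writing $w=\beta\tilde w$, the function $\tilde w$ satisfies $\tilde w_t=a\Delta_h\tilde w+2a\frac{\nabla\beta}{\beta}\cdot\nabla\tilde w+\tilde c\,\tilde w$ with $\tilde c=a\frac{\Delta_h\beta}{\beta}+c$ bounded, $\tilde w(\cdot,0)\le0$, and on $\partial M$
\begin{align*}
\frac{\partial}{\partial n_h}\tilde w\le\Big(b-\frac{\partial}{\partial n_h}\log\beta\Big)\tilde w=(b-2B_0-1)\,\tilde w,
\end{align*}
whose coefficient is now $\le-(B_0+1)<0$, i.e.\ of the good sign.

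Finally I would run the classical maximum principle. Set $\hat w=e^{-\Lambda t}\tilde w$ with $\Lambda=1+\sup|\tilde c|$, so that $\hat w$ solves the same type of equation with zeroth order coefficient $\tilde c-\Lambda\le-1<0$, while $\hat w(\cdot,0)\le0$ and (multiplying the previous boundary inequality by $e^{-\Lambda t}>0$) $\frac{\partial}{\partial n_h}\hat w\le(b-2B_0-1)\hat w$ on $\partial M$. Suppose $\max_{\overline M\times[0,T']}\hat w=m>0$; it is not attained at $t=0$. At an interior maximum point $(x_0,t_0)$ with $t_0>0$ one has $\hat w_t\ge0$, $\nabla\hat w=0$, $\Delta_h\hat w\le0$ and $(\tilde c-\Lambda)\hat w<0$, contradicting the equation; at a boundary maximum point $(x_0,t_0)$ with $t_0>0$ the point $x_0$ maximizes $\hat w(\cdot,t_0)$ over $\overline M$, so $\frac{\partial}{\partial n_h}\hat w(x_0,t_0)\ge0$, whereas the boundary inequality gives $\frac{\partial}{\partial n_h}\hat w(x_0,t_0)\le(b-2B_0-1)m\le-(B_0+1)m<0$, again a contradiction. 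Hence $\hat w\le0$, so $w=\beta e^{\Lambda t}\hat w\le0$ and $u_1\le u_2$ on $\overline M\times[0,T']$; letting $T'\uparrow T$ finishes the proof. The essential point is the wrong-signed Robin condition; equivalently to the conjugation above, one may compare $w$ with the barriers $\varepsilon e^{\Lambda t}\beta$ and let $\varepsilon\downarrow0$.
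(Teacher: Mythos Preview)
Your proof is correct and follows essentially the same route as the paper's: both reduce to a linear parabolic equation for the difference $w=u_1-u_2$, identify the wrong-signed Robin coefficient as the obstruction, and fix it by conjugating with a spatial weight of the form $e^{-Nd}$ (the paper writes $\eta=e^{-N\tau}$ with $\tau=1-r$ near $\partial M$, which is the same device) together with an exponential-in-time factor to make the zeroth-order coefficient negative, after which the classical maximum principle applies. The only cosmetic differences are that the paper combines the two conjugations into a single function $\xi=e^{-at}\eta\,v$ and writes the coefficients $f,\tilde f$ explicitly rather than via the mean value theorem.
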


\begin{proof}
Let $v=u_1-u_2$. For any $t_1\in (0,T)$, we will show that $u_1\leq u_2$ on $\overline{M}\times [0,t_1]$. By $(\ref{equn_RF3-3i})$, we have
\begin{align*}
e^{2u_i}(u_i)_t+e^{2u_i}=\Delta_{h}u_i,\,\text{in}\,M\times [0,t_1]
\end{align*}
for $i=1,2$. By taking difference between these two equations, we have
\begin{align*}
(e^{2u_1}-e^{2u_2})(\frac{\partial u_1}{\partial t}+1)+e^{2u_2}v_t=\Delta_{h}v,\,\text{in}\,M\times [0,t_1],
\end{align*}
and hence,
\begin{align}\label{equn_vrev}
v_t=e^{-2u_2}\Delta_hv-f v, \,\text{in}\,M\times [0,t_1],
\end{align}
where $f=(\frac{\partial u_1}{\partial t}+1)\times\frac{(e^{2v}-1)}{v}$ is a continuous function on $\overline{M}\times [0,t_1]$. Similarly, by $(\ref{equn_bp3-3i})$, we have
\begin{align}\label{equn_bddv}
\frac{\partial v}{\partial n_h}&=(e^{u_1}-e^{u_2})\psi_1+e^{u_2}(\psi_1-\psi_2)\\
&=\tilde{f}\,v+e^{u_2}(\psi_1-\psi_2),\notag
\end{align}
in $\partial M\times [0,t_1]$, where $\tilde{f}=\frac{(e^{u_1}-e^{u_2})\psi_1}{u_1-u_2}$ is continuous on $\partial M\times [0,t_1]$ and it is positive when $\psi_1>0$. Let $r$ be the distance function to the boundary $\partial M$ on $(\overline{M},h)$. There exits a small constant $\frac{1}{10}>\epsilon>0$ such that $r$ is smooth in the $\epsilon$-neighborhood $\overline{U_{\epsilon}}$ of $\partial M$, and the exponential map $F:\,\partial M\times[0,\epsilon]\to \overline{U_{\epsilon}}$ is a diffeomorphism, with $F(p,r)=\text{Exp}_p(-r n)$ where $n$ is the unit outer norm of $\partial M$ at $p$. We define the function $\tau(x)\in C^{2,\alpha}(\overline{M})$ such that $\tau(x)\geq \frac{1}{2}$ on $\overline{M}$, and $\tau(x)=1-r(x)$ for $0\leq r(x)\leq \epsilon$. Now let $\xi=e^{-at}v\eta(x)$, with $a>0$ a large constant to be chosen, and $\eta(x)=e^{-N\tau(x)}$ for some constant $N>0$ to be defined. By $(\ref{equn_vrev})$, we have
\begin{align}
\xi_t&=e^{-2u_2}e^{-at}\eta\Delta_hv+(-f -a)\xi\\
&=e^{-2u_2}\Delta_h\xi-2e^{-2u_2}e^{-at}\nabla_h\eta\cdot\nabla_hv-e^{-2u_2}e^{-at}v\Delta_h\eta+(-f -a)\xi \notag\\
&=e^{-2u_2}\Delta_h\xi-2e^{-2u_2}\nabla_h\eta\cdot\nabla_h(v e^{-at}\eta) \eta^{-1}+2e^{-2u_2}\eta^{-1}(\nabla_h\eta\cdot\nabla_h\eta)\xi-e^{-2u_2}\eta^{-1}\Delta_h\eta\,\xi+(-f -a)\xi\notag\\
&=e^{-2u_2}\Delta_h\xi+X\cdot \nabla_h\xi+[\zeta-f-a]\xi,\notag
\end{align}
where $X=-2e^{-2u_2}\eta^{-1}\nabla_h\eta$ is a continuous vector field on $\overline{M}\times [0,t_1]$ depending on $N$, and $\zeta=2e^{-2u_2}\eta^{-1}\big|\nabla_h\eta\big|_h^2-e^{-2u_2}\eta^{-1}\Delta_h\eta$ is a continuous function on $\overline{M}\times [0,t_1]$ depending on $N$. Then for a fixed large number $N>0$, we choose $a>0$ sufficiently large so that
\begin{align*}
\zeta-f-a<0
\end{align*}
on $\overline{M}\times [0,t_1]$. For the choice of $N>0$, by $(\ref{equn_bddv})$, we have
\begin{align}
\frac{\partial \xi}{\partial n_h}&=e^{-at}\eta\tilde{f}\,v+e^{u_2}e^{-at}\eta(\psi_1-\psi_2)-e^{-at}v\frac{\partial\eta}{\partial r}\\
&=(\tilde{f}-N)\xi+e^{u_2}e^{-at}\eta(\psi_1-\psi_2),
\end{align}
on $\partial M \times [0,t_1]$, where $\tilde{f}$ is a continuous positive function on $\partial M \times [0,t_1]$ depending on $\psi_1$. We now choose $N>0$ large so that $\tilde{f}-N<0$ on $\partial M \times [0,t_1]$. Recall that $\xi(x,0)\leq 0$ for $x\in \overline{M}$, and $\psi_1-\psi_2\leq 0$ on $\partial M\times [0,t_1]$. Using the maximum principle on $\xi$ in the domain $\overline{M}\times [0,t_1]$, we have that $\xi\leq 0$ on $\overline{M}\times [0,t_1]$ for any $t_1\in(0,T)$, and hence, $u_1\leq u_2$ on $\overline{M}\times [0,T)$.

\end{proof}
\begin{Remark}\label{remark_comparison}
Using the same proof of Theorem \ref{thm_comparison2}, we have the comparison theorem still holds when $u_1$ is a subsolution to $(\ref{equn_RF3-3i})$ satisfying
\begin{align*}
&u_t\leq e^{-2u}\Delta_{h}u-1,\,\text{in}\,\overline{M}\times [0,T),\,\\
&\frac{\partial}{\partial n_{h}}u+k_{h}\leq\psi_1 e^u,\,\,\text{on}\,\partial M \times [0,T),\\
&u\big|_{t=0}\leq u_{01},\,\,\text{in}\,\overline{M},
\end{align*}
and $u_2$ is a supersolution to $(\ref{equn_RF3-3i})$ satisfying
\begin{align*}
&u_t\geq e^{-2u}\Delta_{h}u-1,\,\text{in}\,\overline{M}\times [0,T),\,\\
&\frac{\partial}{\partial n_{h}}u+k_{h}\geq\psi_2 e^u,\,\,\text{on}\,\partial M \times [0,T),\\
&u\big|_{t=0}\geq u_{02},\,\,\text{in}\,\overline{M},
\end{align*}
with $u_{01}\leq u_{02}$ on $\overline{M}$ and $\psi_1\leq \psi_2$ on $\partial M\times[0,\infty)$.
\end{Remark}
This comparison theorem will be important for the discussion of the choice of the initial data $u_0$ and the boundary data $\psi$, and the long time existence and the convergence of the corresponding solutions.

As a consequence of Theorem \ref{thm_comparison2}, we now show that the solution to the following boundary value problem will not converge to the Loewner-Nirenberg metric. Let $(\overline{M},h)$ be the Euclidean unit disk $(\overline{B_1},g_E)\subseteq \mathbb{R}^2$ and $r$ the distance to the origin. We consider the initial-boundary value problem $(\ref{equn_RF3-3})-(\ref{equn_ibp3-3})$ and let $u$ be the unique solution, where $u_0$ is a radial symmetric solution to the equation
\begin{align}
\Delta u_0=e^{2u_0}
\end{align}
in $\overline{B_1}$, and $\psi=\psi(t)$ satisfies $\frac{\partial u_0}{\partial r}=\psi e^{u_0}-1$ on $\partial B_1 \times \{0\}$. For instance, just take $u_0=\log(\frac{2b}{1-b^2|x|^2})$ with some constant $b\in (0,1)$. Then we have $\psi(0)>1$. We then choose $\psi=\psi(t)$ so that $\psi'(t)\leq0$ for $t\geq0$ and $\displaystyle\lim_{t\to+\infty}\psi(t)=1$. Recall that $u_0$ is the unique solution to the initial-boundary value problem
\begin{align*}
&u_t=e^{-2u}\Delta_{h}u-1,\,\text{in}\,\overline{B_1}\times [0,\infty)\,\\
&\frac{\partial}{\partial n_{h}}u+k_{h}=\psi(0) e^u,\,\,\text{on}\,\partial B_1 \times [0,\infty),\\
&u\big|_{t=0}=u_0,\,\,\text{in}\,\overline{B_1},
\end{align*}
Since $\psi(t)\leq \psi(0)$ for $t\geq0$, by Theorem \ref{thm_comparison2}, we have that $u(x,t)\leq u_0(x)$ for any $(x,t)\in \overline{B_1}\times [0,\infty)$. Hence, the solution $u$ will not converge to the Loewner-Nirenberg solution as $t\to\infty$ on any compact subset of $B_1$.

\section{Convergence of the Cauchy-Dirichlet problem  of the normalized Ricci flow}\label{section_convergCDP}

For a compact surface $(\overline{M},h)$ with its interior $M$ and boundary $\partial M$, without loss of generality, we take $h$ to be the background conformal metric such that the Gauss curvature $K_h=0$. Then $(\ref{equn_RF3-2})-(\ref{equn_ibp3-2})$ can be written as
 \begin{align}\label{equn_RF3-2-1}
&u_t=e^{-2u}\Delta_{h}u-1,\,\text{in}\,\overline{M}\times [0,\infty)\,\\
&u=\phi(\cdot,t),\,\,\text{on}\,\partial M \times [0,\infty),\\
&\label{equn_ibp3-2-1}u\big|_{t=0}=u_0,\,\,\text{in}\,M,
 \end{align}
with the compatibility condition
 \begin{align}\label{equn_RF3-2-1cpr1}
&u_0(x)=\phi(x,0),\,\,\text{for}\,\,x\in \partial M,\\
&\label{equn_RF3-2-1cpr2}\frac{\partial \phi}{\partial t}(x,0)=e^{-2u_0(x)}\Delta_{h}u_0(x)-1,\,\,\text{for}\,\,x\in\,\partial M.
 \end{align}
In order that $u\in C^{4+\alpha,2+\frac{\alpha}{2}}(\overline{M}\times[0,\epsilon])$ for some $\epsilon>0$, we need the additional condition
\begin{align}\label{equn_RF3-2-1cpr42}
\frac{\partial^2\phi}{\partial t^2}(x,0)=e^{-2u_0}[-2\Delta_hu_0(e^{-2u_0}\Delta_{h}u_0-1)+\Delta_h(e^{-2u_0}\Delta_{h}u_0)](x),
\end{align}
for $x\in \partial M$. Notice that when $u_0$ is a solution to equation
\begin{align}\label{equn_elliptYb}
\Delta_{h}u=e^{2u},
\end{align}
on $\overline{M}$, then by $(\ref{equn_RF3-2-1cpr2})$ and $(\ref{equn_RF3-2-1cpr42})$, we have $\phi_t(x,0)=\phi_{tt}(x,0)=0$ for $x\in \partial M$. The strategy in this section is parallel to those in \cite{GLi} and \cite{GLi2}.

\begin{defn}
We call a function $\xi(t)\in C^1([0,+\infty))$ a low-speed increasing function if, $\xi(t)>0$ for $t\geq0$, $\displaystyle\lim_{t\to\infty}\xi(t)=\infty$, and there exist two constants $T>0$ and $1>\tau>0$ such that for $t\geq T$, we have
\begin{align}\label{inequn_lowspeedinc}
\xi'(t)\leq \tau.
\end{align}
\end{defn}

We first show the long time existence of the solution to the Cauchy-Dirichlet problem $(\ref{equn_RF3-2-1})-(\ref{equn_ibp3-2-1})$.
\begin{thm}\label{thm_longexistence1}
Let $(\overline{M},h)$ be a compact surface with its interior $M$ and boundary $\partial M$ such that $K_h=0$. Assume that $u_0\in C^{2,\alpha}(\overline{M})$ and $\phi\in C^{2+\alpha,1+\frac{\alpha}{2}}(\partial M\times[0,T])$ for all $T>0$ such that the compatibility condition $(\ref{equn_RF3-2-1cpr1})-(\ref{equn_RF3-2-1cpr2})$ holds. Then there exists a unique solution $u$ to the Cauchy-Dirichlet problem $(\ref{equn_RF3-2-1})-(\ref{equn_ibp3-2-1})$ for all time $t\geq0$, and $u\in C^{2+\alpha,1+\frac{\alpha}{2}}(\overline{M}\times[0,T])$ for all $T>0$. Moreover, if $u_0\in C^{4+\alpha}(\overline{M})$ and $\phi\in C^{4+\alpha,2+\frac{\alpha}{2}}(\partial M\times[0,T])$ for $T>0$, and also the compatibility condition $(\ref{equn_RF3-2-1cpr42})$ holds, then $u\in C^{4+\alpha,2+\frac{\alpha}{2}}(\overline{M}\times[0,T])$ for all $T>0$.
\end{thm}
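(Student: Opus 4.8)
The plan is to combine the standard short-time existence for the quasilinear problem with an a priori $C^0$ estimate that keeps the equation uniformly parabolic, a parabolic regularity bootstrap, and a continuation argument. Rewriting $(\ref{equn_RF3-2-1})$ as $e^{2u}u_t=\Delta_h u-e^{2u}$ (recall $K_h=0$), this is a quasilinear equation which is strictly parabolic at $t=0$ since $e^{-2u_0}$ is positive and smooth; with Dirichlet data $\phi\in C^{2+\alpha,1+\frac{\alpha}{2}}$ and the first-order compatibility $(\ref{equn_RF3-2-1cpr1})$--$(\ref{equn_RF3-2-1cpr2})$, the Inverse Function Theorem and standard methods for parabolic equations \cite{LSU,Lieberman} produce $\epsilon>0$ and a unique $u\in C^{2+\alpha,1+\frac{\alpha}{2}}(\overline{M}\times[0,\epsilon])$; let $[0,T_{\max})$ be the maximal interval of existence in this class.

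Next I would establish a priori $C^0$ bounds on any finite cylinder $\overline{M}\times[0,T]$. For the upper bound, a space-time interior maximum of $u$ is impossible: there $\nabla_h u=0$, $\Delta_h u\le0$ and $u_t\ge0$, so $0\le u_t=e^{-2u}\Delta_h u-1\le-1$, a contradiction; hence $u\le\max\{\max_{\overline{M}}u_0,\ \max_{\partial M\times[0,T]}\phi\}$. For the lower bound, set $w=u+t$, so $w_t=e^{-2(w-t)}\Delta_h w$; at an interior spatial minimum of $w(\cdot,t)$ one has $\Delta_h w\ge0$, hence $w_t\ge0$ there, and by Lemma 3.5 in \cite{Hamilton} the quantity $\min_{\overline{M}}w(\cdot,t)$ is nondecreasing while its minimum stays interior and is otherwise bounded below by $\min_{\partial M\times[0,T]}(\phi+t)$. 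Either way $u\ge-C(T)$ on $\overline{M}\times[0,T]$ with $C(T)$ depending only on $T$, $u_0$, $\phi$ and $h$, so $e^{-2u}$ lies in a fixed compact subset of $(0,\infty)$ there and the equation is uniformly parabolic.

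Once $u$ is bounded, $u_t=e^{-2u}\Delta_h u-1$ is uniformly parabolic in nondivergence form with bounded coefficient $e^{-2u}$, so the Krylov--Safonov (or parabolic De Giorgi--Nash--Moser) estimate up to the smooth boundary, using that $\phi$ is Hölder, gives $u\in C^{\beta,\beta/2}(\overline{M}\times[0,T])$ for some $\beta\in(0,\alpha]$; then $e^{-2u}\in C^{\beta,\beta/2}$, and linear parabolic Schauder theory for $u_t=e^{-2u}\Delta_h u-1$ together with the compatibility $(\ref{equn_RF3-2-1cpr1})$--$(\ref{equn_RF3-2-1cpr2})$ upgrades $u$ to $C^{2+\alpha,1+\frac{\alpha}{2}}(\overline{M}\times[0,T])$, with a bound $C(T)$ that stays finite as $t\uparrow T_{\max}$ whenever $T_{\max}<\infty$. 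In that case $u$ and the data needed for the corner compatibility at $t=T_{\max}$ extend continuously, so restarting the short-time result from $t=T_{\max}$ contradicts maximality; hence $T_{\max}=\infty$ and $u\in C^{2+\alpha,1+\frac{\alpha}{2}}(\overline{M}\times[0,T])$ for every $T>0$. Uniqueness follows from the maximum principle applied to the difference of two solutions, exactly as in the proof of Theorem \ref{thm_comparison2} with equal Dirichlet data in place of $\psi_1\le\psi_2$.

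For the higher regularity, assume in addition $u_0\in C^{4+\alpha}(\overline{M})$, $\phi\in C^{4+\alpha,2+\frac{\alpha}{2}}$ and $(\ref{equn_RF3-2-1cpr42})$. I would differentiate the equation in $t$: $w:=u_t$ solves the linear parabolic problem $w_t=e^{-2u}\Delta_h w-2w(w+1)$ in $M$, with $w=\phi_t\in C^{2+\alpha,1+\frac{\alpha}{2}}$ on $\partial M$ and $w|_{t=0}=e^{-2u_0}\Delta_h u_0-1\in C^{2+\alpha}(\overline{M})$; the zeroth-order corner compatibility for this problem is $(\ref{equn_RF3-2-1cpr2})$ and the first-order one is exactly $(\ref{equn_RF3-2-1cpr42})$, so Schauder theory gives $u_t\in C^{2+\alpha,1+\frac{\alpha}{2}}(\overline{M}\times[0,T])$. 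Since for each fixed $t$ the elliptic equation $\Delta_h u(\cdot,t)=e^{2u}(u_t+1)$ has right-hand side in $C^{2+\alpha}(\overline{M})$ and boundary value $\phi(\cdot,t)\in C^{4+\alpha}$, elliptic Schauder yields $u(\cdot,t)\in C^{4+\alpha}(\overline{M})$ uniformly in $t$, and with $u_t\in C^{2+\alpha,1+\frac{\alpha}{2}}$ this gives $u\in C^{4+\alpha,2+\frac{\alpha}{2}}(\overline{M}\times[0,T])$ for all $T>0$. The only genuinely delicate point is the $C^0$ lower bound: because $e^{-2u}$ degenerates as $u\to-\infty$, this has to be ruled out before any linear theory applies, and the substitution $w=u+t$ does so only on finite intervals with $C(T)$ deteriorating as $T\to\infty$---which is precisely why the global-in-time control behind convergence to $u_{LN}$ in Theorem \ref{thm_CDPconvergnl} requires the low-speed increasing hypothesis on $\phi$ rather than coming for free.
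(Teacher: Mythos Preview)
Your proof is correct and follows essentially the same route as the paper: short-time existence via the Inverse Function Theorem and \cite{LSU}, then $C^0$ upper and lower bounds via the maximum principle and Hamilton's Lemma 3.5, then parabolic regularity to continue. Your lower-bound substitution $w=u+t$ is just a repackaging of the paper's direct estimate $(\frac{d}{dt})_+\inf_{\overline{M}}u\ge\min\{\inf_{\partial M}\phi_t,\,-1\}$, and the conclusion is the same. Where the paper says ``standard theory of parabolic equations'' and ``standard regularity argument'', you spell out the Krylov--Safonov/Schauder bootstrap and the higher-order step via differentiation in $t$; both elaborations are correct and fill in details the paper omits.
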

\begin{proof}
The short time existence of the solution is classical. Indeed, at $t=0$, the equation is a strictly parabolic equation. By the compatibility condition, using the inverse function theorem and standard methods from the parabolic equation \cite{LSU}, there exists $\epsilon>0$ such that there exists a unique solution $u\in C^{2+\alpha,1+\frac{\alpha}{2}}(\overline{M}\times[0,\epsilon])$ to the Cauchy-Dirichlet problem. By the standard methods of the parabolic equation, to obtain the long time existence, we only need to show the upper bound and lower bound estimates on $u$ on $\overline{M}\times[0,T]$ for any $T>0$.

By the maximum principle, $u(x,t)\leq \max\{\sup_{\overline{M}}u_0,\,\sup_{\partial M\times[0,T]}\phi\}$ for $(x,t)\in \overline{M}\times[0,T]$ for any $T>0$.

For the lower bound estimates, let $[0,T)$ be the largest existence time interval of the solution $u$. Let $\xi(t)=\inf_{\overline{M}}u(\cdot,t)$. Then $\xi(t)$ is Lipschitz in $[0,t_1]$ for any $t_1\in(0,T)$. Therefore, the derivative $\xi'(t)$ exists for almost every $t$ and $\xi(t)-\xi(0)=\int_0^t\xi'(s)ds$ for $t\in(0,T)$. We define $(\frac{d \xi}{dt})_+(t)=\displaystyle\limsup_{\tau\searrow 0}\frac{\xi(t+\tau)-\xi(t)}{\tau}$. Denote the set
\begin{align*}
S(t)=\{x\in\overline{ M}\big|\,u(x,t)=\inf_{\overline{M}}u(\cdot,t)\},
\end{align*}
for $t\geq0$. Then $S(t)$ is compact. By Lemma 3.5 in \cite{Hamilton},
 \begin{align}
(\frac{d \xi}{dt})_+(t)\geq \inf\{\frac{\partial}{\partial t}u(x,t)\big|\,x\in S(t)\}\geq \min\{\inf_{\partial M}\frac{\partial\phi}{\partial t}(\cdot,t),\,-1\},
\end{align}
for $t\geq0$ and hence, $\xi(t)$ is uniformly bounded from below in $[0,T)$ if $T<+\infty$. Therefore, we obtain the uniform upper and lower bounds on $u$ for any finite time intervals. By the standard theory of parabolic equations, the $C^{2+\alpha,1+\frac{\alpha}{2}}(\overline{M}\times[0,t_1])$ norm of $u$ is uniformly controlled for any $t_1>0$ and hence, the solution $u$ exists for all $t\geq0$. Also, under the compatibility condition $(\ref{equn_RF3-2-1cpr42})$, by the standard regularity argument of parabolic equations, we obtain the required regularity in the theorem. This completes the proof of the theorem.

\end{proof}

Now we give a comparison lemma.

\begin{lem}\label{lem_comparisonCDP1}
Let $(\overline{M}^2,h)$ be a compact surface with its interior $M$ and boundary $\partial M$. Assume the Gauss curvature $K_h=0$. Let $u$ be a sub-solution to $(\ref{equn_RF3-2-1})$, i.e., $u_t\leq e^{-2u}\Delta_{h}u-1,\,\text{in}\,\overline{M}\times [0,T)$, and $v$ be a sup-solution to $(\ref{equn_RF3-2-1})$, i.e., $v_t\geq e^{-2v}\Delta_{h}v-1,\,\text{in}\,\overline{M}\times [0,T)$. Suppose that $u\leq v$ on $\partial M\times[0,T) \bigcup \overline{M}\times \{0\}$. Then we have $u\leq v$ on $\overline{M}\times [0,T)$.
\end{lem}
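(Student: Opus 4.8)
The plan is to run a standard maximum-principle comparison argument, with a perturbation to handle the nonlinear coefficient $e^{-2u}$ and the degenerate nature of the inequality at the supremum. First I would introduce, for $\eps>0$ small, the perturbed function $v_\eps = v + \eps(1+t)$, which is a strict supersolution: since $t\mapsto \eps(1+t)$ has derivative $\eps>0$ and adding a positive constant only decreases $e^{-2v}$ while leaving $\Delta_h v_\eps = \Delta_h v$, one checks $(v_\eps)_t > e^{-2v_\eps}\Delta_h v_\eps - 1$ on $\overline{M}\times[0,T)$, and $u < v_\eps$ strictly on the parabolic boundary $\partial M\times[0,T)\cup \overline{M}\times\{0\}$. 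It suffices to show $u\le v_\eps$ on $\overline{M}\times[0,T)$ for every $\eps>0$ and then let $\eps\to 0$.

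Next I would fix any $t_1\in(0,T)$ and argue on $\overline{M}\times[0,t_1]$. Suppose for contradiction that $w := u - v_\eps$ becomes positive somewhere on this compact set. Let $(x_0,t_0)$ be a point where $w$ attains its positive maximum, with $t_0$ as small as possible. Since $w<0$ on the parabolic boundary, we have $t_0>0$ and $x_0\in M$ is an interior spatial point (or, if one prefers to allow $x_0\in\partial M$, there $w<0$ already, so in fact $x_0\in M$). At $(x_0,t_0)$ the first-derivative-in-time gives $w_t(x_0,t_0)\ge 0$, the spatial gradient vanishes, and $\Delta_h w(x_0,t_0)\le 0$, so $\Delta_h u(x_0,t_0)\le \Delta_h v(x_0,t_0)$. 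Combining the subsolution and strict-supersolution inequalities at $(x_0,t_0)$:
\begin{align*}
0 \le w_t = u_t - (v_\eps)_t < \left(e^{-2u}\Delta_h u - 1\right) - \left(e^{-2v_\eps}\Delta_h v_\eps - 1\right) = e^{-2u}\Delta_h u - e^{-2v_\eps}\Delta_h v.
\end{align*}
At the maximum $u(x_0,t_0) > v_\eps(x_0,t_0)$, so $e^{-2u} < e^{-2v_\eps}$ there. If $\Delta_h u(x_0,t_0)\ge 0$, then $e^{-2u}\Delta_h u \le e^{-2v_\eps}\Delta_h u \le e^{-2v_\eps}\Delta_h v$ (using $\Delta_h u\le \Delta_h v$ and $e^{-2v_\eps}>0$), contradicting the strict inequality above; if $\Delta_h u(x_0,t_0)<0$, then $e^{-2u}\Delta_h u < e^{-2v_\eps}\Delta_h u \le 0 \le$ (depends on sign of $\Delta_h v$)—more carefully, $e^{-2u}\Delta_h u < e^{-2v_\eps}\Delta_h u \le e^{-2v_\eps}\Delta_h v$ again since $\Delta_h u\le \Delta_h v$ and the factor $e^{-2v_\eps}$ is positive. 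Either way we reach a contradiction, so $w\le 0$ on $\overline{M}\times[0,t_1]$.

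Since $t_1<T$ was arbitrary, $u\le v_\eps$ on $\overline{M}\times[0,T)$, and letting $\eps\to 0$ gives $u\le v$ there, as desired. I expect the only real subtlety to be the correct handling of the sign of $\Delta_h u$ at the maximum point in the chain of inequalities above — one must be careful that multiplying by the smaller positive factor $e^{-2u}$ does not reverse the desired inequality, which is exactly why the perturbation making the supersolution \emph{strict} is convenient (it removes any borderline equality case). An alternative, essentially equivalent route avoiding pointwise maximum points is to use the Lipschitz function $\xi(t)=\sup_{\overline{M}}(u-v_\eps)(\cdot,t)$ together with Lemma~3.5 of \cite{Hamilton}, exactly as in the interior-upper-bound computation earlier in this section; I would present whichever is shorter, but the perturbation trick is needed in both.
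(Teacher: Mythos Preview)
Your Case 2 contains a sign error that breaks the argument. When $\Delta_h u(x_0,t_0)<0$ and $e^{-2u}<e^{-2v_\eps}$ (which holds since $u>v_\eps$ at the maximum), multiplying the negative number $\Delta_h u$ by the \emph{smaller} positive factor $e^{-2u}$ gives a \emph{larger} (less negative) result, so in fact $e^{-2u}\Delta_h u > e^{-2v_\eps}\Delta_h u$, the reverse of what you wrote. The chain of inequalities therefore collapses and no contradiction is reached. You correctly flagged this as the delicate point, but the resolution you gave is exactly backwards. The same issue undermines your claim that $v_\eps=v+\eps(1+t)$ is a strict supersolution: if $\Delta_h v<0$ somewhere, replacing $e^{-2v}$ by the smaller $e^{-2v_\eps}$ \emph{increases} $e^{-2v_\eps}\Delta_h v$, and the extra $\eps$ in $(v_\eps)_t$ need not compensate.

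The paper avoids this difficulty by linearizing rather than comparing the two nonlinear expressions directly. Setting $\xi=u-v$ and subtracting the two differential inequalities yields
\[
\xi_t \le e^{-2u}\Delta_h\xi + (e^{-2u}-e^{-2v})\Delta_h v
      = e^{-2u}\Delta_h\xi + f(x,t)\,\xi,
\]
where $f(x,t)=e^{-2v}\Delta_h v\cdot \xi^{-1}(e^{-2\xi}-1)$ (extended continuously to $-2e^{-2v}\Delta_h v$ where $\xi=0$) is bounded and continuous on each $\overline{M}\times[0,t_1]$. This is now a \emph{linear} parabolic inequality for $\xi$ with a bounded zero-order coefficient, and the classical weak maximum principle (absorb $f$ via $\xi\mapsto e^{-Ct}\xi$ with $C>\sup|f|$) gives $\xi\le 0$ from the parabolic-boundary data. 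The point is that the nonlinear factor $e^{-2u}$ in front of the Laplacian is kept as a given positive coefficient, while all dependence on $u,v$ in the remaining terms is packaged into the zero-order coefficient $f$; your pointwise sign-chasing tries to compare $e^{-2u}\Delta_h u$ with $e^{-2v_\eps}\Delta_h v$ term by term, which cannot be done without sign information on $\Delta_h u$ and $\Delta_h v$ that is simply unavailable.
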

\begin{proof}
Let $\xi=u-v$. By the condition in the lemma, we have
\begin{align}\label{inequn_comparisonCD}
\xi_t\leq e^{-2u}\Delta_h\xi+f(x,t)\xi,
\end{align}
in $\overline{M}\times [0,T)$, where $f(x,t)=-2(1+v_t)$ when $\xi=0$; while $f(x,t)=(1+v_t)\xi^{-1}(e^{-2\xi}-1)$ when $\xi\neq0$. It is easy to check that $f\in C(\overline{M}\times [0,T))$. Since $\xi\leq 0$ on $\partial M\times[0,T) \bigcup \overline{M}\times \{0\}$, by the classical maximum principle of the parabolic inequality $(\ref{inequn_comparisonCD})$, we have $\xi\leq 0$ on $\overline{M}\times [0,T)$ and hence, $u\leq v$ on $\overline{M}\times [0,T)$. This proves the lemma.

\end{proof}

For the convergence of the flow, we first consider a simple case.

\begin{thm}\label{thm_convertibp1-a}
Let $(\overline{M},h)$ be a compact surface with its interior $M$ and boundary $\partial M$ such that $K_h=0$. Assume that $u_0\in C^{4+\alpha}(\overline{M})$ and $\phi\in C^{4+\alpha,2+\frac{\alpha}{2}}(\partial M\times[0,T])$ for all $T>0$, and also the compatibility condition $(\ref{equn_RF3-2-1cpr1})-(\ref{equn_RF3-2-1cpr42})$ holds. Moreover, let $u_0$ be a subsolution to $(\ref{equn_elliptYb})$ such that
\begin{align}\label{inequn_increasingcond1}
e^{-2u_0}[-2\Delta_hu_0(e^{-2u_0}\Delta_{h}u_0-1)+\Delta_h(e^{-2u_0}\Delta_{h}u_0)](x)\geq0,
\end{align}
at the points $x\in \partial M$ where it holds that $e^{-2u_0}\Delta_{h}u_0-1=0$. If $\phi_t\geq 0$ on $\partial M\times[0,+\infty)$, and $\phi(x,t)\geq \log(\xi(t))$ for $(x,t)\in \partial M\times[T_1,+\infty)$ with some constant $T_1>0$, where $\xi$ is a low-speed increasing function satisfying $(\ref{inequn_lowspeedinc})$, then we have that there exists a unique solution $u\in C^{4+\alpha,2+\frac{\alpha}{2}}(\overline{M}\times[0,T])$ for all $T>0$ to the Cauchy-Dirichlet problem $(\ref{equn_RF3-2-1})-(\ref{equn_ibp3-2-1})$ for all $t\geq0$, and moreover, $u$ converges locally uniformly to the Loewner-Nirenberg solution $u_{LN}$ in $C^4$ sense in $M$.
\end{thm}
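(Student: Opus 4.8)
The plan is to sandwich the solution $u$ between a monotone family of subsolutions built from $u_0$ and the Loewner-Nirenberg solution $u_{LN}$ of the geodesic balls, and to use Lemma \ref{lem_comparisonCDP1} together with Theorem \ref{thm_longexistence1} repeatedly. First I would invoke Theorem \ref{thm_longexistence1} to get the long-time existence and the stated regularity of $u$; the compatibility conditions $(\ref{equn_RF3-2-1cpr1})$–$(\ref{equn_RF3-2-1cpr42})$ are assumed, so this is immediate. The real content is the convergence. The strategy splits naturally into a lower barrier and an upper barrier.

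For the \emph{upper} barrier I would argue exactly as in the interior upper bound estimate of Section \ref{section_comparisonthm}: fix any $\bar x\in M$ and a small geodesic ball $U$ around $\bar x$ compactly contained in $M$, let $w_{LN}$ be the Loewner--Nirenberg solution of $\Delta_h w=e^{2w}$ on $U$ blowing up at $\partial U$, and compare $u$ with $w_{LN}$ on $\overline U\times[0,T)$. Since $u$ is finite on the compact set $\partial U\times[0,\infty)$ while $w_{LN}\to+\infty$ at $\partial U$, and since $u_0\le u_0$ trivially, a maximum-principle computation on $\xi(t)=\sup_U(u-w_{LN})$ (which is the computation already displayed before Theorem \ref{thm_comparison2}) gives $(d\xi/dt)_+\le e^{-2\xi}-1$, whence $\limsup_{t\to\infty}\xi(t)\le 0$; letting $U$ exhaust $M$ yields $\limsup_{t\to\infty}u\le u_{LN}$ locally uniformly. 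For the \emph{lower} barrier I would first use the hypothesis that $u_0$ is a subsolution of $(\ref{equn_elliptYb})$ together with $(\ref{inequn_increasingcond1})$ and $\phi_t\ge0$ to show, via Lemma \ref{lem_comparisonCDP1} applied to $u$ and its time-translates $u(\cdot,\cdot+s)$, that $u$ is nondecreasing in $t$: indeed $(\ref{inequn_increasingcond1})$ guarantees $\phi_{tt}(x,0)\ge0$ at boundary points where $\phi_t(x,0)=0$, so $\phi_t\ge0$ is preserved, and $v:=u(\cdot,\cdot+s)$ is then a supersolution lying above $u$ on the parabolic boundary. Hence $u(\cdot,t)\nearrow u_\infty(\cdot)$ pointwise, and $u_\infty\le u_{LN}$ by the upper barrier, so in particular $u_\infty$ is finite in $M$; by interior parabolic estimates (the $C^{4+\alpha}$ bounds from Theorem \ref{thm_longexistence1} on compact subsets, which are uniform in time once $u$ is pinned between $u_0$ and $u_{LN}$ locally) the convergence is in $C^4_{loc}(M)$ and $u_\infty$ solves the stationary equation $\Delta_h u_\infty=e^{2u_\infty}$ in $M$.

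It remains to identify $u_\infty$ with $u_{LN}$, i.e. to prove $u_\infty(p)\to+\infty$ as $p\to\partial M$; this is where the low-speed hypothesis $\phi\ge\log\xi$ with $\xi'\le\tau<1$ enters and is, I expect, the main obstacle. The idea is to build, near $\partial M$, an explicit subsolution of the \emph{parabolic} problem whose value on $\partial M$ is $\log\xi(t)$ and which at each fixed large time is close to the Loewner--Nirenberg profile $-\log r$ in a fixed collar $\{0\le r\le\epsilon\}$; the condition $\xi'\le\tau<1$ is precisely what makes such a time-dependent barrier a subsolution, since the ``$-1$'' on the right side of $(\ref{equn_RF3-2-1})$ must dominate the time derivative $\xi'/\xi$ of the barrier's boundary layer, and one has a margin $1-\tau>0$ to absorb the curvature and lower-order collar terms (recall $K_h=0$, and $r$ is smooth and $|\nabla_h r|_h=1$ on the collar). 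Comparing $u$ from below with this collar barrier via Lemma \ref{lem_comparisonCDP1} — using $\phi\ge\log\xi$ on $\partial M$ and the interior lower bound $u\ge u_0$ on the inner face $\{r=\epsilon\}$ of the collar — forces $u(p,t)\ge(1-o(1))(-\log r(p))$ for $t$ large and $r(p)$ small, hence $u_\infty(p)\to+\infty$ as $p\to\partial M$. By the uniqueness of the Loewner--Nirenberg solution (cited in the introduction via \cite{Bieberbach}\cite{ACF}\cite{LM2}) we conclude $u_\infty=u_{LN}$, and the $C^4_{loc}$ convergence was already established, completing the proof. The construction and subsolution verification of the time-dependent collar barrier, and checking that $\xi'\le\tau<1$ leaves exactly enough room, is the delicate step; everything else is the maximum principle plus standard parabolic regularity.
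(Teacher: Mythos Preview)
Your overall strategy matches the paper's proof: long-time existence via Theorem \ref{thm_longexistence1}, an upper barrier from interior Loewner--Nirenberg solutions, monotonicity in $t$ giving a limit $u_\infty$ that solves the stationary equation, and a time-dependent collar subsolution forcing $u_\infty\to+\infty$ at $\partial M$. The last step is exactly Lemma \ref{lem_lowerboundbdary} in the paper, with explicit barrier $\underline{u}(x,t)=-\log\big(r(x)+\xi(t)^{-1}\big)+w(x)-\varepsilon$, where $w(x)=A\big((r+\delta)^{-p}-\delta^{-p}\big)$; your heuristic about the role of $\xi'\le\tau<1$ is the right one.

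Your monotonicity step has a gap, however. The time-translate comparison via Lemma \ref{lem_comparisonCDP1} needs $u(\cdot,s)\ge u_0$ on $\overline M$ as the initial inequality, which you have not established; your appeal to $(\ref{inequn_increasingcond1})$ is misdirected --- that condition is only a consistency constraint ensuring that $\phi_t\ge0$ and the second-order compatibility $(\ref{equn_RF3-2-1cpr42})$ can hold simultaneously (see the remark immediately following the theorem), and it is not used dynamically in the proof. The fix for your route is one extra line: since $u_0$ is a time-independent subsolution of the parabolic equation (because $\Delta_h u_0\ge e^{2u_0}$) and $u_0=\phi(\cdot,0)\le\phi$ on $\partial M\times[0,\infty)$ by $\phi_t\ge0$, Lemma \ref{lem_comparisonCDP1} first gives $u\ge u_0$ globally; the translate comparison then goes through. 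The paper instead uses the $C^{4+\alpha,2+\frac{\alpha}{2}}$ regularity to differentiate in $t$ directly: $v=u_t$ satisfies $v_t=e^{-2u}\Delta_h v-2(v^2+v)$ with $v=\phi_t\ge0$ on $\partial M$ and $v(\cdot,0)=e^{-2u_0}\Delta_h u_0-1\ge0$, so the maximum principle gives $u_t\ge0$ in one stroke.
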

We remark that the condition $(\ref{inequn_increasingcond1})$ is to guarantee the compatibility condition $(\ref{equn_RF3-2-1cpr42})$ and $\phi_t\geq0$ on $\partial M\times[0,T])$ to hold. When $u_0$ is a solution to $(\ref{equn_elliptYb})$ in a neighborhood of $\partial M$, then $(\ref{inequn_increasingcond1})$ holds automatically; while when $u_0$ is a strict subsolution to $(\ref{equn_elliptYb})$, then the condition $(\ref{inequn_increasingcond1})$ is not needed anymore. For instance, if $u_0$ is a subsolution to $(\ref{equn_elliptYb})$, then $u_0-C$ is a strictly subsolution for any constant $C>0$.

\begin{proof}
The long time existence of the solution $u$ is proved in Theorem \ref{thm_longexistence1}. So we only need to show the convergence.

We first use the maximum principle to show that $u_t\geq0$ on $\overline{M}\times[0,\infty)$. Let $v=u_t$. By $(\ref{equn_RF3-2-1})-(\ref{equn_ibp3-2-1})$, we have that
\begin{align}
&v_t=e^{-2u}\Delta_hv-2(v^2+v),\,\,\text{in}\,\overline{M}\times [0,\infty),\\
&v=\phi_t\geq0,\,\,\text{on}\,\,\partial M\times[0,\infty),\\
&v(x,0)=e^{-2u_0(x)}\Delta_{h}u_0(x)-1\geq0,\,\,\text{for}\,\,x\in \overline{M}.
\end{align}
By the maximum principle, $v$ can never be negative in $\overline{M}\times [0,\infty)$. Therefore, $u_t\geq0$ in $\overline{M}\times [0,\infty)$.

For each smooth compact domain $D \subseteq M$ with its interior $D^\circ$, let $u_{LN,D}$ be the Loewner-Nirenberg solution to $(\ref{equn_elliptYb})$ on $D$. By comparison, we have $u_0\leq u_{LN,D}$ in $D$. Now $\zeta=u-u_{LN,D}$. We have
\begin{align*}
&\zeta_t=e^{-2u}\Delta_h\zeta+(e^{-2\zeta}-1),\,\,\text{in}\,\,D^\circ\times[0,\infty),\\
&\zeta=-\infty,\,\,\text{on}\,\,\partial D\times[0,\infty),\\
&\zeta(x,0)\leq0\,\,\text{for}\,\,x\in\,D^\circ.
\end{align*}
Then, by the maximum principle, we have $\zeta\leq0$ in $D^\circ\times[0,\infty)$. Therefore, on each compact subset in $M$, $u$ is uniformly bounded from above and moreover $u$ is increasing in $t$ for any $x\in M$. By the standard interior estimates of the parabolic equation, we have that on each compact subset $K$ in $M$, $\|u\|_{C^{4+\alpha}}(K)$ is uniformly bounded from above, independent of $t\geq0$. Therefore, $u(x,t)\to u_{\infty}(x)$ locally uniformly in $M$ in $C^4$ sense. Also, by Harnack's inequality of $v$, we have $v(x,t)\to0$ locally uniformly on $M$ as $t\to\infty$ and hence, $u_{\infty}$ is a solution to $(\ref{equn_elliptYb})$ in $M$. Similarly, by comparison, $u(x,t)\leq u_{LN}(x)$ for $x\in M$, where $u_{LN}$ is the Loewner-Nirenberg solution to $(\ref{equn_elliptYb})$ on $M$. For the lower bound estimates of $u$ near $\partial M$, we need the following lemma, which is a modification of Lemma 4.4 in \cite{GLi2}.

\begin{lem}\label{lem_lowerboundbdary}
Let the condition of Theorem \ref{thm_convertibp1-a} holds. Let $r(x)$ be the distance of $x\in M$ to $\partial M$ in $(\overline{M},h)$. Then for any $\varepsilon>0$, there exists $\delta_1>0$ small and $T_2>T_1$, such that
\begin{align}
u(x,t)\geq -\log(r(x)+\epsilon(t))+w(x)-\varepsilon,
\end{align}
for $t\geq T_2$ and any $x\in M$ such that $r(x)\leq \delta_1$, where $\epsilon=\xi(t)^{-1}$, and $w$ is of $C^2$ in the $\delta_1$-neighborhood of $\partial M$ in $(\overline{M},h)$ such that $w(x)\leq0$, with $w=0$ on $\partial M$.
\end{lem}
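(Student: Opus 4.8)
The plan is to construct an explicit subsolution near $\partial M$ and apply the comparison Lemma \ref{lem_comparisonCDP1} on a thin collar, then invoke the lower bound for $\phi$ coming from the low-speed increasing function $\xi$. The model case is the flat half-plane, where $-\log(r+\epsilon(t))$ is a subsolution of $u_t=e^{-2u}\Delta u-1$ exactly when $\epsilon'(t)\le \epsilon(t)$; since $\epsilon=\xi^{-1}$ and $\xi'\le\tau<1$ for $t\ge T$, we get $\epsilon'=-\xi'\xi^{-2}\le 0\le\epsilon$, so this choice gives (asymptotically) a subsolution. First I would work in the collar coordinates $F:\partial M\times[0,\epsilon_0]\to\overline{U_{\epsilon_0}}$ given just before Theorem \ref{thm_comparison2} and write $\Delta_h$ in these coordinates; the metric differs from the product metric by terms that vanish at $\partial M$, so $\Delta_h(-\log(r+\epsilon))$ equals the flat expression plus error terms of size $O((r+\epsilon)^{-1})$ times a bounded factor. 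To absorb these errors I would insert the correction $w(x)$: choosing $w$ of class $C^2$, with $w=0$ and $\partial_r w$ prescribed on $\partial M$ so that the first-order error is killed, and $w\le 0$, one checks that $\underline{u}(x,t):=-\log(r(x)+\epsilon(t))+w(x)-\varepsilon$ satisfies $\underline{u}_t\le e^{-2\underline u}\Delta_h\underline u-1$ in the region $\{r\le\delta_1\}$ for $\delta_1$ small and $t\ge T_2$; here the factor $e^{2\varepsilon}$ coming from the $-\varepsilon$ shift provides the slack needed to dominate the remaining quadratic-in-error terms, and this is where $\delta_1$ and $T_2$ get fixed.

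Next I would check the boundary inequalities needed for Lemma \ref{lem_comparisonCDP1} on the domain $D=\{x: r(x)\le\delta_1\}$ with parabolic boundary $\{r=0\}\cup\{r=\delta_1\}\cup\{t=T_2\}$. On $\{r=0\}=\partial M$: $\underline u=-\log\epsilon(t)+0-\varepsilon=\log\xi(t)-\varepsilon\le\phi(x,t)-\varepsilon\le u(x,t)$ for $t\ge T_1$, using the hypothesis $\phi(x,t)\ge\log\xi(t)$. On $\{r=\delta_1\}$: this is a fixed compact subset of $M$, on which we have already established $u(x,t)\to u_\infty(x)$ (in particular $u$ is bounded below uniformly in $t$) earlier in the proof of Theorem \ref{thm_convertibp1-a}; meanwhile $\underline u(x,t)=-\log(\delta_1+\epsilon(t))+w(x)-\varepsilon$ stays bounded above, so after possibly enlarging $T_2$ we get $\underline u\le u$ there. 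On $\{t=T_2\}$: again $u(\cdot,T_2)$ is continuous on the compact collar and $\underline u(\cdot,T_2)$ is bounded, so choosing the constant correction appropriately (or shrinking $\delta_1$) makes the initial inequality hold; alternatively one simply observes $u$ is increasing in $t$ and already above a fixed lower barrier. With all three pieces of the parabolic boundary handled, Lemma \ref{lem_comparisonCDP1} yields $\underline u\le u$ throughout $\{r\le\delta_1\}\times[T_2,\infty)$, which is exactly the claimed estimate.

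The main obstacle I anticipate is the subsolution verification in curved collar coordinates: one must control the error between $\Delta_h$ and the flat Laplacian acting on the singular function $-\log(r+\epsilon)$, and show that the discrepancy is genuinely dominated by the gain from the $-\varepsilon$ shift together with the smallness of $\delta_1$ and the decay $\epsilon'\le 0$. Concretely, writing $\Delta_h f = \partial_r^2 f + a(x)\partial_r f + (\text{tangential part})$ with $a$ bounded, one has $\partial_r^2(-\log(r+\epsilon)) = (r+\epsilon)^{-2}$ and $\partial_r(-\log(r+\epsilon))=-(r+\epsilon)^{-1}$, and the tangential derivatives of $-\log(r+\epsilon)$ vanish, so the dangerous terms are $(r+\epsilon)^{-2}$ (good, it is what the flat computation uses) against $|a|(r+\epsilon)^{-1}$ and the contributions of $w$; multiplying by $e^{-2\underline u}\sim e^{2\varepsilon}(r+\epsilon)^2 e^{-2w}$ turns the leading term into $e^{2\varepsilon}e^{-2w}$, the $a$-term into $O(r+\epsilon)$, and one needs $e^{2\varepsilon}e^{-2w}-1\ge \epsilon'(r+\epsilon)^{-1}\cdot(\text{stuff}) + O(r+\epsilon)$, which holds once $r+\epsilon$ is small (hence $\delta_1$ small and $T_2$ large) since the left side is bounded below by a positive constant. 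This is a routine but somewhat delicate computation of exactly the type carried out in Lemma 4.4 of \cite{GLi2}, and I would follow that argument, modifying it only to account for the normalization term $-1$ in the present flow and for the explicit form $\epsilon=\xi^{-1}$.
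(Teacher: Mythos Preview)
Your overall strategy---build a barrier $\underline{u}=-\log(r+\epsilon(t))+w(x)-\varepsilon$ on a collar and apply Lemma \ref{lem_comparisonCDP1}---is exactly what the paper does, and your treatment of the subsolution inequality and of the piece $\{r=0\}$ of the parabolic boundary is fine. The gap is in your choice of $w$ and, consequently, in the verification on the inner wall $\Sigma_{\delta_1}=\{r=\delta_1\}$ and on the initial slice $\{t=T_2\}$.

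You propose to fix $w$ by prescribing $w=0$ and $\partial_r w$ on $\partial M$ so as to cancel the first-order curvature error. That makes $w(\delta_1)=O(\delta_1)$, hence on $\Sigma_{\delta_1}$ one has $\underline{u}\approx -\log(\delta_1)-\varepsilon$, which blows up as $\delta_1\to 0$. But at this point in the argument the only lower bound available for $u$ on $\Sigma_{\delta_1}$ is $u\ge u_0$ (from $u_t\ge 0$), and $u_0$ is bounded; the convergence $u\to u_\infty$ gives nothing better, since we do not yet know $u_\infty$ blows up at $\partial M$---that is precisely what the lemma is meant to establish. So ``enlarging $T_2$'' cannot force $\underline{u}\le u$ on $\Sigma_{\delta_1}$, and ``shrinking $\delta_1$'' makes it worse. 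The same obstruction hits the initial slice $\{t=T_2\}$ near $r=\delta_1$.

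The paper's fix is to take $w(x)=A\big((r(x)+\delta)^{-p}-\delta^{-p}\big)$ with $\delta>0$ small fixed and $A,p>1$ large free parameters. This $w$ vanishes on $\partial M$, is $\le 0$, and---crucially---can be made as negative as one likes at $r=\delta_1$ by taking $A$ large, so that $-\log(\delta_1)+w(\delta_1)<\inf_{\Sigma_{\delta_1}}u_0$. The same freedom in $A,p$ handles $\{t=T_2\}$. The first-order curvature term $a(x)\partial_r(-\log(r+\epsilon))$ is not cancelled by $w$ at all; after multiplying by $e^{-2\underline{u}}$ it becomes $O(r+\epsilon)$ and is absorbed by the gain $e^{2\varepsilon}-1>0$ once $\delta_1$ is small and $T_2$ is large. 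In short, $w$ is there to push the barrier below $u_0$ on the inner and initial boundaries, not to correct the PDE; once you adopt this $w$, your outline goes through.
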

\begin{proof}
For $q\in \partial M$ and $\delta_1>0$, we denote Exp$_q(r)$ the point on the geodesic starting from $q$ in the direction of the inner normal vector with distance $r$ to $q$ in $(\overline{M},h)$. Define the map $F:\, \partial M\times [0,\delta_1]\to \overline{M}$ such that $F(q,r)=$ Exp$_q(r)$. Then for $\delta_1>0$ sufficiently small, $F$ is a diffeomorphism to the image $U_{\delta_1}=\,\{\text{Exp}_q(r)\big|(q,r)\in\partial M\times [0,\delta_1]\}$. Then on $U_{\delta_1}$ the metric has the orthogonal decomposition
\begin{align*}
h=dr^2+h_r=dr^2+f(r,s)ds^2,
\end{align*}
with $h_r$ the restriction of $h$ on $\Sigma_r=\{p\in M\big| r(p)=r\}$. Now we define the function
\begin{align*}
\underline{u}(x,t)=-\log(r(x)+\epsilon(t))+w(x)-\varepsilon,
\end{align*}
for $(x,t)\in U_{\delta_1}\times[T,+\infty)$, where
\begin{align*}
w(x)=A(\frac{1}{(r(x)+\delta)^p}-\frac{1}{\delta^p})\leq0,
\end{align*}
with constants $A>0$, $p>1$ large and $\delta>0$ small to be defined. Recall that
\begin{align}
-\frac{\epsilon'(t)}{\epsilon(t)^2}=\xi'(t)\leq \tau,
\end{align}
for $t\geq T$. Let $\tilde{r}(x,t)=r(x)+\epsilon(t)$. Then we have
\begin{align*}
\underline{u}_t&=\frac{-\epsilon'(t)}{\tilde{r}(x,t)}\leq \frac{\tau \epsilon(t)^2}{\tilde{r}(x,t)}\leq \tau \epsilon(t),\\
e^{-2\underline{u}}\Delta_h \underline{u}-1&=e^{2\varepsilon-2w(x)} \tilde{r}(x,t)^2[\frac{\partial^2}{\partial r^2}\underline{u}+\frac{1}{2}f^{-1}\frac{\partial f}{\partial r}\frac{\partial}{\partial r}\underline{u}]-1\\
&=e^{2\varepsilon-2w(x)} \tilde{r}(x,t)^2[\frac{1}{\tilde{r}(x,t)^2}+Ap(p+1)(r(x)+\delta)^{-p-2}-\frac{1}{2}f^{-1}\frac{\partial f}{\partial r}\tilde{r}(x,t)^{-1}\\
&\,\,\,\,\,\,\,\,\,\,\,\,\,\,\,\,\,\,\,\,\,\,\,\,\,\,\,\,\,\,\,\,\,\,\,\,\,\,\,\,\,\,-\frac{Ap}{2}f^{-1}\frac{\partial f}{\partial r}(r(x)+\delta)^{-p-1}]-1\\
&=e^{2\varepsilon-2w(x)}[1+\frac{Ap(p+1)\tilde{r}(x,t)^2}{(r(x)+\delta)^{p+2}}-\frac{1}{2}f^{-1}\frac{\partial f}{\partial r}\tilde{r}(x,t)\\
&\,\,\,\,\,\,\,\,\,\,\,\,\,\,\,\,\,\,\,\,\,\,\,\,\,\,\,\,\,\,\,\,\,\,\,\,\,\,\,\,\,\,-\frac{Ap}{2}f^{-1}\frac{\partial f}{\partial r}(r(x)+\delta)^{-p-1}\tilde{r}(x,t)^2]-1\\
&=e^{2\varepsilon-2w(x)}[1+\frac{Ap\tilde{r}(x,t)^2}{(r(x)+\delta)^{p+2}}(p+1-\frac{(r(x)+\delta)\frac{\partial f}{\partial r}}{2f})-\frac{1}{2}f^{-1}\frac{\partial f}{\partial r}\tilde{r}(x,t)]-1,
\end{align*}
Now we fix $\delta>0$ small. Therefore, for $\delta_1>0$ sufficiently small,  $T_2>\max\{T_1,T\}$ sufficiently large and $p>0$ large, we have that
\begin{align*}
e^{-2\underline{u}}\Delta_h \underline{u}-1\geq e^{2\varepsilon-2w(x)}[1-\frac{\varepsilon}{2}]-1>\varepsilon>\underline{u}_t,
\end{align*}
and hence, $\underline{u}$ is a subsolution of the equation $(\ref{equn_RF3-2-1})$ for $(x,t)\in U_{\delta_1}\times[T_2,\infty)$. By the boundary condition, we have $u\geq \underline{u}$ on $\partial M \times [T_2,\infty)$. Notice that $\partial U_{\delta_1}=\partial M \bigcup \Sigma_{\delta_1}$. Since $u(x,t)$ is increasing in $t$, we have that $u(x,t)\geq u_0(x)$. On the other hand, there exists $A_1>0$ such that for each $A\geq A_1$ and $p>1$, we have that
\begin{align*}
-\log(\delta_1)+A[(\delta_1+\delta)^{-p}-\delta^{-p}]<\displaystyle\inf_{\Sigma_{\delta_1}}u_0,
\end{align*}
and hence we have on $\Sigma_{\delta_1}\times[0,\infty)$,
\begin{align*}
\underline{u}\leq u.
\end{align*}
On $U_{\delta_1}\times\{T_2\}$, since $\underline{u}(\cdot,T_2),\,u(\cdot,T_2)\in C^1(U_{\delta_1})$ and $\underline{u}\leq u$ on $\partial M\times\{T_2\}$, there exist $A_2>0$ and $p_2>0$ such that for $A\geq A_2$ and $p\geq p_2$, we have that on $U_{\delta_1}\times\{T_2\}$,
\begin{align*}
\underline{u}\leq u.
\end{align*}
Now we apply Lemma \ref{lem_comparisonCDP1} on $U_{\delta_1}\times[T_2,\infty)$ and obtain that
\begin{align*}
u\geq \underline{u},
\end{align*}
on $U_{\delta_1}\times[T_2,\infty)$, for some $A>1$ and $p>1$ large. This completes the proof of Lemma \ref{lem_lowerboundbdary}.

\end{proof}

 By Lemma \ref{lem_lowerboundbdary}, we have that
 \begin{align}
 u_{\infty}(x)\geq -\log(r(x))+w(x)-\varepsilon,
 \end{align}
 in a neighborhood of $\partial M$ relating to $\varepsilon$ and hence, $u_{\infty}$ is the Loewner-Nirenberg solution to $(\ref{equn_elliptYb})$.

\end{proof}
Now we consider the convergence of the flow for more general initial and boundary data.
\begin{thm}\label{thm_Flow1convergence}
Let $(\overline{M},h)$ be a compact surface with its interior $M$ and boundary $\partial M$ such that $K_h=0$. Assume that $u_0\in C^{2+\alpha}(\overline{M})$ and $\phi\in C^{2+\alpha,1+\frac{\alpha}{2}}(\partial M\times[0,\tau])$ for all $\tau>0$, and also the compatibility condition $(\ref{equn_RF3-2-1cpr1})-(\ref{equn_RF3-2-1cpr2})$ holds. Moreover, we assume that $\phi_t(x,t)\geq0$ on $\partial M\times[T,\infty)$ for some $T>0$, and $\phi(x,t)\geq \log(\xi(t))$ for $(x,t)\in \partial M\times[T_1,+\infty)$ with some constant $T_1>0$ where $\xi$ is a low-speed increasing function satisfying $(\ref{inequn_lowspeedinc})$. Then there exists a unique solution $u\in C^{2+\alpha,1+\frac{\alpha}{2}}(\overline{M}\times[0,T])$ for all $T>0$ to the Cauchy-Dirichlet boundary value problem $(\ref{equn_RF3-2-1})-(\ref{equn_ibp3-2-1})$ for all $t\geq0$, and $u$ converges locally uniformly to the Loewner-Nirenberg solution $u_{LN}$ in $C^2$ sense in $M$.
\end{thm}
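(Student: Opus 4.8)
The plan is to derive Theorem~\ref{thm_Flow1convergence} from Theorem~\ref{thm_convertibp1-a} by trapping the solution $u$ between a decaying interior supersolution assembled from local Loewner--Nirenberg solutions and a subsolution which is itself the solution of an auxiliary Cauchy--Dirichlet problem of the type already treated in Theorem~\ref{thm_convertibp1-a}. Long time existence and the regularity $u\in C^{2+\alpha,1+\frac{\alpha}{2}}(\overline M\times[0,T])$ for all $T>0$ are provided by Theorem~\ref{thm_longexistence1}, so only the convergence $u(\cdot,t)\to u_{LN}$ on compact subsets of $M$, together with its upgrade to $C^{2}$, remains to be proved.

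\emph{Upper estimate.} For a smooth compact domain $D\Subset M$ let $u_{LN,D}$ be the Loewner--Nirenberg solution of $(\ref{equn_elliptYb})$ on $D$. Since $u_{LN,D}$ is continuous on $\overline D$ and $u_0$ is bounded, $\zeta:=u-u_{LN,D}$ satisfies $\zeta(\cdot,0)\le C_D:=\sup_D(u_0-u_{LN,D})<\infty$, $\zeta\to-\infty$ on $\partial D\times[0,\infty)$, and $\zeta_t=e^{-2u}\Delta_h\zeta+(e^{-2\zeta}-1)$ in the interior of $D$, exactly as in the proof of Theorem~\ref{thm_convertibp1-a}. Comparing $\zeta$ with the spatially constant supersolution $\eta(t)$ solving $\eta'=e^{-2\eta}-1$, $\eta(0)=C_D$, the maximum principle gives $u\le u_{LN,D}+\eta(t)$ on $D\times[0,\infty)$, and since $\eta(t)\to 0$ (decreasing to $0$ if $C_D>0$, and $\eta\le 0$ otherwise) this yields $\limsup_{t\to\infty}u\le u_{LN,D}$ uniformly on $D$. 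Letting $D$ run through an exhaustion $D_k\uparrow M$ and using $u_{LN,D_k}\downarrow u_{LN}$ gives $\limsup_{t\to\infty}u\le u_{LN}$ locally uniformly in $M$; note this step uses only the boundedness of $u_0$, not that $u_0$ is a subsolution.

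\emph{Lower estimate.} This is the point at which the hypothesis that $u_0$ be a subsolution is removed, and I expect it to be the heart of the argument. Choose a constant $C>0$ so large that $-C\le\min_{\overline M}u_0$ and $-C\le\inf_{\partial M\times[0,\infty)}\phi$, the latter infimum being finite because $\phi$ is continuous and $\phi\ge\log\xi(t)\to\infty$. Let $\underline u_0\in C^{4+\alpha}(\overline M)$ solve $\Delta_h\underline u_0=e^{2\underline u_0}$ on $\overline M$ with $\underline u_0=-C$ on $\partial M$; being subharmonic, $\underline u_0\le -C\le u_0$ on $\overline M$. Because $\underline u_0$ solves $(\ref{equn_elliptYb})$ everywhere, condition $(\ref{inequn_increasingcond1})$ holds trivially and the compatibility conditions $(\ref{equn_RF3-2-1cpr1})$, $(\ref{equn_RF3-2-1cpr2})$, $(\ref{equn_RF3-2-1cpr42})$ reduce to $\underline\phi(\cdot,0)=-C$ and $\underline\phi_t(\cdot,0)=\underline\phi_{tt}(\cdot,0)=0$ on $\partial M$. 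Then take $\underline\phi(x,t)=-C+g(t)$ with $g\in C^\infty([0,\infty))$, $g\equiv 0$ near $t=0$, $g'\ge 0$, $g(t)=C+\log\xi(t)$ for $t$ large, and with the interpolation on the intermediate interval chosen so that $-C+g(t)\le\phi(x,t)$ for all $(x,t)$; this is possible since $\phi\ge -C$ everywhere and $\phi\ge\log\xi$ eventually. The pair $(\underline u_0,\underline\phi)$ satisfies every hypothesis of Theorem~\ref{thm_convertibp1-a} with the same low-speed increasing function $\xi$ as in $(\ref{inequn_lowspeedinc})$, so the corresponding solution $\underline u$ of $(\ref{equn_RF3-2-1})$--$(\ref{equn_ibp3-2-1})$ exists for all time, is nondecreasing in $t$, and converges to $u_{LN}$ locally uniformly in $C^4$ on $M$. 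Since $\underline u_0\le u_0$ on $\overline M$ and $\underline\phi\le\phi$ on $\partial M\times[0,\infty)$, the comparison Lemma~\ref{lem_comparisonCDP1} (applied on $[0,T)$ for each $T$) gives $\underline u\le u$ on $\overline M\times[0,\infty)$, whence $\liminf_{t\to\infty}u\ge u_{LN}$ locally uniformly in $M$.

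\emph{Conclusion and main obstacle.} The two estimates force $u(\cdot,t)\to u_{LN}$ locally uniformly in $M$. Moreover, by the upper estimate $u$ is locally uniformly bounded from above on $M\times[0,\infty)$, while $\underline u\le u$ and $\underline u\ge\underline u_0\ge\min_{\overline M}\underline u_0$ bound $u$ from below, so interior parabolic Schauder estimates for the quasilinear equation $(\ref{equn_RF3-2-1})$ bound $\|u(\cdot,t)\|_{C^{2+\alpha}}$ on every compact subset of $M$ uniformly in $t$; hence $\{u(\cdot,t)\}$ is precompact in $C^2$ on such subsets and the $C^0$ convergence improves to $C^2$ convergence to $u_{LN}$. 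The genuinely delicate step is the construction of the auxiliary pair $(\underline u_0,\underline\phi)$: one must simultaneously keep it below $(u_0,\phi)$, preserve the compatibility conditions $(\ref{equn_RF3-2-1cpr1})$--$(\ref{equn_RF3-2-1cpr42})$, keep $\underline\phi$ nondecreasing, and keep $\underline\phi$ above a low-speed increasing function; choosing $\underline u_0$ to be an exact solution of $(\ref{equn_elliptYb})$ with a sufficiently negative constant boundary value is precisely what makes $\underline\phi_t(\cdot,0)=\underline\phi_{tt}(\cdot,0)=0$ and thereby renders this bookkeeping routine.
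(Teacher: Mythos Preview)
Your argument is correct and follows the same scheme as the paper: long-time existence from Theorem~\ref{thm_longexistence1}, an upper barrier from Loewner--Nirenberg solutions via the interior maximum principle, and a lower barrier produced by solving an auxiliary Cauchy--Dirichlet problem whose initial datum is an exact solution of $(\ref{equn_elliptYb})$ so that Theorem~\ref{thm_convertibp1-a} applies, followed by Lemma~\ref{lem_comparisonCDP1}.

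The implementations differ only cosmetically. For the upper bound the paper compares directly with $u_{LN}$ on $M$ (the supremum of $u-u_{LN}$ is automatically interior because $u_{LN}=+\infty$ on $\partial M$), which spares your exhaustion by subdomains $D_k$. For the lower bound the paper builds $\phi_0(x,t)$ within distance $2$ of $\phi$ via a rather involved $C^5$ gluing (Appendix~\ref{Appendix1}); your spatially constant $\underline\phi(t)=-C+g(t)$ is cleaner, and the fact that it may lie far below $\phi$ is harmless for the comparison.

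One small technicality: your prescription $g(t)=C+\log\xi(t)$ for large $t$ gives $g'=\xi'/\xi$, but the definition of low-speed increasing only asks $\xi'\le\tau$, not $\xi'\ge 0$, so $g'\ge 0$ could fail. The fix is immediate: since $m(t):=C+\min_{\partial M}\phi(\cdot,t)$ is nondecreasing for $t\ge T$ (from $\phi_t\ge 0$) and tends to $+\infty$, choose any smooth nondecreasing $g$ with $g\equiv 0$ near $0$, $g\le m$, and $g\to\infty$ slowly enough that $(e^{g-C})'=g'e^{g-C}\le\tfrac12$ eventually; then $\tilde\xi:=e^{g-C}$ is itself low-speed increasing and $\underline\phi\ge\log\tilde\xi$.
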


\begin{proof}
The long time existence of the solution $u$ is proved in Theorem \ref{thm_longexistence1}. So we only need to show the convergence.

Let $\beta=\displaystyle\inf_{M\times[0,T]}u$. Let $v_0$ be the solution to $(\ref{equn_elliptYb})$ with the boundary condition $v_0\big|_{\partial M}=\beta-3$. Then by the maximum principle, $v_0\leq \beta-3$ on $\overline{M}$. We then pick up a function $\phi_0(x,t)\in C^{4+\alpha,2+\frac{\alpha}{2}}(\overline{M}\times[0,t_1])$ for all $t_1>0$ such that $v_0$ and $\phi_0$ satisfy the compatibility condition $(\ref{equn_RF3-2-1cpr1})-(\ref{equn_RF3-2-1cpr42})$, $\phi_0\leq \phi$ on $\partial M\times[0,2T]$, $\phi-2<\phi_0< \phi$ on $\partial M\times[2T,\infty)$, and $\frac{\partial\phi_0}{\partial t}(x,t)\geq 0$ for $t\geq0$. Indeed, this can be done by a gluing construction: Using the density of $C^5$ function in $C^{2,1}(\partial M\times[T,\infty))$, we first choose $\tilde{\phi}\in C^5(\partial M\times[T,\infty))$ sufficiently close to $\phi-1+\frac{1}{100}(1-e^{-t})$ in $C^{2,1}(\partial M\times[T,\infty))$ so that
\begin{align*}
\phi-2<\tilde{\phi}<\phi,
 \end{align*}
 and $\frac{\partial}{\partial t}\tilde{\phi}>0$  on $\partial M\times[T,\infty)$ (see the appendix), and then we let $\eta=\eta(t)\in C^{\infty}([0,\infty))$ such that $\eta(t)=0$ for $t\leq T$, $\eta(t)=1$ for $t\geq 2T$, and $\eta'(t)\geq 0$ for all $t\geq0$, and we define \begin{align*}
 \phi_0(x,t)=\tilde{\phi}(x,t) \eta(t)+ (\beta-3) (1-\eta(t)).
  \end{align*}
  for $(x,t)\in \partial M\times [0,\infty)$. Thus, $\phi_0(x,t)\geq \log(\frac{\xi(t)}{2})$ for $t\geq \max\{2T,T_1\}$. Notice that $\frac{\xi}{2}$ is also a low-speed increasing function satisfying $(\ref{inequn_lowspeedinc})$.

By Theorem \ref{thm_convertibp1-a}, there exists a unique solution $v\in C^{4+\alpha,2+\frac{\alpha}{2}}(\overline{M}\times[0,t])$ for all $t\geq0$ to the following Cauchy-Dirichlet problem
\begin{align*}
&v_t=e^{-2v}\Delta_{h}v-1,\,\text{in}\,\overline{M}\times [0,\infty)\,\\
&v=\phi_0(\cdot,t),\,\,\text{on}\,\partial M \times [0,\infty),\\
&u\big|_{t=0}=v_0,\,\,\text{in}\,M,
\end{align*}
and $v(x,t)$ converges locally uniformly in $C^4$ in $M$ to the Loewner-Nirenberg solution $u_{LN}$ to $(\ref{equn_elliptYb})$ in $M$, as $t\to\infty$. Since $v\leq u$ on $\overline{M}\times\{0\} \bigcup \partial M\times[0,\infty)$, by Lemma \ref{lem_comparisonCDP1}, we have that
\begin{align}
u(x,t)\geq v(x,t)
\end{align}
for $(x,t)\in \overline{M}\times[0,\infty)$.

For the upper bound estimates of $u$, we will use the same argument above Theorem \ref{thm_comparison2}. Indeed, let $u_{LN}$ be the Loewner-Nirenberg solution to $(\ref{equn_elliptYb})$ in $M$. Let $\xi(t)=\displaystyle\sup_{x\in M}(u(x,t)-u_{LN}(x))$ for $t\geq0$. Replacing $U$ by $M$, by the discussion above Theorem \ref{thm_comparison2}, we have that $\displaystyle\limsup_{t\to\infty}\xi(t)\leq0$ and hence,
\begin{align*}
\limsup_{t\to\infty}u(x,t)\leq u_{LN}(x),
\end{align*}
for any $x\in M$. Combining the upper bound estimates, the above lower bound estimates and the convergence of $v$, we have that $u(x,t)\to u_{LN}(x)$ locally uniformly on $M$ as $t\to\infty$. Then by the standard interior estimates of the parabolic equations, we have that $u(x,t)\to u_{LN}(x)$ locally uniformly in $C^2$ sense in $M$ as $t\to\infty$.

\end{proof}
As a corollary, we now prove Theorem \ref{thm_CDPconvergnl}.
\begin{proof}[Proof of Theorem \ref{thm_CDPconvergnl}]
By the discussion at the beginning of Section \ref{section_comparisonthm}, there exists a conformal metric $h=e^{2u_1}g$ such that $K_h=0$ on $\overline{M}$. Let $v=u-u_1$. Then $v$ is a solution to $(\ref{equn_RF3-2-1})-(\ref{equn_ibp3-2-1})$ with the new initial data $v_0=u_0-u_1$ on $\overline{M}$ and boundary data $\tilde{\phi}=\phi-u_1$. Then, by Theorem \ref{thm_Flow1convergence}, we complete the proof of the Theorem.
\end{proof}

\begin{Remark}
The condition $\phi_t\geq 0$ for $t\geq 0$ for the boundary data $\phi$ is too restrictive in Theorem 1.1 in \cite{GLi}, considering the compatible condition (2.1) there. Similar as Theorem \ref{thm_convertibp1-a} here, it can be relaxed to $\phi_t\geq 0$ for $t\geq t_1$ with some constant $t_1>0$, with minor changes in the proof. So now Theorem 1.1 in \cite{GLi} is refined to the following statement:
\end{Remark}
\begin{thm}\label{mainthm2-1}
Let $(M^n, g)$  be a smooth compact manifold with boundary such that $R_g\geq0$. Assume two positive functions $u_0\in C^{2,\alpha}(M)$ and $\phi\in C_{loc}^{2,\alpha}(\partial M \times [0,+\infty))$ satisfy the compatible condition $(2.1)$ on $\partial M\times\{0\}$.  Moreover, assume $\phi_t\geq0$ for $t\geq t_1$ with some constant $t_1>0$, $\phi$ satisfies $(2.4)$ as $t\to\infty$ and $\displaystyle\lim_{t\to\infty}\inf_{\partial M}\phi=\infty$. Then there exists a unique solution $u$ to the Cauchy-Dirichlet problem $(1.3)-(1.5)$, converging in $C_{loc}^2(M^{\circ})$ to a solution $u_{\infty}$ to the Loewner-Nirenberg problem $(1.1)-(1.2)$ as $t\to +\infty$. Moreover, there exists a constant $C>0$ such that
\begin{align*}
\frac{1}{C}(x+(\inf_{p\in \partial M}\phi(p))^{\frac{2}{2-n}})^{\frac{2-n}{2}}-C\leq u\leq Cx^{\frac{2-n}{2}},
\end{align*}
near the boundary $\partial M$, where $x$ is the distance function to the boundary.
\end{thm}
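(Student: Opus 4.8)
The plan is to transplant the proof of Theorem~1.1 of \cite{GLi} to the present hypotheses, dealing with the Dirichlet datum exactly as in the proof of Theorem~\ref{thm_Flow1convergence} above, so that neither a subsolution hypothesis on $u_0$ nor global monotonicity of $\phi$ is invoked. Long-time existence of a solution $u\in C^{2,\alpha}_{loc}(M\times[0,\infty))$ is not affected by the relaxation: short-time existence is the inverse function theorem together with the compatibility condition $(2.1)$, and the maximum principle gives uniform bounds on $u$ on every finite time interval --- the lower bound coming, as in the proof of Theorem~\ref{thm_longexistence1}, from a differential inequality for $\inf_M u(\cdot,t)$ along the spatial minimum points, controlled by $\inf_{\partial M}\phi_t$ and a fixed constant --- so the solution extends to all $t\ge 0$; uniqueness is the $n$-dimensional version of the comparison Lemma~\ref{lem_comparisonCDP1}.

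First I would construct a lower barrier. Fix $T>t_1$ and put $\beta=\inf_{M\times[0,T]}u>0$. Let $v_0$ be the solution of the Dirichlet problem for the stationary equation $(1.1)$ on $M$ with boundary value a positive constant $\le\beta$, so that $v_0$ is a stationary subsolution of the flow and $v_0\le u$ on $M\times\{0\}$; the compatibility condition then forces the matching time-derivative data of any admissible Dirichlet function for $v_0$ to vanish at $t=0$. By the gluing construction used in the proof of Theorem~\ref{thm_Flow1convergence} --- density of smooth functions in $C^{2,1}(\partial M\times[t_1,\infty))$, a monotone time cutoff $\eta$ vanishing for $t\le t_1$ and equal to $1$ for $t\ge 2t_1$, applied to $\phi$ perturbed down by a constant and augmented by a small strictly increasing term --- one produces $\phi_0\in C^{2,\alpha}_{loc}(\partial M\times[0,\infty))$ with $\phi_0\le\phi$ everywhere, $(\phi_0)_t\ge 0$ for all $t\ge 0$, the pair $(v_0,\phi_0)$ satisfying $(2.1)$, and $\phi_0$ still obeying $(2.4)$ with $\inf_{\partial M}\phi_0\to\infty$. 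Since the initial datum $v_0$ is now a subsolution and the boundary datum is monotone for all time, Theorem~1.1 of \cite{GLi} in its original form (equivalently, the $n$-dimensional analogue of Theorem~\ref{thm_convertibp1-a}) applies to the Cauchy--Dirichlet problem with data $(v_0,\phi_0)$ and produces a solution $v$ converging in $C^2_{loc}(M^\circ)$ to the Loewner--Nirenberg solution $u_{LN}$ of $(1.1)-(1.2)$ on $M$. As $v\le u$ on $\partial M\times[0,\infty)\cup M\times\{0\}$, the comparison lemma gives $u\ge v$ on $M\times[0,\infty)$, hence $\liminf_{t\to\infty}u(x,t)\ge u_{LN}(x)$ on $M^\circ$.

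For the upper bound I would argue as in the paragraph preceding Theorem~\ref{thm_comparison2}, now in dimension $n$: for $\bar x\in M^\circ$ choose a geodesic ball $U$ with $\overline U\subset M^\circ$, let $w_U$ be the Loewner--Nirenberg solution of $(1.1)$ on $U$, and observe that since $w_U\to+\infty$ on $\partial U$ while $u$ is finite and smooth on $\overline U$, the quantity $\xi(t)=\sup_U(u(\cdot,t)-w_U)$ is attained at interior points, where $u-w_U$ satisfies a differential inequality of the form $\tfrac{d}{dt}(u-w_U)\le\Phi(\xi)$ with $\Phi(0)=0$ and $\Phi$ decreasing near $0$; the ODE comparison then yields $\limsup_{t\to\infty}\xi(t)\le 0$, hence $\limsup_{t\to\infty}u(x,t)\le w_U(x)$, and exhausting $M^\circ$ by such balls, $\limsup_{t\to\infty}u(x,t)\le u_{LN}(x)$ on $M^\circ$. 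Squeezing $v\le u$ against this upper bound, together with $v\to u_{LN}$, gives $u(x,t)\to u_{LN}(x)$ pointwise on $M^\circ$; the interior parabolic Schauder estimates upgrade this to locally uniform convergence in $C^2(M^\circ)$, and the limit $u_\infty=u_{LN}$ solves the Loewner--Nirenberg problem $(1.1)-(1.2)$ by definition.

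It remains to prove the two-sided estimate near $\partial M$. The upper bound $u\le Cx^{\frac{2-n}{2}}$ is $t$-independent and follows by comparing, at each point of a collar of $\partial M$, with the Loewner--Nirenberg solution of $(1.1)$ on a fixed interior geodesic ball internally tangent to $\partial M$. For the lower bound I would transplant the barrier of Lemma~\ref{lem_lowerboundbdary} to dimension $n$: on a collar $\{x\le\delta_1\}\times[T_2,\infty)$ set $\underline u=c\,(x+\epsilon(t))^{\frac{2-n}{2}}(1+w(x))-C$ with $\epsilon(t)$ of size $(\inf_{\partial M}\phi(\cdot,t))^{\frac{2}{2-n}}$ and $w$ a small negative correction of the type in Lemma~\ref{lem_lowerboundbdary}; since $\tfrac{2-n}{2}$ is precisely the exponent for which $c\,x^{\frac{2-n}{2}}$ solves the leading-order equation, one checks that $\underline u$ is a subsolution of the flow for $t$ large --- here the decay of $\epsilon_t$ is controlled by $(2.4)$ --- that $\underline u\le u$ on the parabolic boundary of the collar (on $\{x=0\}$ because $u=\phi$ with $\inf_{\partial M}\phi\to\infty$, on $\{x=\delta_1\}$ by the lower bound already proved, and at $t=T_2$ for $C$ large), and then concludes $u\ge\underline u$ by the comparison lemma, which is the asserted lower bound; letting $t\to\infty$ also recovers $u_\infty\ge\tfrac1C x^{\frac{2-n}{2}}-C$, consistent with $u_\infty=u_{LN}$. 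The one genuinely new point over \cite{GLi} is the removal of the global monotonicity of $\phi$, for which the only thing to check is that the gluing construction can simultaneously deliver a $\phi_0$ that lies below $\phi$, is monotone for every $t\ge 0$, is compatible via $(2.1)$ with a bona fide stationary subsolution $v_0$, and still diverges at a rate admissible for $(2.4)$; beyond that, the main obstacle is the $n$-dimensional collar subsolution of Lemma~\ref{lem_lowerboundbdary}, where the interplay between the blow-up exponent $\tfrac{2-n}{2}$ and the growth condition $(2.4)$ must be tracked with care.
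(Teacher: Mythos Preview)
Your plan is sound and would work, but it is considerably more elaborate than what the paper actually does. The paper's entire ``proof'' is a one-line remark: the argument of Theorem~1.1 in \cite{GLi} goes through verbatim once a single constant is changed, namely one takes the small constant $\epsilon<\frac{1}{10n}\inf_{\partial M\times[0,t_1]}\phi$ instead of the original choice. No gluing, no auxiliary barrier solution $v$, no reconstruction of the lower bound argument is carried out.

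The difference is one of economy. You propose to handle the non-monotone initial segment $[0,t_1]$ by building, via the gluing construction of Theorem~\ref{thm_Flow1convergence}, a globally monotone Dirichlet datum $\phi_0\le\phi$ compatible with a stationary subsolution $v_0$, then invoking the original \cite{GLi} theorem on the pair $(v_0,\phi_0)$ and comparing. This is exactly the $n$-dimensional analogue of how Theorem~\ref{thm_Flow1convergence} is derived from Theorem~\ref{thm_convertibp1-a} in the present paper, so it is a natural and correct route. The paper, by contrast, observes that in the original proof of \cite{GLi} the monotonicity $\phi_t\ge 0$ was only used through a choice of $\epsilon$ that can simply be made small relative to $\inf_{\partial M\times[0,t_1]}\phi$; with that adjustment the original barrier already sits below $u$ on the whole time axis, and nothing else needs to change. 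Your approach is self-contained and mirrors the $2$-D development in this paper; the paper's approach is quicker but requires the reader to have the proof of \cite{GLi} open and to locate the relevant $\epsilon$.
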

In comparison to the proof of Theorem 1.1 in \cite{GLi}, here for Theorem \ref{mainthm2-1}, we choose the small constant $\epsilon< \frac{1}{10n}\displaystyle\inf_{\partial M\times[0,t_1]}\phi$ instead. That is the only change in the proof.

For the same reason, the condition $\phi_t\geq 0$ for $t\geq0$ in Lemma 2.9 in \cite{GLi} is relaxed to $\phi_t\geq 0$ for $t\geq t_1$ with some constant $t_1>0$, and hence Lemma 2.9 in \cite{GLi} is refined to the following lemma.
\begin{lem}\label{lem2-2}
Assume $(M,g)$ is a compact Riemannian manifold with boundary of $C^{4,\alpha}$ and $R_g=-n(n-1)$. For any $u_0\in C^{2,\alpha}(M)$ such that $u_0>1$ and a positive function $\phi\in C_{loc}^{2+\alpha,1+\frac{\alpha}{2}}(\partial M\times[0,+\infty))$ satisfying the compatible condition $(2.1)$ with $u_0$, $\phi>1$ for $t\geq0$, $\phi_t\geq0$ for $t\geq t_1$ with some constant $t_1>0$ and satisfying $(2.4)$ as $t\to\infty$ and $\displaystyle\lim_{t\to\infty}\inf_{\partial M}\phi=\infty$, there exists a unique positive solution $u\in C_{loc}^{2+\alpha,1+\frac{\alpha}{2}}(M\times[0,+\infty))$ to $(2.13)$, and $u$ converges to $u_{\infty}$ in $C_{loc}^2(M^{\circ})$ as $t\to+\infty$, where $u_{\infty}$ is the solution to the Loewner-Nirenberg problem $(1.1)-(1.2)$.
\end{lem}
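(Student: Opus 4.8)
The plan is to rerun the proof of Lemma~2.9 in \cite{GLi} (equivalently, the argument behind Theorem~\ref{mainthm2-1}), tracking the one place where the hypothesis ``$\phi_t\geq 0$ for all $t\geq 0$'' is actually used and weakening it to ``$\phi_t\geq 0$ for $t\geq t_1$''. First, short-time existence: with the compatibility condition $(2.1)$ the Cauchy-Dirichlet problem $(2.13)$ is strictly parabolic at $t=0$, so the inverse function theorem together with the parabolic Schauder theory of \cite{LSU,Lieberman} produces a unique solution $u\in C^{2+\alpha,1+\frac{\alpha}{2}}(M\times[0,\delta_0])$ for some $\delta_0>0$, and uniqueness on any time interval follows from the maximum principle exactly as in Lemma~\ref{lem_comparisonCDP1}. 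By the standard theory, long-time existence reduces to uniform-in-time $C^0$ bounds on $u$ over each slab $M\times[0,T]$ and locally uniform bounds on $M^\circ$; the upper bound is immediate from the maximum principle (comparing with $\max\{\sup_M u_0,\,\sup_{\partial M\times[0,T]}\phi\}$, and with Loewner--Nirenberg solutions on enlarged interior subdomains for the bound on $M^\circ$), so the substance is the lower bound.

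On a compact $D\subseteq M^\circ$ one bounds $u$ from below by the Loewner--Nirenberg solution of a slightly larger interior subdomain, by the comparison principle, as in the argument preceding Theorem~\ref{thm_comparison2} and in the proof of Theorem~\ref{thm_Flow1convergence}. The delicate estimate is the lower bound \emph{near $\partial M$}: one uses an explicit subsolution of the form $\underline u(x,t)=c\big(r(x)+\epsilon(t)\big)^{\frac{2-n}{2}}+(\text{lower order})$, with $r$ the distance to $\partial M$ and $\epsilon(t)\downarrow 0$ a suitably slow function, modelled on Lemma~\ref{lem_lowerboundbdary}. For $\underline u$ to sit below $u$ on the parabolic boundary $\partial M\times[0,\infty)$ one needs the small constant $\epsilon$ calibrating this barrier to be chosen below $\frac{1}{10n}\inf_{\partial M\times[0,\infty)}\phi$. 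In \cite{GLi} that infimum was evaluated at $t=0$, since monotonicity from $t=0$ made it the global infimum; here $\phi$ need not be monotone on $[0,t_1]$, but it is continuous with $\phi>1$, so $\inf_{\partial M\times[0,t_1]}\phi>0$, and since $\phi_t\geq 0$ on $[t_1,\infty)$ one still has $\inf_{\partial M\times[0,\infty)}\phi=\inf_{\partial M\times[0,t_1]}\phi$. Hence it suffices to take $\epsilon<\frac{1}{10n}\inf_{\partial M\times[0,t_1]}\phi$; this is the only change from \cite{GLi}, and with it the barrier comparison goes through for all $t\geq 0$, giving the near-boundary lower bound (in particular the rate $u\gtrsim\big(x+(\inf_{\partial M\times[0,t_1]}\phi)^{\frac{2}{2-n}}\big)^{\frac{2-n}{2}}-C$) and hence long-time existence.

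For convergence, I would sandwich $u$: from above by the Loewner--Nirenberg solution $u_\infty$ on $M^\circ$ (maximum principle plus enlarged-subdomain comparison, as in Theorem~\ref{thm_Flow1convergence}), and from below by the solution $\underline v$ of an auxiliary Cauchy-Dirichlet problem whose initial datum is a strict subsolution of the Loewner--Nirenberg equation with $\underline v_0\leq u_0$ and whose boundary datum $\phi_0\leq\phi$ is constructed to be nondecreasing in $t$ for all $t\geq 0$ (the gluing construction from the proof of Theorem~\ref{thm_Flow1convergence}); for this monotone auxiliary problem the analog of Theorem~\ref{thm_convertibp1-a} gives $\underline v\to u_\infty$ locally uniformly. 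Then $u(\cdot,t)\to u_\infty$ pointwise on $M^\circ$, interior parabolic estimates upgrade this to $C^2_{loc}(M^\circ)$ convergence, the limit solves the Loewner--Nirenberg problem $(1.1)$--$(1.2)$, and the boundary barrier forces completeness, identifying the limit with the unique $u_\infty$.

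I expect no new analytic difficulty: the entire content of the refinement is the observation in the second paragraph, that every monotonicity-dependent step of \cite{GLi} is needed only for $t\geq t_1$, and every constant there controlled by $\inf_{\partial M\times\{0\}}\phi$ is now controlled by $\inf_{\partial M\times[0,t_1]}\phi$, which is finite and positive because $[0,t_1]$ is compact. The only thing to watch is to perform this replacement consistently everywhere that infimum enters the estimates.
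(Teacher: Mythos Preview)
Your proposal is correct and follows the same strategy as the paper: rerun the argument of Lemma~2.9 in \cite{GLi}, checking that the monotonicity of $\phi$ is only needed from some finite time on. The one point of divergence is that the paper asserts \emph{no} change to the proof of Lemma~2.9 is required, whereas you import the modification $\epsilon<\frac{1}{10n}\inf_{\partial M\times[0,t_1]}\phi$ that the paper prescribes for the refinement of Theorem~1.1 in \cite{GLi}. The reason the paper can dispense with even this adjustment for Lemma~2.9 is the standing hypothesis $\phi>1$ on $\partial M\times[0,\infty)$: this already gives a uniform positive lower bound on $\phi$ for all time, so the barrier constant in \cite{GLi} works as originally chosen. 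Your version is not wrong, just slightly more cautious than necessary; otherwise the sketch (short-time existence, $C^0$ bounds for long-time existence, sandwich by an auxiliary monotone flow for convergence) matches the intended argument.
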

Notice that for any positive initial data $u_0\in C^{2,\alpha}(M)$, such a function $\phi$ satisfying the conditions in Lemma \ref{lem2-2} always exists. No changes are needed in the proof of Lemma 2.9 and Theorem 1.2 in \cite{GLi}.

\section{Asymptotic behavior of the geodesic curvature of $\partial M$ under the Cauchy-Dirichlet problem  of the normalized Ricci flow}\label{section_Asympbahavgeocurv1}

Let $(\overline{M},g)$ be a compact surface with its interior $M$ and boundary $\partial M$. As discussed in the proof of Lemma \ref{lem_lowerboundbdary}, there exists $\delta_1\in(0,1)$ small such that the exponential map $F: \partial M\times[0,\delta_1]\to \overline{M}$, which maps $(q,r)$ to the point Exp$_{q}(-r n)$ on the geodesic starting from $q$ along the direction of the inner normal vector $-n$ at $q$, is a diffeomorphism to the image $U_{\delta_1}=\{x\in \overline{M}\big|r(x)\leq \delta_1\}$ with $r(x)$ the distance from $x$ to $\partial M$. Here $q\in\partial M$ realizes the distance $r$ of Exp$_{q}(-r n)$ to $\partial M$. Thus $g$ has the orthogonal decomposition on $U_{\delta_1}$,
\begin{align*}
g=dr^2+g_r=dr^2+f(r,s)ds^2,
\end{align*}
with $f>0$ smooth in $U_{\delta_1}$.

We consider the asymptotic behavior of the geodesic curvature on the boundary along the Cauchy-Dirichlet problem $(\ref{equn_RF3-2})-(\ref{equn_ibp3-2})$ of the normalized Ricci flow, as $t\to+\infty$. Under the assumption of Theorem \ref{thm_CDPconvergnl}, we have proved that there exists a unique solution $u$ for all time $t\geq0$ and $u$ converges to $u_{LN}$ locally uniformly in $C^2$ sense in $M$. We always assume that the condition in Theorem \ref{thm_CDPconvergnl} holds in this section. We will show that when the Dirichlet boundary data $\phi$ grows slowly to infinity, then the geodesic curvature $k_{e^{2u}g}\to1$ uniformly on $\partial M$ as $t\to+\infty$; while when $\phi$ grows sufficiently fast to infinity, the geodesic curvature $k_{e^{2u}g}\to+\infty$ uniformly in a certain speed on $\partial M$ as $t\to+\infty$.

We now assume that the boundary data $\phi$ depends only on $t$ when $t\geq T_3$ for some $T_3>0$ large and $\phi'(t)\to0$ as $t\to+\infty$. We first define a test function $\xi(x,t)$ on $U_{\delta_1}\times[T_3,\infty)$ with $T_3>0$ to be fixed such that
\begin{align}
\xi(x,t)=-\log(a r(x)+\varepsilon(t))\pm w(x),
\end{align}
where $r(x)$ is the distance function to $\partial M$, $\varepsilon=e^{-\phi(t)}$ and $a>0$ is a constant to be determined, and
\begin{align}
w(x)=A((r(x)+\delta)^{-p}-\delta^{-p})
\end{align}
with two large constants $A>1$ and $p>1$ and a small constant $\delta>0$ to be determined. Hence, $w(x)\leq 0$ on $U_{\delta_1}$ with $w(x)=0$ on $\partial M$. Therefore, we have the following computation:
\begin{align*}
\frac{\partial \xi}{\partial t}&=-\frac{\varepsilon'(t)}{a r(x)+\varepsilon(t)}=\frac{\varepsilon(t)\phi'(t)}{a r(x)+\varepsilon(t)},\\
\Delta_g\xi(x,t)&=\frac{\partial^2\xi}{\partial r^2}+\frac{1}{2}f^{-1}\frac{\partial f}{\partial r}\frac{\partial \xi}{\partial r}\\
&=\frac{a^2}{(a r(x)+\varepsilon(t))^2}\pm Ap(p+1)(r+\delta)^{-p-2}+\frac{1}{2}f^{-1}\frac{\partial f}{\partial r}[-\frac{a}{a r(x)+\varepsilon(t)}\mp Ap(r(x)+\delta)^{-p-1}].
\end{align*}
Therefore,
\begin{align*}
&e^{-2\xi}(\Delta_g\xi-K_g)-1\\
&=e^{\mp 2w}[a^2\pm\frac{Ap(p+1)(a r(x)+\varepsilon(t))^2}{(r+\delta)^{p+2}}+\frac{1}{2}f^{-1}\frac{\partial f}{\partial r}\big(-a(a r(x)+\varepsilon(t))\mp\frac{Ap(a r(x)+\varepsilon(t))^2}{(r(x)+\delta)^{p+1}}\big)\\
&\,\,\,\,\,\,\,\,\,\,\,\,\,\,\,\,\,-K_g(a r(x)+\varepsilon(t))^2]-1,
\end{align*}
For any $a\in(1,2)$ close to $1$, we define $\underline{u}=-\log(a r(x)+\varepsilon(t))+ w(x)$ on $U_{\delta_1}\times[T_3,\infty)$. Now fix $\delta>0$ small. We then choose $p>1$ large so that
\begin{align*}
p+1> 2(\delta_1+\delta) \displaystyle\sup_{U_{\delta_1}}\big|f^{-1}\frac{\partial f}{\partial r}\big|,
\end{align*}
and
\begin{align*}
p(p+1)\geq 2(\delta_1+\delta)^2\sup_{\overline{M}}|K_g|,
\end{align*}
and hence,
\begin{align}
e^{-2\underline{u}}(\Delta_g\underline{u}-K_g)-1\geq a^2-1-a(ar(x)+\varepsilon(t))\,\sup_{U_{\delta_1}}\big|f^{-1}\frac{\partial f}{\partial r}\big|,
\end{align}
on $U_{\delta_1}\times[T_3,\infty)$. Since $\phi(t)\to\infty $ as $t\to\infty$, there exists $T_4>T_3$ such that
\begin{align*}
\varepsilon(t)=e^{-\phi(t)}<(a^2-1)[4a\displaystyle\sup_{U_{\delta_1}}\big|f^{-1}\frac{\partial f}{\partial r}\big|]^{-1},
\end{align*}
and $\phi(t)>0$ for $t\geq T_4$. Let $\delta_2\in(0,\delta_1)$ satisfy that $\delta_2<\frac{1}{4} (1-a^{-2})$. Denote $U_{\delta_2}=\{x\in U_{\delta_1}\big|r(x)\leq \delta_2\}$. Then, we take $T_4$ large so that $\phi'(t)\leq \frac{a^2-1}{4}$ for $t\in[T_4,\infty)$, and hence we have that
\begin{align*}
\underline{u}_t\leq e^{-2\underline{u}}(\Delta_g\underline{u}-K_g)-1,
\end{align*}
on $U_{\delta_2}\times [T_4,\infty)$. Moreover, by the locally uniform convergence of $u$, we also require $T_4$ to be large so that
\begin{align*}
\frac{11}{10}u_{NL}(x)\geq u(x,t)\geq \frac{9}{10}u_{NL}(x),
\end{align*}
 and hence $u$ is uniformly bounded from the above and below, on $\Sigma_{\delta_2}\times[T_4,\infty)$ with $\Sigma_{\delta_2}=\{x\in M\big|\,r(x)=\delta_2\}$. Notice that
 \begin{align*}
 -\log(a\delta_2+1)\leq-\log(a\delta_2+e^{-\phi(t)})\leq -\log(a\delta_2).
 \end{align*}
  Now we choose $A>1$ and $p>1$ large so that
\begin{align*}
\underline{u}\leq u,
\end{align*}
on $\Sigma_{\delta_2}\times[T_4,\infty)$. Since $u(\cdot, T_4)$ and $\underline{u}(\cdot,T_4)$ are continuous on $U_{\delta_2}$, there exists $A_1>1$ and $p_1>1$ such that for $A>A_1$ and $p>p_1$, we have that
\begin{align*}
\underline{u}\leq u
\end{align*}
on $U_{\delta_2}\times\{T_4\}$. Then, we apply Lemma \ref{lem_comparisonCDP1} on $U_{\delta_2}\times [T_4,\infty)$ to have that
\begin{align*}
\underline{u}\leq u
\end{align*}
on $U_{\delta_2}\times [T_4,\infty)$. Recall that $\underline{u}=u$ on $\partial M\times[T_4,\infty)$, and hence,
\begin{align*}
k_{e^{2u}g}=e^{-\phi(t)}[\frac{\partial u}{\partial n_g}+k_{g}]&\leq e^{-\phi(t)}[\frac{\partial \underline{u}}{\partial n_g}+k_g]\\
&=e^{-\phi(t)}[-\frac{\partial \underline{u}}{\partial r}\big|_{r=0}+k_g]=a+[k_g+Ap\delta^{-p-1}]e^{-\phi(t)},
\end{align*}
on $\partial M\times[T_4,\infty)$, where $n_g$ is the outer normal vector fields on $\partial M$. Similarly, for any $b\in(0,1)$ close to $1$, we define \begin{align}\label{equntf}
\overline{u}(x,t)=-\log(b\,r(x)+\varepsilon(t))-w(x),
\end{align}
and then there exists $T_4'>T_3$, $\delta_2'\in(0,\delta_1)$, and $A'>1$ and $p'>1$ large which depend on $b$ so that
\begin{align*}
\overline{u}\geq u,
\end{align*}
on $U_{\delta_2'}\times [T_4',\infty)$; moreover, we have
\begin{align*}
k_{e^{2u}g}=e^{-\phi(t)}[\frac{\partial u}{\partial n_g}+k_{g}]\geq e^{-\phi(t)}[\frac{\partial \overline{u}}{\partial n_g}+k_g]= b+[k_g-Ap\delta^{-p-1}]e^{-\phi(t)},
\end{align*}
on $\partial M\times[T_4',\infty)$. Now let $a\to 1$ and $b\to 1$, and hence we have that if $\phi'(t)\to 0$ as $t\to+\infty$, then
\begin{align*}
k_{e^{2u}g}=e^{-\phi(t)}[\frac{\partial u}{\partial n_g}+k_{g}]\to 1,
\end{align*}
uniformly on $\partial M$ as $t\to\infty$. This completes the proof of the following theorem:
\begin{thm}\label{thm_asymptbdlowspeed1}
Let $(\overline{M},g)$ be a compact surface with its interior $M$ and boundary $\partial M$. Under the condition of Theorem \ref{thm_CDPconvergnl}, we assume that there exists $T_3>0$ such that $\phi$ depends only on $t$, i.e., $\phi=\phi(t)$ for $t\geq T_3$, and $\phi'(t)\to0$ as $t\to\infty$. Then the solution $u(x,t)$ obtained in Theorem \ref{thm_CDPconvergnl}, which converges locally uniformly in $C^2$ to $u_{LN}$, satisfies that the geodesic curvature on $\partial M$
\begin{align*}
k_{e^{2u}g}=e^{-u}(\frac{\partial u}{\partial n_g}+k_g)\to 1,
\end{align*}
uniformly on $\partial M$ as $t\to\infty$.
\end{thm}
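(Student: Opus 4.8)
The plan is to localize the analysis near $\partial M$ and sandwich $u$ between two explicit barriers whose normal derivatives at $\partial M$ are computable, then read off the geodesic curvature and let the barrier parameters tend to their limiting values. Since the assertion only concerns the boundary and $\phi=\phi(t)$ for $t\ge T_3$, I would work in the collar $U_{\delta_1}$ with Fermi coordinates $g=dr^2+f(r,s)\,ds^2$ as set up above, and use the comparison Lemma \ref{lem_comparisonCDP1} together with the locally uniform $C^2$ convergence $u(\cdot,t)\to u_{LN}$ already provided by Theorem \ref{thm_CDPconvergnl}.

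First I would build the subsolution $\underline{u}(x,t)=-\log\big(a\,r(x)+\varepsilon(t)\big)+w(x)$ with $a>1$ close to $1$, $\varepsilon(t)=e^{-\phi(t)}$, and $w(x)=A\big((r(x)+\delta)^{-p}-\delta^{-p}\big)\le 0$. Computing $\underline{u}_t=\varepsilon\phi'/(ar+\varepsilon)$ and $e^{-2\underline{u}}(\Delta_g\underline{u}-K_g)-1$ in the Fermi coordinates, the leading term of the elliptic expression is $a^2e^{-2w}\ge a^2>1$, while the error terms coming from $f^{-1}\partial_r f$, from $K_g$, and from the $w$–contributions can be absorbed by fixing $\delta$ small, then taking $p$ large, then restricting to a thinner collar $U_{\delta_2}$ and to $t\ge T_4$ so that $\varepsilon(t)$ is tiny. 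At the same time $\underline{u}_t\le\phi'(t)\le (a^2-1)/4$ once $T_4$ is large, using the hypothesis $\phi'\to 0$. This makes $\underline{u}$ a genuine subsolution of the flow equation on $U_{\delta_2}\times[T_4,\infty)$.

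Next I would check the parabolic boundary of the cylinder $U_{\delta_2}\times[T_4,\infty)$: on $\partial M$ one has $\underline{u}=-\log\varepsilon(t)=\phi(t)=u$; on the inner face $\Sigma_{\delta_2}$ the interior convergence gives $u\ge\tfrac{9}{10}u_{LN}$ for $t$ large, which dominates $\underline{u}\le-\log(a\delta_2)$ once $A,p$ are chosen large; and at the initial time $t=T_4$ continuity plus the boundary match forces $\underline{u}(\cdot,T_4)\le u(\cdot,T_4)$ for $A,p$ large. Then Lemma \ref{lem_comparisonCDP1} yields $\underline{u}\le u$ on the whole cylinder, and since $\underline{u}=u$ on $\partial M$ this bounds the inner normal derivative of $u$ from the right, giving $k_{e^{2u}g}=e^{-\phi(t)}\big(\tfrac{\partial u}{\partial n_g}+k_g\big)\le a+\big(k_g+Ap\delta^{-p-1}\big)e^{-\phi(t)}$ on $\partial M\times[T_4,\infty)$. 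Running the mirror argument with the supersolution $\overline{u}=-\log(b\,r+\varepsilon)-w$, $0<b<1$ close to $1$, gives the matching lower bound $k_{e^{2u}g}\ge b+\big(k_g-Ap\delta^{-p-1}\big)e^{-\phi(t)}$. Letting $t\to\infty$ (so $e^{-\phi(t)}\to 0$) and then $a\downarrow 1$, $b\uparrow 1$ yields $k_{e^{2u}g}\to 1$ uniformly on $\partial M$.

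I expect the main obstacle to be the \emph{matching on the inner face} $\Sigma_{\delta_2}$ and at the initial time $T_4$: the barrier is only a \emph{local} sub/supersolution, so it must be pushed strictly below (resp. above) the true solution there, which forces one to invoke the already-established interior convergence and the $-\log r+O(r)$ asymptotics of $u_{LN}$, and then to choose the constants $\delta$, $p$, $T_4$, $A$ in a compatible order — this ordering is the real bookkeeping content of the argument. A secondary subtlety is that $\phi'$ is not assumed sign-definite, so $\underline{u}_t$ must be controlled using $\phi'\to 0$ rather than monotonicity, which is exactly why the hypothesis is phrased that way.
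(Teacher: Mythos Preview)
Your proposal is correct and follows essentially the same approach as the paper: the same barrier functions $\underline{u}=-\log(ar+\varepsilon)+w$ and $\overline{u}=-\log(br+\varepsilon)-w$ with $\varepsilon=e^{-\phi}$, the same order of choosing $\delta$, $p$, $\delta_2$, $T_4$, $A$, the same use of Lemma \ref{lem_comparisonCDP1} on the collar, and the same limiting argument $a\downarrow 1$, $b\uparrow 1$. One small remark: under the hypotheses of Theorem \ref{thm_CDPconvergnl} one actually has $\phi_t\ge 0$ for $t\ge T$, so your closing comment about $\phi'$ not being sign-definite is unnecessary, though harmless.
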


One can use the same test functions to estimate the upper and lower bound of $\frac{\partial u}{\partial n_g}$ on $\partial M$ when the boundary data $\phi$ is a certain small perturbation of the function $\phi(t)$ in spatial directions for $t\geq T_3$. Indeed, we have the following generalization:
\begin{thm}\label{thm_lowspeedCDdataperturb}
Let $(\overline{M},g)$ be a compact surface with its interior $M$ and boundary $\partial M$. Under the condition of Theorem \ref{thm_CDPconvergnl}, we assume that there exists $T_3>0$ and a constant $C_1>0$ such that
\begin{align}
e^{-\phi}(|\nabla^2\phi|+|\nabla\phi|^2+|\nabla\phi|)\leq C_1,
\end{align}
for $(x,t)\in \partial M\times[T_3,\infty)$, and $\sup_{\partial M}\frac{\partial\phi}{\partial t}\to0$ as $t\to\infty$. Then the solution $u(x,t)$ obtained in Theorem \ref{thm_CDPconvergnl}, which converges locally uniformly in $C^2$ to $u_{LN}$, satisfies that the geodesic curvature on $\partial M$
\begin{align*}
k_{e^{2u}g}=e^{-u}(\frac{\partial u}{\partial n_g}+k_g)\to 1,
\end{align*}
uniformly on $\partial M$ as $t\to\infty$.
\end{thm}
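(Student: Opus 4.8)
The plan is to run the barrier argument from the proof of Theorem~\ref{thm_asymptbdlowspeed1} essentially verbatim, but with the two test functions carrying the spatial dependence of $\phi$; the hypothesis $e^{-\phi}(|\nabla^{2}\phi|+|\nabla\phi|^{2}+|\nabla\phi|)\le C_{1}$ is exactly what is needed to keep the extra terms that this dependence produces uniformly small in the collar $U_{\delta_{1}}$.

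First I would extend the boundary data into the collar: using the normal exponential chart $F(q,r)$, set $\tilde\phi(F(q,r),t)=\phi(q,t)$, so that $\tilde\phi$ is constant along the normal geodesics, $\partial_{r}\tilde\phi\equiv0$ on $U_{\delta_{1}}$, and $|\nabla\tilde\phi|$, $|\nabla^{2}\tilde\phi|$ are bounded by $|\nabla_{\partial M}\phi|$, $|\nabla^{2}_{\partial M}\phi|$ and the fixed geometry of the collar. Hence there is a constant $C_{2}>0$ with $e^{-\phi}\bigl(|\nabla^{2}\tilde\phi|+|\nabla\tilde\phi|^{2}+|\nabla\tilde\phi|\bigr)\le C_{2}$ on $U_{\delta_{1}}\times[T_{3},\infty)$. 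Then, with $\varepsilon=e^{-\tilde\phi}$ and $w(x)=A\bigl((r(x)+\delta)^{-p}-\delta^{-p}\bigr)\le0$, I take for $a\in(1,2)$ and $b\in(0,1)$ close to $1$
\begin{align*}
\underline{u}(x,t)=-\log\!\bigl(a\,r(x)+\varepsilon(x,t)\bigr)+w(x),\qquad
\overline{u}(x,t)=-\log\!\bigl(b\,r(x)+\varepsilon(x,t)\bigr)-w(x).
\end{align*}

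The hard part will be the subsolution/supersolution check. Writing $F=a r+\varepsilon$, and using $\Delta_{g}(-\log F)=-\Delta_{g}F/F+|\nabla_{g}F|^{2}/F^{2}$, $\Delta_{g}r=\frac{1}{2}f^{-1}\partial_{r}f$, $\nabla\varepsilon=-\varepsilon\nabla\tilde\phi$, $\Delta_{g}\varepsilon=\varepsilon(|\nabla\tilde\phi|^{2}-\Delta_{g}\tilde\phi)$, and $e^{-2\underline{u}}=F^{2}e^{-2w}$, one obtains
\begin{align*}
e^{-2\underline{u}}(\Delta_{g}\underline{u}-K_{g})-1
= e^{-2w}\Bigl[a^{2}+\varepsilon^{2}|\nabla\tilde\phi|^{2}+O\bigl(F\,(C_{2}+\sup|f^{-1}\partial_{r}f|)\bigr)+(w\text{-terms})-K_{g}F^{2}\Bigr]-1,
\end{align*}
where $\varepsilon^{2}|\nabla\tilde\phi|^{2}=e^{-\phi}\bigl(e^{-\phi}|\nabla\tilde\phi|^{2}\bigr)\le C_{2}e^{-\phi}\to0$, and $\partial_{t}\underline{u}=\varepsilon\,\partial_{t}\tilde\phi/F$ obeys $|\partial_{t}\underline{u}|\le\sup_{\partial M}|\partial_{t}\phi|\to0$. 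The point I have to verify carefully is that the only way the spatial derivatives of $\phi$ enter the computation is through terms carrying either a factor $\varepsilon=e^{-\phi}$ (from $\nabla\varepsilon$, $\Delta_{g}\varepsilon$) or a factor $F$, so that the hypothesis forces all of them to be uniformly small on $U_{\delta_{2}}\times[T_{4},\infty)$ once $\delta_{2}$ is small and $T_{4}$ large. Fixing $\delta$ small and then $p$ large absorbs the $w$-terms and the $K_{g}F^{2}$ term exactly as in Theorem~\ref{thm_asymptbdlowspeed1} (the sign of the $w$-contribution is the helpful one), leaving $e^{-2\underline{u}}(\Delta_{g}\underline{u}-K_{g})-1\ge\frac{a^{2}-1}{2}>\partial_{t}\underline{u}$ there, so $\underline{u}$ is a subsolution of $(\ref{equn_RF3-2-1})$; the reversed computation makes $\overline{u}$ a supersolution on some $U_{\delta_{2}'}\times[T_{4}',\infty)$.

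It then remains to compare and read off the boundary derivative. On $\partial M\times[T_{4},\infty)$ we have $\underline{u}=-\log\varepsilon=\phi=u$ since $w=0$; on $\Sigma_{\delta_{2}}\times[T_{4},\infty)$ the locally uniform convergence $u\to u_{LN}$ from Theorem~\ref{thm_CDPconvergnl} bounds $u$ from below by a constant while $-\log(a\delta_{2}+\varepsilon)$ stays bounded, so enlarging $A,p$ forces $\underline{u}\le u$; and on $U_{\delta_{2}}\times\{T_{4}\}$ continuity together with the boundary match lets us enlarge $A,p$ once more. Lemma~\ref{lem_comparisonCDP1} then gives $\underline{u}\le u$ on $U_{\delta_{2}}\times[T_{4},\infty)$, and symmetrically $u\le\overline{u}$ on $U_{\delta_{2}'}\times[T_{4}',\infty)$. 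Since $\underline{u}=u=\overline{u}$ on $\partial M$ and $\partial_{r}\varepsilon\equiv0$ for the chosen extension, comparing normal derivatives ($\partial/\partial n_{g}=-\partial/\partial r$) yields
\begin{align*}
b+(k_{g}-Ap\delta^{-p-1})e^{-\phi}\le k_{e^{2u}g}=e^{-\phi}\Bigl(\frac{\partial u}{\partial n_{g}}+k_{g}\Bigr)\le a+(k_{g}+Ap\delta^{-p-1})e^{-\phi}
\end{align*}
on $\partial M$ for $t$ large. Because $\phi\ge\log\xi(t)\to\infty$ uniformly on $\partial M$, the error terms vanish uniformly, so $\limsup_{t\to\infty}\sup_{\partial M}k_{e^{2u}g}\le a$ and $\liminf_{t\to\infty}\inf_{\partial M}k_{e^{2u}g}\ge b$; letting $a\downarrow1$ and $b\uparrow1$ gives $k_{e^{2u}g}\to1$ uniformly on $\partial M$ as $t\to\infty$.
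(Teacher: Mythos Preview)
Your proposal is correct and follows essentially the same approach as the paper: the same normal extension of $\phi$ into the collar, the same barrier functions $\underline{u},\overline{u}=-\log(a r+\varepsilon)\pm w$ with $\varepsilon=e^{-\tilde\phi}$, the same observation that every extra term produced by the spatial dependence of $\phi$ carries a factor $\varepsilon=e^{-\phi}$ (or $F$) and is hence controlled by the hypothesis, and the same comparison via Lemma~\ref{lem_comparisonCDP1} followed by letting $a\downarrow1$, $b\uparrow1$. The only difference is cosmetic: the paper writes the Laplacian out explicitly in Fermi coordinates $(r,s)$ while you use the intrinsic formulas $\Delta_g(-\log F)=-\Delta_gF/F+|\nabla F|^2/F^2$, $\nabla\varepsilon=-\varepsilon\nabla\tilde\phi$, $\Delta_g\varepsilon=\varepsilon(|\nabla\tilde\phi|^2-\Delta_g\tilde\phi)$.
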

\begin{proof}
The proof is almost the same as that of Theorem \ref{thm_asymptbdlowspeed1}. We only show the differences.

We extend $\phi(x,t)$ on $\partial M\times[0,\infty)$ to $U_{\delta_1}\times[0,\infty)$ using the exponential map from the boundary in the following way: Recall that the map $F:\, \partial M\times [0,\delta_1]\to \overline{M}$ defined at the beginning of the section such that $F(q,r)=$Exp$_q(-r n)$ is a diffeomorphism to $U_{\delta_1}$, and for each $x\in U_{\delta_1}$, there exists a unique $q=q(x)\in \partial M$ that realizes the distance $r(x)$ from $x$ to $\partial M$. Indeed, $x=F(q(x),r(x))$. Then let $\phi(x,t)=\phi(q(x),t)$ for $(x,t)\in U_{\delta_1}\times[0,\infty)$. In particular, $\frac{\partial \phi}{\partial r}=0$ on $U_{\delta_1}\times[T_3,\infty)$.

We define a test function $\xi(x,t)$ on $U_{\delta_1}\times[T_3,\infty)$ with $T_3>0$ to be fixed such that
\begin{align}
\xi(x,t)=-\log(a r(x)+\varepsilon(x,t))\pm w(x),
\end{align}
where $r(x)$ is the distance function to $\partial M$, $\varepsilon=e^{-\phi(x,t)}$ and $a>0$ is a constant to be determined, and
\begin{align}
w(x)=A((r(x)+\delta)^{-p}-\delta^{-p}),
\end{align}
with two large constants $A>1$ and $p>1$ and a small constant $\delta>0$ to be determined.

 Then we have that
\begin{align*}
\frac{\partial \xi}{\partial t}&=\frac{\varepsilon(x,t)\phi_t(x,t)}{a r(x)+\varepsilon(x,t)},\\
\Delta_g\xi(x,t)&=\frac{\partial^2\xi}{\partial r^2}+\frac{1}{2}f^{-1}\frac{\partial f}{\partial r}\frac{\partial \xi}{\partial r}+f^{-1}\xi_{ss}-\frac{1}{2}f^{-1}f_s\xi_s\\
&=\frac{a^2}{(a r(x)+\varepsilon(t))^2}\pm Ap(p+1)(r+\delta)^{-p-2}+\frac{1}{2}f^{-1}\frac{\partial f}{\partial r}[-\frac{a}{a r(x)+\varepsilon(t)}\mp Ap(r(x)+\delta)^{-p-1}]\\
&\,\,\,\,\,\,+\frac{\varepsilon^2(\phi_s)^2}{(ar+\varepsilon(x,t))^2f}+\frac{\varepsilon\,(\phi_{ss}-(\phi_s)^2)}{(ar+\varepsilon(x,t))f}-\frac{f_s\,\varepsilon\,\phi_s}{2\,(ar+\varepsilon(x,t))f}.
\end{align*}
Therefore,
\begin{align*}
&e^{-2\xi}(\Delta_g\xi-K_g)-1\\
&=e^{\mp 2w}[a^2\pm\frac{Ap(p+1)(a r(x)+\varepsilon(t))^2}{(r+\delta)^{p+2}}+\frac{1}{2}f^{-1}\frac{\partial f}{\partial r}\big(-a(a r(x)+\varepsilon(t))\mp\frac{Ap(a r(x)+\varepsilon(t))^2}{(r(x)+\delta)^{p+1}}\big)\\
&\,\,\,\,\,\,\,\,\,\,\,\,\,\,\,\,\,-K_g(a r(x)+\varepsilon(t))^2+f^{-1}\varepsilon^2(\phi_s)^2+f^{-1}\varepsilon\,(\phi_{ss}-(\phi_s)^2)(ar+\varepsilon(x,t))\\
&\,\,\,\,\,\,\,\,\,\,\,\,\,\,\,\,\,-(2f)^{-1}\,f_s\,\varepsilon\,\phi_s\,(ar+\varepsilon(x,t))]-1,
\end{align*}
Let $C=\sup_{U_{\delta_1}}(f^{-1}+(2f)^{-1}|f_s|)$. For any $a\in(1,2)$ close to $1$, we define
\begin{align*}
\underline{u}=-\log(a r(x)+\varepsilon(x,t))+ w(x),
\end{align*}
on $U_{\delta_1}\times[T_3,\infty)$. Now fix $\delta>0$ small. We then choose $p>1$ large so that
\begin{align*}
p+1> 2(\delta_1+\delta) \displaystyle\sup_{U_{\delta_1}}\big|f^{-1}\frac{\partial f}{\partial r}\big|,
\end{align*}
and
\begin{align*}
p(p+1)\geq 2(\delta_1+\delta)^2\sup_{\overline{M}}|K_g|,
\end{align*}
and hence, using the fact $w(x)\leq0$ on $U_{\delta_1}$, we have
\begin{align}
e^{-2\underline{u}}(\Delta_g\underline{u}-K_g)-1\geq a^2-1-a(ar(x)+\varepsilon(x,t))\,\sup_{U_{\delta_1}}\big|f^{-1}\frac{\partial f}{\partial r}\big|-C(e^{-\phi}+ar)e^{-\phi}(|\phi_{ss}|+|\phi_s|^2+|\phi_s|).
\end{align}
 Recall that $\inf_{\partial M}\phi(\cdot,t)\to +\infty$ as $t\to\infty$, and
 \begin{align*}
 e^{-\phi}(|\phi_{ss}|+|\phi_s|^2+|\phi_s|)\leq C_1(1+\sup_{U_{\delta_2}}(f+f^{-1}|f_s|)),
 \end{align*}
 for some constant $C_1$ for $t\geq T_3$. For this $a\in(1,2)$, we choose $\delta_2\in(0,\delta_1)$ so that
 \begin{align*}
 \delta_2<\min\{\frac{a^2-1}{8CC_1a(1+\sup_{U_{\delta_2}}(f+f^{-1}|f_s|))},\,\frac{1}{8} (1-a^{-2})(1+\sup_{U_{\delta_1}}\big|f^{-1}\frac{\partial f}{\partial r}\big|)^{-1}\}.
 \end{align*}
 Also, since $\inf_{\partial M}\phi \to +\infty$ as $t\to+\infty$, let $T_4>T_3$ be large so that
\begin{align*}
CC_1(1+\sup_{U_{\delta_2}}(f+f^{-1}|f_s|))e^{-\phi}<\frac{1}{8}(a^2-1),
\end{align*}
and $\phi(x,t)>0$, and moreover,
\begin{align*}
\varepsilon(x,t)=e^{-\phi(x,t)}<(a^2-1)[8a\displaystyle\sup_{U_{\delta_1}}\big|f^{-1}\frac{\partial f}{\partial r}\big|]^{-1},
\end{align*}
for $t\geq T_4$. Denote $U_{\delta_2}=\{x\in U_{\delta_1}\big|r(x)\leq \delta_2\}$. Then, we also take $T_4$ large so that $\phi_t(x,t)\leq \frac{a^2-1}{8}$ for $t\in[T_4,\infty)$, and hence we have that
\begin{align*}
\underline{u}_t\leq e^{-2\underline{u}}(\Delta_g\underline{u}-K_g)-1,
\end{align*}
on $U_{\delta_2}\times [T_4,\infty)$. Then using the same argument as in the proof of Theorem \ref{thm_asymptbdlowspeed1}, we can choose $A>1$ and $p>1$ large, depending on $a>1$, such that $\underline{u}\leq u$ on $\Sigma_{\delta_2}\times[T_4,\infty) \bigcup U_{\delta_2}\times\{T_4\}$ and $\underline{u}=u$ on $\partial M\times[T_4,\infty)$ for $T_4$ large. Then, we apply Lemma \ref{lem_comparisonCDP1} on $U_{\delta_2}\times [T_4,\infty)$ to have
\begin{align*}
\underline{u}\leq u,
\end{align*}
on $U_{\delta_2}\times [T_4,\infty)$, and hence,
\begin{align*}
k_{e^{2u}g}=e^{-\phi}[\frac{\partial u}{\partial n_g}+k_{g}]&\leq e^{-\phi}[\frac{\partial \underline{u}}{\partial n_g}+k_g]\\
&=e^{-\phi}[-\frac{\partial \underline{u}}{\partial r}\big|_{r=0}+k_g]=a+[k_g+Ap\delta^{-p-1}]e^{-\phi},
\end{align*}
on $\partial M\times[T_4,\infty)$, where $n_g$ is the outer normal vector fields on $\partial M$ in $(\overline{M},g)$. Similarly, for any $b\in(0,1)$ close to $1$, define $\overline{u}(x,t)=-\log(b\,r(x)+\varepsilon(x,t))-w(x)$, and then there exists $T_4'>T_3$, $\delta_2'\in(0,\delta_1)$, and $A'>1$ and $p'>1$ large which depend on $b$ so that
\begin{align*}
\overline{u}\geq u,
\end{align*}
on $U_{\delta_2'}\times [T_4',\infty)$; moreover, we have
\begin{align*}
k_{e^{2u}g}=e^{-\phi}[\frac{\partial u}{\partial n_g}+k_{g}]\geq e^{-\phi}[\frac{\partial \overline{u}}{\partial n_g}+k_g]= b+[k_g-Ap\delta^{-p-1}]e^{-\phi},
\end{align*}
on $\partial M\times[T_4',\infty)$. Now let $a\to 1$ and $b\to 1$, and hence we have that
\begin{align*}
k_{e^{2u}g}=e^{-\phi}[\frac{\partial u}{\partial n_g}+k_{g}]\to 1,
\end{align*}
uniformly on $\partial M$ as $t\to\infty$.

\end{proof}

We now show that if $\phi$ depends only on $t$ for $t>0$ large and $\phi'(t)\to +\infty$ as $t\to+\infty$, then the solution $u$ to Theorem \ref{thm_CDPconvergnl} satisfies that $k_{e^{2u}g}\to +\infty$ as $t\to\infty$.
\begin{thm}\label{thm_asymptbdfastspeed1}
Let $(\overline{M},g)$ be a compact surface with its interior $M$ and boundary $\partial M$. Under the condition of Theorem \ref{thm_CDPconvergnl}, we assume that there exists $T_3>0$ such that $\phi$ depends only on $t$, i.e., $\phi=\phi(t)$ for $t\geq T_3$, and $\phi'(t)\to+\infty$ as $t\to\infty$. Then the solution $u(x,t)$ obtained in Theorem \ref{thm_CDPconvergnl}, which converges locally uniformly in $C^2$ to $u_{LN}$ on $M$, satisfies that the geodesic curvature on $\partial M$
\begin{align*}
k_{e^{2u}g}=e^{-u}(\frac{\partial u}{\partial n_g}+k_g)\to +\infty,
\end{align*}
uniformly on $\partial M$ as $t\to\infty$.
\end{thm}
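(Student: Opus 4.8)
The plan is to repeat the barrier scheme of Theorem~\ref{thm_asymptbdlowspeed1}, but with a supersolution that is much steeper at $\partial M$, so as to force $\partial_{n_g}u/e^{\phi}\to\infty$ rather than $\to1$. By conformal invariance take the background metric $h$ with $K_h=0$ and work in the Fermi collar $U_{\delta_1}=\{x:r(x)\le\delta_1\}$, where $g=dr^2+f(r,s)\,ds^2$; since $K_h=0$, the Jacobi equation forces $\sqrt f$ to be affine in $r$, hence the mean curvature $\Delta_g r$ of the slices $\Sigma_r$ is bounded by a fixed constant $C_0$ on the collar. Two elementary facts will be used throughout: by the maximum principle (as in the proof of Theorem~\ref{thm_longexistence1}) $u(\cdot,t)\le\phi(t)$ on $\overline M$ once $\phi$ is increasing and dominates $\sup_{\overline M}u_0$; and by the locally uniform convergence $u(\cdot,t)\to u_{LN}$ one may fix $\delta_2\in(0,\delta_1)$ and $T_4$ so that $u(\cdot,t)\le u_{LN}(\delta_2)+1$ on $\Sigma_{\delta_2}$ for $t\ge T_4$.

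Choose a smooth $\theta(t)$ with $\theta\ge1$, $\theta\to\infty$, $\theta'\ge0$, $\theta'/\theta$ bounded, and $\theta(t)\le\phi'(t)/C$ for $t\ge T_4$, where $C$ depends only on $C_0$ and $\delta_2$ — possible since $\phi'\to\infty$ — and set, on $U_{\delta_2}\times[T_4,\infty)$,
\[
\overline u(x,t)=\phi(t)-\sqrt{\theta(t)}\,e^{\phi(t)}\,r(x)+\tfrac12\,\theta(t)\,e^{2\phi(t)}\,r(x)^2 .
\]
Then $\overline u=\phi=u$ on $\partial M$ and $\partial_{n_g}\overline u|_{r=0}=\sqrt{\theta(t)}\,e^{\phi(t)}$. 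Completing the square in the exponent gives $e^{-2\overline u}=e^{-2\phi+2\sqrt\theta e^{\phi}r-\theta e^{2\phi}r^2}\le e^{\,1-2\phi}$ on the whole collar, so that, using $K_h=0$ and $|\Delta_g r|\le C_0$,
\[
e^{-2\overline u}(\Delta_g\overline u-K_h)-1\le e^{\,1-2\phi}\bigl(\theta e^{2\phi}+C_0(\sqrt\theta\,e^{\phi}+\theta e^{2\phi}\delta_2)\bigr)-1\le 2e\,\theta
\]
for $\delta_2$ small and $t$ large; while, writing $\sigma=\sqrt\theta\,e^{\phi}r$, one computes $\overline u_t=\phi'(1-\sigma+\sigma^2)+\tfrac{\theta'}{2\theta}\sigma(\sigma-1)$, and since $1-\sigma+\sigma^2\ge\tfrac34$ and $\theta'/\theta$ is bounded this is $\ge\tfrac12\phi'$ throughout the collar for $t$ large. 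With $C\ge 4e$ we get $\overline u_t\ge 2e\theta\ge e^{-2\overline u}(\Delta_g\overline u-K_h)-1$, i.e. $\overline u$ is a supersolution of $(\ref{equn_RF3-2-1})$ on $U_{\delta_2}\times[T_4,\infty)$. Granting the comparison $\overline u\ge u$ on the remaining parabolic boundary, Lemma~\ref{lem_comparisonCDP1} yields $u\le\overline u$ there; since $u=\overline u$ on $\partial M$ this forces $\partial_r(u-\overline u)|_{r=0}\le0$, whence on $\partial M$
\[
k_{e^{2u}g}=e^{-\phi}\bigl(\partial_{n_g}u+k_g\bigr)\ \ge\ e^{-\phi}\bigl(\sqrt{\theta(t)}\,e^{\phi}+k_g\bigr)\ =\ \sqrt{\theta(t)}+k_g\,e^{-\phi(t)}\ \longrightarrow\ \infty
\]
uniformly on $\partial M$, which is the assertion.

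The remaining point — and where I expect the main difficulty to lie — is verifying $\overline u\ge u$ on $\Sigma_{\delta_2}\times[T_4,\infty)$ and on $U_{\delta_2}\times\{T_4\}$. On $\Sigma_{\delta_2}$ this is immediate for $t$ large, since the quadratic term makes $\overline u(\cdot,t)\to\infty$ while $u\le u_{LN}(\delta_2)+1$. The delicate part is the initial slice: $\overline u(\cdot,T_4)$ dips slightly below $\phi(T_4)$ on the $O(e^{-\phi(T_4)})$-thin layer adjacent to $\partial M$, and the bare bound $u\le\phi(T_4)$ does not by itself control $u$ there. I would address this by proving first, as a preliminary, a one-sided estimate of the form $u(\mathrm{Exp}_{q}(-rn),t)\le\phi(t)-c\,e^{\phi(t)}r$ on a thin strip for $t$ large — either through a more carefully chosen preliminary supersolution, or by combining the boundary regularity $u\in C^{2+\alpha,1+\alpha/2}$, the identity $\Delta_h u(x,t)=(\phi'(t)+1)e^{2\phi(t)}$ on $\partial M$ obtained by differentiating the Dirichlet condition in $t$, and the Taylor expansion of $u$ in $r$ against $u\le\phi(t)$ — after which the initial-slice comparison for $\overline u$ is immediate (and, indeed, the second route would already give the lower bound on $\partial_{n_g}u$ directly). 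One then lets $\theta(t)\to\infty$ as slowly as the constraint $\theta\le\phi'/C$ permits. Everything else is a routine maximum-principle computation of the type already carried out in Section~\ref{section_Asympbahavgeocurv1}.
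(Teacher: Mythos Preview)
Your barrier computation (the supersolution inequality for $\overline u$) is correct, and the overall strategy is sound. The genuine gap is exactly where you flag it: the initial-slice comparison cannot be closed with the barrier as written. With your normalization $\theta\ge 1$ one has $\partial_r\overline u|_{r=0}=-\sqrt{\theta}\,e^{\phi}\le -e^{\phi}$, so $\overline u(\cdot,T_4)\ge u(\cdot,T_4)$ near $\partial M$ forces $-u_r(0,s,T_4)\ge e^{\phi(T_4)}$, i.e.\ $k_{e^{2u}g}(\cdot,T_4)\gtrsim 1$ --- which is already (a weak form of) the conclusion. Your two proposed repairs do not close this: the ``second route'' via Taylor expansion and the boundary identity $\Delta_h u|_{\partial M}=(\phi'+1)e^{2\phi}$ would need a uniform-in-$t$ control of the $C^{2,\alpha}$ remainder of $u(\cdot,t)$ near $\partial M$, but that Hölder seminorm blows up (indeed $u_{rr}|_{\partial M}\sim(\phi'+1)e^{2\phi}\to\infty$), so the quadratic Taylor polynomial only governs $u$ on an interval of length comparable to where the contradiction would occur, and no quantitative bound on $u_r$ falls out. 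The ``preliminary supersolution'' route is viable but you have to say what it is; the cleanest choice is to drop $\theta\ge 1$ and start with $\theta(T_4)$ very small: by the strong maximum principle $\phi(T_4)-u(\cdot,T_4)>0$ in $U_{\delta_2}\setminus\partial M$ and Hopf gives $u(\cdot,T_4)\le\phi(T_4)-c\,r$ on the collar for some $c=c(T_4)>0$, whence $\theta(T_4)\le c^2e^{-2\phi(T_4)}$ suffices (and the constraints $\theta'/\theta$ bounded, $\theta\le\phi'/C$, $\theta\to\infty$ are still compatible).

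The paper avoids this bootstrapping entirely by building the needed flexibility into the barrier. It first passes to a conformal background $h=e^{2u_N}g$ with $k_h\in(\tfrac{9}{10},\tfrac{10}{9})$ (rather than $K_h=0$), and takes
\[
\overline v(x,t)=-\log\bigl(b\,r(x)+e^{-\tilde\phi(t)}\bigr)\,-\,w(x)\,+\,e^{\tilde\phi(t)}r(x),\qquad w(x)=A\bigl[(r+\delta)^{-p}-\delta^{-p}\bigr],
\]
with $b>1$ a fixed constant. The extra term $-w(x)\ge 0$ vanishes on $\partial M$, has normal slope $Ap\delta^{-p-1}$ there, and --- crucially --- enters the supersolution estimate only through a nonpositive contribution once $p$ exceeds a fixed threshold. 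Hence for each fixed $b$ and $T_5$ one may take $A,p$ as large as needed (depending on the finite quantity $\|v(\cdot,T_5)\|_{C^1}$ and on $e^{\tilde\phi(T_5)}$) to force $\overline v\ge v$ on $U_{\delta_2}\times\{T_5\}$ and on $\Sigma_{\delta_2}\times[T_5,\infty)$, without any a priori information on $\partial_r v$. The conclusion is $k_{e^{2u}g}\ge b-1+O(e^{-\phi})$ for every $b>1$, whence $k_{e^{2u}g}\to\infty$. So the key idea you are missing is not a sharper analysis of $u$, but an extra degree of freedom in the barrier that decouples the initial-slice comparison from the asymptotic normal-derivative bound.
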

\begin{proof}
Notice that for the Loewner-Nirenberg problem $(\ref{equn_LNFn2})-(\ref{equn_LNBDn2})$, paralleling to the approach of Loewner and Nirenberg for higher dimension cases, one solves the sequence of Dirichlet boundary value problems for $N\in \mathbb{N}$:
\begin{align*}
&-\Delta_gu_N+K_g=-e^{2u_N},\,\,\,\text{in}\,\,M,\\
&u_N= N,\,\,\,\text{on}\,\,\partial M.
\end{align*}
Let $u_N$ be the unique solution to this Dirichlet problem, and using maximum principle we have that $u_N\to u_{LN}$ with $u_{LN}$ locally uniformly in $C^2$ sense in $M$. Using the argument in Theorem \ref{thm_asymptbdlowspeed1}, with $\varepsilon$ replaced by $e^{-N}$ in the test functions $\underline{u}$ and $\overline{u}$, we have the geodesic curvature $k_{e^{2u_N}g}\to 1$ as $N\to+\infty$. Therefore, since the problem $(\ref{equn_RF3-2})-(\ref{equn_ibp3-2})$ is conformally invariant, we choose the background metric $h=e^{2u_N}g$ in the flow for some $N>0$ large so that the geodesic curvature
\begin{align*}
k_{h}\in(\frac{9}{10},\frac{10}{9}).
\end{align*}
Now let $v=u-u_N$ on $\overline{M}\times[0,\infty)$, $\tilde{\phi}(x,t)=\phi(x,t)-u_N(x)=\phi(x,t)-N$ on $\partial M\times[0,\infty)$, and $v_0(x)=u_0(x)-u_N(x)$ on $\overline{M}$. Hence, we have
\begin{align}\label{equn_CDP2-2}
&v_t=e^{-2v}(\Delta_{h}v-K_{h})-1,\,\text{in}\,M\times [0,\infty)\,\\
&v=\tilde{\phi}(\cdot,t),\,\,\text{on}\,\partial M \times [0,\infty),\\
&\label{equn_CDP2-2-CD}v\big|_{t=0}=v_0,\,\,\text{in}\,M.
\end{align}
Now for $(\overline{M},h)$, in the neighborhood $U_{\delta_1}$ of the boundary for $\delta_1>0$ small so that $\partial M\times[0,\delta_1]$ is diffeomorphic to $U_{\delta_1}$ under the exponential map from the boundary along the direction of the inner normal vector fields, the metric has the orthogonal decomposition
\begin{align*}
h=dr^2+h_r=dr^2+f(r,s)ds^2,
\end{align*}
where $r(x)$ is the distance of $x$ to $\partial M$ in $(\overline{M},h)$, with $h_r$ the restriction of $h$ on $\Sigma_r=\{p\in M\big| r(p)=r\}$.  We extend $\tilde{\phi}$ to $U_{\delta_1}\times[T_3,\infty)$, so that $\tilde{\phi}(x,t)=\phi(t)-N$ for $(x,t)\in U_{\delta_1}\times[T_3,\infty)$. Now, for any constant $b>1$, we modify the test function $(\ref{equntf})$, and define
\begin{align*}
\overline{v}(x,t)=-\log(b r(x)+\varepsilon)-w(x)+\,e^{\phi(t)-N}r(x),
\end{align*}
where $\varepsilon=\varepsilon(t)=e^{-\tilde{\phi}}=e^{-\phi(t)+N}$ and $w(x)=A[(r+\delta)^{-p}-\delta^{-p}]\leq 0$ in $U_{\delta_1}$, with any $\delta>0$ small fixed, $A>1$ and $p>1$ to be determined. Then we have the computation
\begin{align*}
\frac{\partial \overline{v}}{\partial t}&=-\frac{\varepsilon'(t)}{b r(x)+\varepsilon(t)}+\,e^{\phi(t)-N}r(x)\phi'=\frac{\varepsilon\phi'(t)}{b r(x)+\varepsilon(t)}+\,e^{\phi(t)-N}r(x)\phi',\\
\Delta_h\overline{v}(x,t)&=\frac{\partial^2\overline{v}}{\partial r^2}+\frac{1}{2}f^{-1}\frac{\partial f}{\partial r}\frac{\partial \overline{v}}{\partial r}\\
&=\frac{b^2}{(b r(x)+\varepsilon(t))^2}- \frac{Ap(p+1)}{(r+\delta)^{p+2}}+\frac{1}{2}f^{-1}\frac{\partial f}{\partial r}[\frac{-b}{b r(x)+\varepsilon(t)}+ Ap(r(x)+\delta)^{-p-1}+\,e^{\phi(t)-N}],
\end{align*}
and hence,
\begin{align*}
e^{-2\overline{v}}[\Delta_h\overline{v}-K_h]-1&=e^{2w-2\,r\,e^{\phi-N}}[\frac{1}{2}f^{-1}\frac{\partial f}{\partial r}\big(-b(b r+\varepsilon)+ \frac{Ap(b r+\varepsilon)^2}{(r(x)+\delta)^{p+1}}+\,e^{\phi(t)-N}(b r+\varepsilon)^2\big)\\
&\,\,\,\,\,\,\,\,\,\,\,\,\,\,\,\,\,\,\,\,\,\,\,\,\,\,\,\,\,+b^2- \frac{Ap(p+1)(b r+\varepsilon)^2}{(r+\delta)^{p+2}}-K_h(b r+\varepsilon)^2]-1.
\end{align*}
 We require that
\begin{align*}
p+1> (\delta_1+\delta) \displaystyle\sup_{U_{\delta_1}}\big|f^{-1}\frac{\partial f}{\partial r}\big|+ (\delta_1+\delta)^2\sup_{\overline{M}}|K_h|,
\end{align*}
and hence,
\begin{align*}
e^{-2\overline{v}}[\Delta_h\overline{v}-K_h]-1&\leq e^{2w-2\,r\,e^{\phi-N}}[\frac{1}{2}f^{-1}\frac{\partial f}{\partial r}\big(-b(b r+\varepsilon)+\,e^{\phi(t)-N}(b r+\varepsilon)^2\big)+b^2]-1.
\end{align*}
Recall that
\begin{align*}
k_h=-\frac{1}{2}f^{-1}\frac{\partial f}{\partial r}\in(\frac{9}{10},\frac{10}{9})
 \end{align*}
 on $\partial M$, and by continuity, we choose $\delta_2\in(0,\delta_1)$ small so that
\begin{align*}
\frac{1}{2}f^{-1}\frac{\partial f}{\partial r}\in(-2,-\frac{1}{2})
\end{align*}
in $U_{\delta_2}=\{x\in U_{\delta_1}\big|r(x)\leq \delta_2\}$, and hence,
\begin{align*}
e^{-2\overline{v}}[\Delta_h\overline{v}-K_h]-1&\leq e^{2w-2\,r\,e^{\phi-N}}[-\frac{b}{2}(b r+\varepsilon)f^{-1}\frac{\partial f}{\partial r}+b^2]-1,
\end{align*}
for $(x,t)\in U_{\delta_2}\times[T_3,\infty)$. Since $\phi'(t)\to+\infty$ as $t\to\infty$, we choose $T_4\geq T_3$  large so that $\phi'(t)>0$ and
\begin{align*}
\varepsilon(t)=e^{-\phi(t)+N}\leq \frac{b}{4},
\end{align*}
for $(x,t)\in U_{\delta_2}\times[T_4,\infty)$. Also, let $\delta_2<\frac{1}{4}$. Therefore,
\begin{align*}
e^{-2\overline{v}}[\Delta_h\overline{v}-K_h]-1&\leq 2b^2 e^{2w-2\,r\,e^{\phi-N}}-1\leq 2b^2-1,
\end{align*}
for $(x,t)\in U_{\delta_2}\times[T_4,+\infty)$. On the other hand, for $0<r<b^{-1}e^{-\phi(t)+N}$, we have
\begin{align*}
\frac{\partial \overline{v}}{\partial t}\geq\frac{\varepsilon(t)\phi'(t)}{b r(x)+\varepsilon(t)}\geq \frac{1}{2}\phi'(t);
\end{align*}
 while for $b^{-1}e^{-\phi(t)+N} < r\leq \delta_2$,
\begin{align*}
\frac{\partial \overline{v}}{\partial t}\geq \,e^{\phi(t)-N}r\phi'(t)\geq b^{-1}\phi'(t).
\end{align*}
Recall that $\phi'(t)\to+\infty$ as $t\to\infty$. Hence, we choose $T_5>T_4$ to be large so that
\begin{align*}
\frac{\partial \overline{v}}{\partial t}\geq e^{-2\overline{v}}[\Delta_h\overline{v}-K_h]-1,
\end{align*}
for $(x,t)\in U_{\delta_2}\times[T_5,\infty)$. Moreover, we require $T_5$ be large so that $v$ is sufficiently close to the Loewner-Nirenberg solution $v_{LN}=u_{LN}-u_N$ on $(\overline{M},h)$, on $\Sigma_{\delta_2}=\{x\in U_{\delta_2}\big|r(x)=\delta_2\}$ for $t\geq T_5$. Then as in the proof of Theorem \ref{thm_asymptbdlowspeed1}, we choose $A>1$ and $p>1$ large so that $\overline{v}\geq v$ on $\Sigma_{\delta_2}\times[T_5,\infty) \bigcup U_{\delta_2}\times\{T_5\}$. Then, we apply Lemma \ref{lem_comparisonCDP1} on $U_{\delta_2}\times [T_5,\infty)$ to have that
\begin{align*}
\overline{v}\geq v,
\end{align*}
on $U_{\delta_2}\times [T_5,\infty)$. Since $\overline{v}= v$ on $\partial M\times [T_5,\infty)$, we have
\begin{align*}
k_{e^{2u}g}=k_{e^{2v}h}=e^{-\tilde{\phi}}[\frac{\partial v}{\partial n_h}+k_{h}]&\geq e^{-\tilde{\phi}}[\frac{\partial \overline{v}}{\partial n_h}+k_h]\\
&=e^{-\phi(t)+N}[-\frac{\partial \overline{v}}{\partial r}\big|_{r=0}+k_h]=b-1+[k_h-Ap\delta^{-p-1}]e^{-\phi(t)+N},
\end{align*}
on $\partial M\times[T_5,\infty)$, where $n_h$ is the outer normal vector fields on $\partial M$ on $(\overline{M},h)$. Therefore, let $b\to+\infty$, we have that
\begin{align*}
k_{e^{2u}g}\to +\infty,
\end{align*}
uniformly on $\partial M$ as $t\to\infty$.

\end{proof}
Now we are ready to estimate the geodesic curvature on the boundary when the Dirichlet boundary data grows even faster.
\begin{thm}\label{thm_asymptbdfastspeed2}
Let $(\overline{M},g)$ be a compact surface with its interior $M$ and boundary $\partial M$. Under the condition of Theorem \ref{thm_CDPconvergnl}, we assume that there exists $T_3>0$ such that $\phi$ depends only on $t$, i.e., $\phi=\phi(t)$, and
\begin{align}\label{inequn_fastgrowthupperbound}
\phi'(t)\geq 3\phi(t)+1,
\end{align}
for $t\geq T_3$. Then the solution $u(x,t)$ obtained in Theorem \ref{thm_CDPconvergnl}, which converges locally uniformly in $C^2$ to $u_{LN}$ on $M$, satisfies that the geodesic curvature on $\partial M$ has the lower bound estimate
\begin{align*}
k_{e^{2u}g}=e^{-u}(\frac{\partial u}{\partial n_g}+k_g)\geq \phi^{\frac{1}{3}}-1-C\,e^{-\phi(t)},
\end{align*}
on $\partial M\times[T_4,\infty)$ for some constant $C>0$ and some $T_4\geq T_3$, where $n_g$ is the outer normal vector fields on $\partial M$ on $(\overline{M},g)$.
\end{thm}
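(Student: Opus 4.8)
The plan is to construct an explicit upper barrier for $u$ on a collar of $\partial M$, in the spirit of the proof of Theorem \ref{thm_asymptbdfastspeed1}, but with the leading coefficient of the barrier tuned to the growth hypothesis $\phi'\ge 3\phi+1$. I would work with $g$ directly (as in Theorems \ref{thm_asymptbdlowspeed1} and \ref{thm_lowspeedCDdataperturb}; alternatively one may first pass to a background $h=e^{2u_N}g$ with $k_h$ near $1$ as in Theorem \ref{thm_asymptbdfastspeed1}, which leaves the intrinsic quantity $k_{e^{2u}g}$ and the final estimate unchanged), use Fermi coordinates $g=dr^2+f(r,s)ds^2$ on the collar $U_{\delta_1}$, and extend $\phi$ to depend only on $t$ there. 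Setting $\varepsilon(t)=e^{-\phi(t)}$, for small $\delta>0$ and large $A,p>1$ to be chosen later I define on $U_{\delta_1}\times[T_3,\infty)$
\begin{align*}
\overline{u}(x,t)=-\log\big(\phi(t)^{1/3}r(x)+\varepsilon(t)\big)-w(x)+e^{\phi(t)}r(x),\qquad w(x)=A\big((r(x)+\delta)^{-p}-\delta^{-p}\big)\le 0 .
\end{align*}
The three summands play the roles they had in Theorem \ref{thm_asymptbdfastspeed1}: the logarithm carries the correct boundary value ($\overline{u}=\phi=u$ on $\partial M$) and supplies the leading term $\phi^{1/3}$ in the geodesic curvature; the linear term $e^{\phi}r$ forces $e^{-2\overline{u}}$ to decay at rate $\sim e^{\phi}$ in $r$, so that in $e^{-2\overline{u}}(\Delta_g\overline{u}-K_g)$ every contribution except $e^{-2\overline{u}}\overline{u}_{rr}\approx\phi^{2/3}$ is $o(1)$ as $t\to\infty$; and $-w\ge 0$ is the standard cutoff making $\overline{u}$ large on the inner face $\Sigma_{\delta_2}$ and on the slice $\{t=T_4\}$. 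The coefficient $e^{\phi}$ of the linear term is chosen so that after multiplication by $e^{-\phi}$ it contributes exactly the $-1$ of the claimed bound.

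The core step is to verify that $\overline{u}$ is a supersolution of $u_t=e^{-2u}(\Delta_g u-K_g)-1$ on $U_{\delta_2}\times[T_4,\infty)$ for suitable small $\delta_2$ and large $T_4$. Substituting $r=s\,e^{-\phi}$ and using $e^{\phi}\varepsilon=1$, a direct computation gives, for $\phi$ large and uniformly in $s\ge 0$,
\begin{align*}
\overline{u}_t\ \ge\ \tfrac12\,\phi'(t)\left(s+\frac{1}{\phi^{1/3}s+1}\right),\qquad e^{-2\overline{u}}(\Delta_g\overline{u}-K_g)-1\ \le\ \phi^{2/3}e^{-2s}+o(1)-1 ,
\end{align*}
where in the second inequality one checks that the $-(cr+\varepsilon)^2w''$ part of $\overline{u}_{rr}$ has the favourable sign and that every remaining term carries a factor $(cr+\varepsilon)$, hence is $o(1)$ because $e^{-2dr}$ with $d=e^{\phi}$ dominates. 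Since $s+\frac{1}{\phi^{1/3}s+1}\ge 2\phi^{-1/6}-\phi^{-1/3}\ge\phi^{-1/6}$ by an AM--GM estimate and $\phi'\ge 3\phi+1\ge 3\phi$, the left side is $\ge \tfrac32\phi^{5/6}$, which beats $\phi^{2/3}+o(1)-1$ once $\phi$ is large. This is exactly where the exponent $1/3$ is pinned down: with barrier coefficient $\phi^{\beta}$ one needs $\overline{u}_t\gtrsim\phi'/\phi^{\beta}$ to dominate $e^{-2\overline{u}}\overline{u}_{rr}\sim\phi^{2\beta}$, i.e.\ $\phi'\gtrsim\phi^{3\beta}$, so $\beta=\tfrac13$ is the threshold matched by the hypothesis. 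I expect this verification---keeping honest track of the $A,p,\delta$-dependent lower-order terms and confirming that they are genuinely $o(1)$ uniformly in $s$---to be the main technical obstacle; everything else is routine bookkeeping (for instance $\phi\to\infty$, guaranteed by the hypotheses of Theorem \ref{thm_CDPconvergnl}, makes $\phi^{1/3}$ and $e^{\phi}$ smooth and positive for $t\ge T_3$ large).

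With the supersolution in hand I would finish exactly as in Theorem \ref{thm_asymptbdfastspeed1}: fix $\delta,p$ from the supersolution estimate, then choose $A$ large so that $\overline{u}\ge u$ on $\Sigma_{\delta_2}\times[T_4,\infty)$ and on $U_{\delta_2}\times\{T_4\}$---here one uses $-w\ge 0$ large off the boundary, the blow-up of the $e^{\phi}r$ term, and the local $C^2$-convergence $u\to u_{LN}$ from Theorem \ref{thm_CDPconvergnl} on $\Sigma_{\delta_2}$---enlarging $T_4$ if necessary. Applying the comparison Lemma \ref{lem_comparisonCDP1} on $U_{\delta_2}\times[T_4,\infty)$ yields $u\le\overline{u}$ there, with equality on $\partial M$, hence $\partial_{n_g}u=-\partial_r u|_{r=0}\ge -\partial_r\overline{u}|_{r=0}$ on $\partial M\times[T_4,\infty)$. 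Since
\begin{align*}
-\partial_r\overline{u}\big|_{r=0}=\phi^{1/3}e^{\phi}-e^{\phi}-Ap\delta^{-p-1}=(\phi^{1/3}-1)e^{\phi}-Ap\delta^{-p-1},
\end{align*}
this gives
\begin{align*}
k_{e^{2u}g}=e^{-\phi}\big(\partial_{n_g}u+k_g\big)\ \ge\ (\phi^{1/3}-1)+e^{-\phi}\big(k_g-Ap\delta^{-p-1}\big)\ \ge\ \phi^{1/3}-1-C\,e^{-\phi}
\end{align*}
on $\partial M\times[T_4,\infty)$, with $C=\sup_{\partial M}|k_g|+Ap\delta^{-p-1}>0$, which is the assertion.
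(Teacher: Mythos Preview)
Your proposal is correct and follows essentially the same route as the paper. The barrier
\[
\overline{u}=-\log\big(\phi^{1/3}r+\varepsilon\big)-w+e^{\phi}r
\]
is exactly the paper's test function (the paper first passes to the background $h=e^{2u_N}g$ with $k_h\in(\tfrac{9}{10},\tfrac{10}{9})$, which shifts $\phi$ by $N$ but leaves the structure unchanged), and the comparison via Lemma \ref{lem_comparisonCDP1} together with the final normal-derivative computation are identical.

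The only notable differences are organizational: the paper bounds $e^{-2\overline{v}}[\Delta_h\overline{v}-K_h]-1\le 2\phi^{2/3}-1$ by exploiting the sign of $\tfrac12 f^{-1}f_r$ in the normalized background, and then estimates $\overline{v}_t$ by splitting into the two ranges $r\lessgtr \phi^{-1/3}e^{-\phi+N}$; you instead keep the $e^{-2s}$ factor (with $s=re^{\phi}$), show all cross terms are $o(1)$ uniformly in $s$, and use the AM--GM bound $s+\tfrac{1}{\phi^{1/3}s+1}\ge 2\phi^{-1/6}-\phi^{-1/3}$. Both lead to $\overline{u}_t\gtrsim\phi'\phi^{-1/3}\ge 3\phi^{2/3}$ dominating the right-hand side. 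One bookkeeping point worth making explicit in your write-up: choose $p$ large first (so that the $-w''$ term dominates the $\tfrac12 f^{-1}f_r\cdot(-w')$ term, giving an $A$-independent upper bound for the right-hand side), and only afterwards fix $T_4$ and then $A$ large; otherwise the ``$o(1)$'' could in principle depend on $A$.
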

\begin{proof}
As in the proof of Theorem \ref{thm_asymptbdfastspeed1}, since the form of the flow is conformally invariant, we assume the background metric $h=e^{2u_N}g$ to have the geodesic curvature $k_h\in(\frac{9}{10},\frac{10}{9})$. The same as the proof of Theorem \ref{thm_asymptbdfastspeed1}, we let $v=u-u_N$ on $\overline{M}\times[0,\infty)$, $\tilde{\phi}(x,t)=\phi(x,t)-N$ on $\partial M\times[0,\infty)$, and $v_0(x)=u_0(x)-u_N(x)$ on $\overline{M}$ and hence, $v$ satisfies $(\ref{equn_CDP2-2})-(\ref{equn_CDP2-2-CD})$; moreover, in the neighborhood $U_{\delta_1}$ of $\partial M\subseteq (\overline{M},h)$ for $\delta_1>0$ small so that $\partial M\times[0,\delta_1]$ is diffeomorphic to $U_{\delta_1}$ under the exponential map from the boundary along the direction of the inner normal vector fields, the metric has the orthogonal decomposition
\begin{align*}
h=dr^2+h_r=dr^2+f(r,s)ds^2,
\end{align*}
where $r(x)$ is the distance of $x$ to $\partial M$ in $(\overline{M},h)$, with $h_r$ the restriction of $h$ on $\Sigma_r=\{p\in M\big| r(p)=r\}$. We extend $\tilde{\phi}$ to $U_{\delta_1}\times[T_3,\infty)$, so that $\tilde{\phi}(x,t)=\phi(t)-N$ for $(x,t)\in U_{\delta_1}\times[T_3,\infty)$. Then we choose the test function $\overline{v}$ on $U_{\delta_1}\times[T_3,\infty)$ such that
\begin{align*}
\overline{v}(x,t)=-\log(b(t) r(x)+\varepsilon(t))-w(x)+\,e^{\phi(t)-N}r(x),
\end{align*}
where $r(x)$ is the distance of $x$ to $\partial M$ in $(\overline{M},g)$,  $b(t)=\phi^{\frac{1}{3}}(t)$, $\varepsilon(t)=e^{-\phi+N}$ and $w(x)=A[(r+\delta)^{-p}-\delta^{-p}]\leq 0$ in $U_{\delta_1}$, with any $\delta>0$ small fixed, $A>1$ and $p>1$ to be determined.  Then we have the computation
\begin{align*}
\frac{\partial \overline{v}}{\partial t}&=-\frac{b'(t)r(x)+\varepsilon'(t)}{b r(x)+\varepsilon(t)}+\,e^{\phi(t)-N}r(x)\phi'=\frac{-\frac{1}{3}r(x)\phi^{\frac{-2}{3}}\phi'+\varepsilon(t)\phi'(t)}{b r(x)+\varepsilon(t)}+\,e^{\phi(t)-N}r(x)\phi',\\
\Delta_h\overline{v}(x,t)&=\frac{\partial^2\overline{v}}{\partial r^2}+\frac{1}{2}f^{-1}\frac{\partial f}{\partial r}\frac{\partial \overline{v}}{\partial r}\\
&=\frac{b^2}{(b r(x)+\varepsilon(t))^2}- \frac{Ap(p+1)}{(r+\delta)^{p+2}}+\frac{1}{2}f^{-1}\frac{\partial f}{\partial r}[\frac{-b}{b r(x)+\varepsilon(t)}+ Ap(r(x)+\delta)^{-p-1}+\,e^{\phi(t)-N}],
\end{align*}
and hence,
\begin{align*}
e^{-2\overline{v}}[\Delta_h\overline{v}-K_h]-1&=e^{2w-2\,r\,e^{\phi-N}}[\frac{1}{2}f^{-1}\frac{\partial f}{\partial r}\big(-b(b r+\varepsilon)+ \frac{Ap(b r+\varepsilon)^2}{(r(x)+\delta)^{p+1}}+\,e^{\phi(t)-N}(b r+\varepsilon)^2\big)\\
&\,\,\,\,\,\,\,\,\,\,\,\,\,\,\,\,\,\,\,\,\,\,\,\,\,\,\,\,\,\,\,\,+b^2- \frac{Ap(p+1)(b r+\varepsilon)^2}{(r+\delta)^{p+2}}-K_h(b r+\varepsilon)^2]-1.
\end{align*}
We require that
\begin{align*}
p+1> (\delta_1+\delta) \displaystyle\sup_{U_{\delta_1}}\big|f^{-1}\frac{\partial f}{\partial r}\big|+ (\delta_1+\delta)^2\sup_{\overline{M}}|K_h|,
\end{align*}
and hence,
\begin{align*}
e^{-2\overline{v}}[\Delta_h\overline{v}-K_h]-1&\leq e^{2w-2\,r\,e^{\phi-N}}[\frac{1}{2}f^{-1}\frac{\partial f}{\partial r}\big(-b(b r+\varepsilon)+\,e^{\phi(t)-N}(b r+\varepsilon)^2\big)+b^2]-1.
\end{align*}
Then the same as in the proof of Theorem \ref{thm_asymptbdfastspeed1}, since $\phi(t)\to+\infty$ as $t\to\infty$, and by the choice that
 \begin{align*}
k_h=-\frac{1}{2}f^{-1}\frac{\partial f}{\partial r}\in(\frac{9}{10},\frac{10}{9})
 \end{align*}
 on $\partial M$, we can choose $\delta_2\in(0,\delta_1)$ small with $\delta_2<\frac{1}{4}$ and $T_4>T_3$ so that $\phi'(t)>0$ for $t\geq T_4$, and also,
\begin{align*}
\frac{1}{2}f^{-1}\frac{\partial f}{\partial r}\in(-2,-\frac{1}{2})
\end{align*}
in $U_{\delta_2}=\{x\in U_{\delta_1}\big|r(x)\leq \delta_2\}$, and hence,
\begin{align*}
e^{-2\overline{v}}[\Delta_h\overline{v}-K_h]-1&\leq 2b^2 e^{2w-2\,r\,e^{\phi-N}}-1\leq 2b^2-1=2\phi^{\frac{2}{3}}-1,
\end{align*}
for $(x,t)\in U_{\delta_2}\times[T_4,+\infty)$.  On the other hand, for $0<r<b(t)^{-1}e^{-\phi(t)+N}$, we have
\begin{align*}
\frac{\partial \overline{v}}{\partial t}\,&\geq\,\frac{-\frac{1}{3}r(x)\phi^{\frac{-2}{3}}\phi'+\varepsilon(t)\phi'(t)}{b(t) r(x)+\varepsilon(t)}\\
&\geq-\frac{\phi'}{3\phi}+\frac{\phi'}{1+bre^{\phi-N}}\geq -\frac{\phi'}{3\phi}+\frac{\phi'}{2}\\
&=(\frac{1}{2}-\frac{1}{3\phi})\phi',
\end{align*}
while for $b^{-1}e^{-\phi(t)+N} < r\leq \delta_2$,
\begin{align*}
\frac{\partial \overline{v}}{\partial t}&\geq \frac{-\frac{1}{3}r\phi^{\frac{-2}{3}}\phi'}{b r+\varepsilon(t)}+\,e^{\phi(t)-N}r(x)\phi'\\
&\geq \frac{-\phi'}{3\phi}+\,e^{\phi(t)-N}r(x)\phi'\,\geq\,\frac{-\phi'}{3\phi}+\,\phi^{-\frac{1}{3}}\phi'.
\end{align*}
Since $\phi(t)\to +\infty$, by $(\ref{inequn_fastgrowthupperbound})$, we can choose $T_4>T_3$ sufficiently large so that for $(x,t)\in U_{\delta_2}\times[T_4,+\infty)$, we have that
\begin{align*}
\overline{v}_t\geq e^{-2\overline{v}}(\Delta_g\overline{v}-K_h)-1.
\end{align*}
Moreover, we require $T_4$ be large so that $v$ is sufficiently close to $v_{LN}=u_{LN}-u_N$ on $\Sigma_{\delta_2}=\{x\in U_{\delta_2}\big|r(x)=\delta_2\}$ for $t\geq T_4$, and hence, $v$ in uniformly bounded in $\Sigma_{\delta_2}\times[T_4,+\infty)$. On the other hand, we have that
\begin{align*}
-\log(b(t) \delta_2+\varepsilon(t))+\,e^{\phi(t)-N}\delta_2\geq-\log(2\phi^{\frac{1}{3}}(t)\delta_2)+e^{\phi(t)-N}\delta_2>0
\end{align*}
for $t\geq T_4$ for $T_4>T_3$ sufficiently large. Therefore, as in the proof of Theorem \ref{thm_asymptbdlowspeed1}, we choose $A>1$ and $p>1$ large so that $\overline{v}\geq v$ on $\Sigma_{\delta_2}\times[T_4,\infty) \bigcup U_{\delta_2}\times\{T_4\}$. Then, we apply Lemma \ref{lem_comparisonCDP1} on $U_{\delta_2}\times [T_4,\infty)$ to have that
\begin{align*}
\overline{v}\geq v,
\end{align*}
on $U_{\delta_2}\times [T_4,\infty)$. Since $\overline{v}= v$ on $\partial M\times [T_4,\infty)$, we have
\begin{align*}
k_{e^{2u}g}=k_{e^{2v}h}=e^{-\phi(t)+N}[\frac{\partial v}{\partial n_h}+k_h]&\geq e^{-\phi(t)+N}[\frac{\partial \overline{v}}{\partial n_h}+k_h]\\
&=e^{-\phi(t)+N}[-\frac{\partial \overline{v}}{\partial r}\big|_{r=0}+k_h]=b(t)-1+[k_h-Ap\delta^{-p-1}]e^{-\phi(t)+N}\\
&=\phi^{\frac{1}{3}}-1+[k_h-Ap\delta^{-p-1}]e^{-\phi(t)+N},
\end{align*}
on $\partial M\times[T_4,\infty)$, where $n_h$ is the outer normal vector fields on $\partial M$ on $(\overline{M},h)$.

\end{proof}

\section{Convergence of the solution to the boundary value problem $(\ref{equn_RF3-1})-(\ref{equn_ibp3-1})$}\label{section_convergenceibpgc}

\begin{thm}\label{thm_convergCO1}
Let $(\overline{M},g)$ be a compact surface with its interior $M$ and boundary $\partial M$. Assume that $u_0\in C^{2+\alpha}(\overline{M})$ and $\psi\in C^{1+\alpha,\frac{1}{2}+\frac{\alpha}{2}}(\partial M\times[0,T])$ for all $T>0$, and also the compatibility condition holds on $\partial M\times\{0\}$:
\begin{align*}
\frac{\partial}{\partial n_g}u_0+k_{g}=\psi(\cdot,0) e^{u_0}.
\end{align*}
Suppose there exist two groups of Cauchy-Dirichlet data $(u_{01},\phi_{01})$ and $(u_{02},\phi_{02})$ satisfying the condition of Theorem \ref{thm_CDPconvergnl} such that the corresponding solutions $u_1$ and $u_2$ to the Cauchy-Dirichlet problem $(\ref{equn_RF3-2})-(\ref{equn_ibp3-2})$ satisfy that
\begin{align*}
&u_{10}\leq u_0\leq u_{20}\,\,\,\,\text{on}\,\,\overline{M},\\
&k_{e^{2u_1}g}\leq \psi \leq k_{e^{2u_2}g}\,\,\,\text{on}\,\,\partial M\times[0,\infty).
\end{align*}
Then there exists a unique solution $u\in C^{2+\alpha,1+\frac{\alpha}{2}}(\overline{M}\times[0,T])$ to the initial-boundary value problem $(\ref{equn_RF3-1})-(\ref{equn_ibp3-1})$ for all $T>0$, and moreover, $u$ converges locally uniformly in $C^2$ to the Loewner-Nirenberg solution $u_{LN}$ on $M$.
\end{thm}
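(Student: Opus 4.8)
The plan is to recognize the two Cauchy--Dirichlet solutions $u_1$ and $u_2$ as a sub- and a supersolution of the Robin initial-boundary value problem $(\ref{equn_RF3-1})-(\ref{equn_ibp3-1})$ with boundary data $\psi$, to squeeze the solution $u$ between them by the comparison principle, and then to transfer long-time existence and convergence from $u_1$ and $u_2$ to $u$. First I would invoke the conformal invariance of the flow to replace $g$ by a conformal metric $h$ with $K_h=0$, as in Section \ref{section_comparisonthm}, so that $(\ref{equn_RF3-1})-(\ref{equn_ibp3-1})$ becomes $(\ref{equn_RF3-3})-(\ref{equn_ibp3-3})$; after subtracting the fixed smooth conformal factor, $u_1$ and $u_2$ remain solutions of the Cauchy--Dirichlet problem $(\ref{equn_RF3-2-1})-(\ref{equn_ibp3-2-1})$ to which Theorem \ref{thm_CDPconvergnl} (equivalently Theorem \ref{thm_Flow1convergence}) applies, while $u_{LN}$ solves $\Delta_h v=e^{2v}$ in $M$ with $v\to+\infty$ at $\partial M$. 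Thus $u_1,u_2\in C^{2+\alpha,1+\frac{\alpha}{2}}(\overline{M}\times[0,T])$ for all $T$, they exist for all $t\ge0$, and each converges locally uniformly in $C^2$ to $u_{LN}$ in $M$. Short-time existence and uniqueness of $u\in C^{2+\alpha,1+\frac{\alpha}{2}}(\overline{M}\times[0,\epsilon])$ of $(\ref{equn_RF3-3})-(\ref{equn_ibp3-3})$ is standard under the compatibility condition $(\ref{equn_compatibility2-1})$, as recalled after Theorem \ref{thm_convergCO3}; let $[0,T^*)$ be the maximal existence interval.

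The key step is that $u_1$ is a subsolution of $(\ref{equn_RF3-3})-(\ref{equn_ibp3-3})$ in the sense of Remark \ref{remark_comparison}: it solves the evolution equation $u_t=e^{-2u}\Delta_hu-1$ with equality, on $\partial M\times[0,\infty)$ it satisfies
\begin{align*}
\frac{\partial u_1}{\partial n_h}+k_h=e^{u_1}k_{e^{2u_1}g}\le e^{u_1}\psi
\end{align*}
since $k_{e^{2u_1}g}\le\psi$ and $e^{u_1}>0$, and $u_1(\cdot,0)=u_{10}\le u_0$; so it is a subsolution with $\psi_1=k_{e^{2u_1}g}$, $\psi_2=\psi$. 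Symmetrically, $\psi\le k_{e^{2u_2}g}$ and $u_0\le u_{20}$ make $u_2$ a supersolution with $\psi_1=\psi$, $\psi_2=k_{e^{2u_2}g}$. Applying Remark \ref{remark_comparison} to the pair $(u_1,u)$ and then to the pair $(u,u_2)$ gives
\begin{align*}
u_1\le u\le u_2\qquad\text{on}\ \ \overline{M}\times[0,T^*),
\end{align*}
and uniqueness of $u$ follows by applying the same comparison to any two solutions with data $(u_0,\psi)$.

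This sandwich drives the long-time existence argument, which I expect to be the only genuinely delicate point. For each $T<T^*$ the continuity of $u_1,u_2$ on $\overline{M}\times[0,T]$ and $u_1\le u\le u_2$ give $\sup_{\overline{M}\times[0,T]}|u|\le C$ with $C$ depending only on $u_1,u_2$; hence $e^{-2u}$ is pinched between positive constants, the equation stays uniformly parabolic along the flow, and the oblique boundary operator $\partial_{n_h}u-\psi e^{u}$ has data controlled by $\psi\in C^{1+\alpha,\frac12+\frac{\alpha}{2}}$ and by the bound on $u$. The subtlety is that the Robin coefficient $-\psi e^{u}$ carries no favourable sign; but, unlike in the maximum-principle argument of Theorem \ref{thm_comparison2} where this forced the weight $\eta=e^{-N\tau}$, the sign is irrelevant for a priori Hölder and Schauder estimates. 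So De Giorgi--Nash--Moser followed by the parabolic Schauder estimates up to the boundary for quasilinear equations with oblique boundary condition \cite{LSU}\cite{Lieberman} bound $\|u\|_{C^{2+\alpha,1+\frac{\alpha}{2}}(\overline{M}\times[0,T])}$ for every $T<T^*$, so the solution cannot degenerate at $T^*$ and $T^*=+\infty$; consequently $u_1\le u\le u_2$ on $\overline{M}\times[0,\infty)$.

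Finally, for convergence fix a compact $K\subset M$. Since $u_1,u_2\to u_{LN}$ uniformly on $K$ by Theorem \ref{thm_CDPconvergnl} and $u_1\le u\le u_2$, we get $\sup_K|u(\cdot,t)-u_{LN}|\le\max\!\big(\sup_K|u_1(\cdot,t)-u_{LN}|,\ \sup_K|u_2(\cdot,t)-u_{LN}|\big)\to0$, i.e.\ $u\to u_{LN}$ locally uniformly in $M$. To upgrade to $C^2$, pick $K'$ with $K$ contained in the interior of $K'$ and $K'\subset M$; the sandwich bounds $u$ uniformly in $t$ on $K'\times[t,t+2]$, so the interior parabolic Schauder estimates bound $\|u\|_{C^{2+\alpha,1+\frac{\alpha}{2}}(K\times[t+1,t+2])}$ uniformly in $t$, and together with the $C^0$ convergence and interpolation this forces $u(\cdot,t)\to u_{LN}$ in $C^2(K)$. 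Reversing the conformal change then completes the proof.
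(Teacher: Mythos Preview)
Your proposal is correct and follows essentially the same route as the paper: short-time existence from the compatibility condition, the sandwich $u_1\le u\le u_2$ from the comparison principle, long-time existence from the resulting uniform $C^0$ bounds plus parabolic Schauder theory, and local $C^2$ convergence by squeezing and interior estimates. The only cosmetic difference is that the paper applies Theorem \ref{thm_comparison2} directly---observing that $u_1$ and $u_2$ are themselves \emph{solutions} of the Robin problem $(\ref{equn_RF3-3i})-(\ref{equn_ibp3-3i})$ with boundary data $\psi_i=k_{e^{2u_i}g}$, so the ordering $k_{e^{2u_1}g}\le\psi\le k_{e^{2u_2}g}$ feeds straight into that theorem---whereas you route the same comparison through the sub/supersolution formulation of Remark \ref{remark_comparison}; your added detail on the conformal reduction and the $C^2$ upgrade is fine and only makes explicit what the paper leaves terse.
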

\begin{proof}
The problem $(\ref{equn_RF3-1})-(\ref{equn_ibp3-1})$ is a quasilinear parabolic equation with Robin boundary condition, and at $t=0$, and the equation is strictly parabolic. With the compatibility condition in the theorem, by the Inverse Function Theorem and standard methods from the parabolic equation \cite{LSU}\cite{Lieberman}, there exists $\epsilon>0$ such that there exists a unique solution $u\in C^{2+\alpha, 1+\frac{\alpha}{2}}(\overline{M}\times[0,\epsilon])$ to $(\ref{equn_RF3-1})-(\ref{equn_ibp3-1})$. Let $T_0>0$ be the largest exists time of $u$. Then by Theorem \ref{thm_comparison2}, we have
\begin{align}\label{inequn_ODcontrolledbyCD}
u_1\leq u\leq u_2,
\end{align}
for $(x,t)\in \overline{M}\times [0,T]$ for any $T<T_0$ and hence, by standard regularity theory of second order parabolic equations, we have that there exists $C=C(T_0)>0$ independent of $T$ such that
\begin{align*}
\|u\|_{C^{2+\alpha,1+\frac{\alpha}{2}}(\overline{M}\times [0,T])}\leq C.
\end{align*}
Therefore, $u$ exists for all $t>0$, and $u\in C^{2+\alpha, 1+\frac{\alpha}{2}}(\overline{M}\times[0,T])$ satisfies $(\ref{inequn_ODcontrolledbyCD})$ for all $T>0$. By Theorem \ref{thm_CDPconvergnl}, $u_1$ and $u_2$ converge to $u_{LN}$ locally uniformly in $C^2$ sense in $M$ and hence, $u$ converges to $u_{LN}$ locally uniformly in $M$. Then by standard interior estimates of second order parabolic equations, $u$ converges to $u_{LN}$ locally uniformly in $C^2$ sense in $M$. This completes the proof of the theorem.
\end{proof}
Now we have the following long time existence and convergence result.
\begin{thm}\label{thm_convergCO2}
Let $(\overline{M},g)$ be a compact surface with its interior $M$ and boundary $\partial M$. Assume that $u_{01}\in C^{2+\alpha}(\overline{M})$ and $\phi_{01}\in C^{2+\alpha,1+\frac{\alpha}{2}}(\partial M\times[0,T])$ for all $T>0$ satisfy the condition in Theorem \ref{thm_CDPconvergnl}. Moreover, we assume that $u_{01}<u_{LN}$ on $M$.  Let $u_1$ be the solution to the Cauchy-Dirichlet problem $(\ref{equn_RF3-2})-(\ref{equn_ibp3-2})$ with the initial and boundary data $u_{01}$ and $\phi_{01}$. We assume that $u_0\in C^{2+\alpha}(\overline{M})$ and $\psi\in C^{1+\alpha,\frac{1}{2}+\frac{\alpha}{2}}(\partial M\times[0,T])$ for all $T>0$, and also the compatibility condition holds on $\partial M\times\{0\}$:
\begin{align*}
\frac{\partial}{\partial n_g}u_0+k_{g}=\psi(\cdot,0) e^{u_0}.
\end{align*}
Suppose $u_0$ and $\psi$ satisfy the following:
\begin{align*}
&u_{10}\leq u_0< u_{LN}\,\,\,\,\text{on}\,\,\overline{M},\\
&k_{e^{2u_1}g}\leq \psi \leq y(t)^{\frac{1}{3}}-2,\,\,\,\text{on}\,\,\partial M\times[0,\infty).
\end{align*}
where $y(t)\in C^3([0,\infty))$ is a positive function satisfying $y'\geq 3y+1$ for $t\in[0,\infty)$.  Then there exists a unique solution $u\in C^{2+\alpha,1+\frac{\alpha}{2}}(\overline{M}\times[0,T])$ to the initial-boundary value problem $(\ref{equn_RF3-1})-(\ref{equn_ibp3-1})$ for all $T>0$, and moreover, $u$ converges locally uniformly in $C^2$ to the Loewner-Nirenberg solution $u_{LN}$ on $M$.
\end{thm}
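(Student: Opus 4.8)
The plan is to deduce the statement from Theorem~\ref{thm_convergCO1}. That theorem already accepts the given pair $(u_{01},\phi_{01})$ as the lower Cauchy--Dirichlet barrier, since $u_{10}\le u_0$ and $k_{e^{2u_1}g}\le\psi$ hold by hypothesis; so the whole task is to produce a second Cauchy--Dirichlet pair $(u_{02},\phi_{02})$ satisfying the hypotheses of Theorem~\ref{thm_CDPconvergnl} --- so that its solution $u_2$ exists for all time and converges locally uniformly in $C^2$ to $u_{LN}$ in $M$ --- and for which
\begin{align*}
u_0\le u_{02}\ \text{on}\ \overline M,\qquad \psi\le k_{e^{2u_2}g}\ \text{on}\ \partial M\times[0,\infty).
\end{align*}
Given such a pair, Theorem~\ref{thm_convergCO1} applied to $(u_{01},\phi_{01})$ and $(u_{02},\phi_{02})$ yields the long--time existence, the sandwich $u_1\le u\le u_2$ via Theorem~\ref{thm_comparison2}, and the locally uniform $C^2$ convergence $u\to u_{LN}$. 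The strict inequalities $u_0<u_{LN}$ and $u_{01}<u_{LN}$ enter only to leave room to choose $u_{02}$ with $u_0\le u_{02}<u_{LN}$, so that comparison with the stationary supersolution $u_{LN}$ also forces $u\le u_2\le u_{LN}$ throughout.

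For the boundary datum I would take a smooth positive $\phi_{02}=\phi_{02}(t)$ with $\phi_{02}(t)=y(t)$ for $t\ge T_3$ (where $T_3>0$ is to be fixed large), so that $\phi_{02}'\ge 3\phi_{02}+1>0$ on $[T_3,\infty)$, and interpolating on $[0,T_3]$ with $\phi_{02}'$ bounded below by a large constant and with the first--order compatibility $(\ref{equn_CDPcompatcd1})$--$(\ref{equn_CDPcompatcd1-1})$ holding at $t=0$. Since $y'\ge 3y+1$ forces $y$ to grow at least exponentially, $\phi_{02}\to\infty$ and $\phi_{02}(t)\ge\log\xi(t)$ for $t$ large with, say, $\xi(t)=1+\log(1+t)$ --- a low--speed increasing function satisfying $(\ref{inequn_lowspeedinc})$; hence $(u_{02},\phi_{02})$ meets all the hypotheses of Theorem~\ref{thm_CDPconvergnl}, so $u_2$ exists globally and $u_2\to u_{LN}$ locally uniformly in $C^2$ in $M$. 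For the initial datum I would take any $u_{02}\in C^{2+\alpha}(\overline M)$ with $u_0\le u_{02}<u_{LN}$ on $\overline M$ and $u_{02}|_{\partial M}=\phi_{02}(0)$; the remaining freedom in the $2$-jet of $u_{02}$ along $\partial M$ lets me satisfy $(\ref{equn_CDPcompatcd1-1})$ and, moreover, prescribe $\phi_{02}(0)$ and the normal derivative $\frac{\partial u_{02}}{\partial n_g}$ on $\partial M$ to be as large as needed.

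It remains to check $\psi\le k_{e^{2u_2}g}$ on all of $\partial M\times[0,\infty)$. For $t$ large this is Theorem~\ref{thm_asymptbdfastspeed2}: since $\phi_{02}=y$ and $y'\ge 3y+1$ on $[T_3,\infty)$, it gives $k_{e^{2u_2}g}\ge y^{1/3}-1-Ce^{-y}$ on $\partial M\times[T_4,\infty)$ for some $C>0$ and $T_4\ge T_3$, and since $y(t)\to\infty$ we get $Ce^{-y(t)}\le 1$ eventually, so $k_{e^{2u_2}g}\ge y^{1/3}-2\ge\psi$ for $t\ge T_*$ with $T_*\ge T_4$ large. On the compact interval $[0,T_*]$, where $\psi\le M_0:=\sup_{\partial M\times[0,T_*]}\psi<\infty$, I would repeat the barrier construction from the proof of Theorem~\ref{thm_asymptbdfastspeed2} (after the conformal change $h=e^{2u_N}g$ with $k_h$ close to $1$), now on $U_{\delta_2}\times[0,T_*]$: a test function $\overline v(x,t)=-\log(b\,r(x)+\varepsilon(t))-w(x)+e^{\phi_{02}(t)-N}r(x)$, with $\varepsilon=e^{-\phi_{02}+N}$ and a large constant $b$ in place of $\phi_{02}^{1/3}$, still is a supersolution of $(\ref{equn_CDP2-2})$ on $U_{\delta_2}\times[0,T_*]$ once the interpolation rate of $\phi_{02}$ on $[0,T_3]$ is large enough; the comparison on $\Sigma_{\delta_2}\times[0,T_*]$ follows from the crude bound that $u_2$ is bounded above on this compact set, after enlarging the constant $A$ in $w$, and the comparison on $U_{\delta_2}\times\{0\}$ follows from the same choice of $A$ together with a compatible $2$-jet of $u_{02}$ (which simultaneously makes $k_{e^{2u_2}g}(\cdot,0)$ large). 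Lemma~\ref{lem_comparisonCDP1} then gives $\overline v\ge u_2-u_N$ on $U_{\delta_2}\times[0,T_*]$, and comparing normal derivatives at $\partial M$ yields $k_{e^{2u_2}g}\ge b-1-C'e^{-\phi_{02}}$ there; since $\phi_{02}\ge\phi_{02}(0)$ is large on $[0,T_*]$, taking $b$ and $\phi_{02}(0)$ large makes the right side $\ge M_0\ge\psi$.

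Once $u_0\le u_{02}$ and $\psi\le k_{e^{2u_2}g}$ on $\partial M\times[0,\infty)$ are in hand, and both Cauchy--Dirichlet pairs meet the hypotheses of Theorem~\ref{thm_CDPconvergnl}, Theorem~\ref{thm_convergCO1} applies and gives the conclusion. The main obstacle, I expect, is the second half of the verification of $\psi\le k_{e^{2u_2}g}$: on the compact initial interval one cannot invoke the asymptotics of Theorem~\ref{thm_asymptbdfastspeed2}, whose barrier genuinely uses that $u_2$ is already close to $u_{LN}$ on the inner hypersurface $\Sigma_{\delta_2}$, so a separate supersolution barrier has to be set up there; and, more delicately, all the free parameters --- $\delta_2$, $p$, $A$ and the leading coefficient $b$ of the test function, the interpolation rate and size of $\phi_{02}$ (in particular $\phi_{02}(0)$ and $T_3$), the $2$-jet of $u_{02}$ along $\partial M$, and the cut--off time $T_*$ with the resulting bound $M_0$ --- must be fixed in a consistent order.
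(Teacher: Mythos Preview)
Your overall strategy coincides with the paper's: reduce to Theorem~\ref{thm_convergCO1} by producing an upper Cauchy--Dirichlet barrier $u_2$ whose geodesic curvature dominates $\psi$ and whose initial value dominates $u_0$. Where you diverge is in trying to enforce both inequalities \emph{from $t=0$} by a direct construction of $(u_{02},\phi_{02})$ and a separate finite-time barrier on $[0,T_*]$. The paper avoids this entirely with a time-shift. One fixes \emph{any} pair $(u_{02},\phi_{02})$ meeting the hypotheses of Theorem~\ref{thm_CDPconvergnl} with $\phi_{02}(t)=y(t)$ for $t\ge T_1$, lets $u_2$ be the resulting solution, and then waits: since $u_0<u_{LN}$ and $u_2\to u_{LN}$ locally uniformly (with the boundary-layer lower bound of Lemma~\ref{lem_lowerboundbdary} handling the region near $\partial M$), there is $T_2$ with $u_2(\cdot,t)\ge u_0$ on $\overline M$ for $t\ge T_2$; and by Theorem~\ref{thm_asymptbdfastspeed2} there is $T_4\ge T_2$ with $k_{e^{2u_2}g}\ge\phi_{02}^{1/3}-2$ on $\partial M\times[T_4,\infty)$. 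Now set $v(x,t)=u_2(x,t+T_4)$. The equation is autonomous in $t$, so $v$ is again a solution of the flow; $v(\cdot,0)=u_2(\cdot,T_4)\ge u_0$; and since $y$ is increasing, $k_{e^{2v}g}(\cdot,t)=k_{e^{2u_2}g}(\cdot,t+T_4)\ge y(t+T_4)^{1/3}-2\ge y(t)^{1/3}-2\ge\psi$ for all $t\ge0$. One then applies the comparison theorem with $u_1$ below and $v$ above, exactly as in Theorem~\ref{thm_convergCO1}.

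Your direct route is not obviously wrong, but the circularity you flag at the end is real and not resolved: $T_*$ (the time after which Theorem~\ref{thm_asymptbdfastspeed2} applies) depends on how quickly $u_2$ approaches $u_{LN}$ on $\Sigma_{\delta_2}$, which in turn depends on $(u_{02},\phi_{02})$; but you want to choose $\phi_{02}(0)$ and the $2$-jet of $u_{02}$ after knowing $M_0=\sup_{\partial M\times[0,T_*]}\psi$. Moreover, your finite-time barrier on $U_{\delta_2}\times[0,T_*]$ presupposes an upper bound for $u_2$ on $\Sigma_{\delta_2}\times[0,T_*]$ that again depends on the not-yet-fixed data. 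The time shift dissolves all of this: you never need the barrier to work on $[0,T_*]$, because the translated solution $v$ already sits in the regime where Theorem~\ref{thm_asymptbdfastspeed2} is valid from its own $t=0$. This is the missing idea; once you use it, the construction of $(u_{02},\phi_{02})$ is essentially arbitrary and no parameter-consistency issue arises.
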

\begin{Remark}
If in addition, $\phi_{01}$ satisfies the slow growth condition in Theorem \ref{thm_asymptbdlowspeed1} or Theorem \ref{thm_lowspeedCDdataperturb} for $t$ large, then $k_{e^{2u_1}g}$ is uniformly bounded for all $t>0$ and $k_{e^{2u_1}g}\to1$ uniformly on $\partial M$ as $t\to\infty$.
\end{Remark}
\begin{proof}
By Theorem \ref{thm_CDPconvergnl}, there exists a unique solution $u_1$ to $(\ref{equn_RF3-2})-(\ref{equn_ibp3-2})$ with the initial and boundary data $u_{01}$ and $\phi_{01}$ for all $t>0$ and $u_1 \to u_{LN}$ locally uniformly in $C^2$ sense in $M$. For a given positive function $y(t)$, we can always find a pair of Cauchy-Dirichlet data $(u_{02},\phi_{02})$ satisfying the condition in Theorem \ref{thm_CDPconvergnl}, and we also require that there exists $T_1>0$ such that for $t\geq T_1$, we have that $\phi_{02}=\phi_{02}(t)= y(t)$ on $\partial M$ and hence
\begin{align*}
\phi_{02}'(t)\geq 3\phi_{02}(t)+1.
\end{align*}
Then by Theorem \ref{thm_CDPconvergnl}, there exists a unique solution $u_2$ to $(\ref{equn_RF3-2})-(\ref{equn_ibp3-2})$ with the initial and boundary data $u_{02}$ and $\phi_{02}$ for all $t>0$ and $u_2 \to u_{LN}$ locally uniformly in $C^2$ sense in $M$. Moreover, since $u_0$ is continuous on $\overline{M}$ and $u_0<u_{LN}$ on $M$, by the proof of Theorem \ref{thm_CDPconvergnl}, Lemma \ref{lem_lowerboundbdary} and Theorem \ref{thm_Flow1convergence}, there exists $T_2>T_1$ such that $u_2(x,t)\geq u_0(x)$ for $x\in \overline{M}$ when $t\geq T_2$. 
 On the other hand, by Theorem \ref{thm_asymptbdfastspeed2}, there exists $T_4>T_2$ such that
 \begin{align*}
k_{e^{2u_2}g}=e^{-u_2}(\frac{\partial u_2}{\partial n_g}+k_g)\geq \phi_{02}^{\frac{1}{3}}-2,
\end{align*}
on $\partial M\times[T_4,\infty)$, where $n_g$ is the outer normal vector fields on $\partial M$ on $(\overline{M},g)$. Now let $v(x,t)=u_{2}(x,t+T_4)$ for $(x,t)\in \overline{M}\times[0,\infty)$. Then we have
\begin{align*}
u_0(x)<v(x,0),
\end{align*}
for $x\in \overline{M}$, and
\begin{align*}
\psi(x,t)\leq y(t)^{\frac{1}{3}}-2\leq (\phi_{02}(t+T_4))^{\frac{1}{3}}-2\leq k_{e^{v(x,t)}g},
\end{align*}
for $(x,t)\in \partial M\times[0,+\infty)$. Therefore, by Theorem \ref{thm_convergCO1} and Theorem \ref{thm_comparison2}, there exists a unique solution $u(x,t)$ to the initial-boundary value problem $(\ref{equn_RF3-1})-(\ref{equn_ibp3-1})$ for all $t\geq 0$ and
\begin{align*}
u_1(x,t)\leq u(x,t)\leq v(x,t)
\end{align*}
for $(x,t)\in \overline{M}\times[0,+\infty)$. Then as discussed in the proof of Theorem \ref{thm_convergCO1}, we have that $u(x,t)$ converges locally uniformly to $u_{LN}(x)$ in $C^2$ sense in $M$. This completes the proof of Theorem \ref{thm_convergCO2}.

\end{proof}
\begin{Remark}
The upper bound condition of the initial data in Theorem \ref{thm_convergCO2}
\begin{align*}
u_0<u_{LN}
\end{align*}
is to guarantee the existence of a solution $u_{02}$ to the Cauchy-Dirichlet problem $(\ref{equn_RF3-2})-(\ref{equn_ibp3-2})$ as an upper bound of $u$, when applying the comparison theorem. Notice that for a solution $u_2$ to the Cauchy-Dirichlet problem $(\ref{equn_RF3-2})-(\ref{equn_ibp3-2})$ which converges to $u_{LN}$ locally, $u_{2}$ will be sufficiently close to $u_{LN}$ on any compact subset of $M$ for $t$ large.
\end{Remark}

Now we are ready to prove Theorem \ref{thm_convergCO3}, which drops the upper bound condition on $u_0$ in Theorem \ref{thm_convergCO2}.

\begin{proof}[Proof of Theorem \ref{thm_convergCO3}]
The only difference of the proof here from that of Theorem \ref{thm_convergCO2} is the upper bound estimates of $u$. Let $u_{02}(x)\in C^{4,\alpha}(\overline{M})$ be a solution to the equation $(\ref{equn_LNFn2})$. We choose $\phi_{02}\in C^{4+\alpha,2+\frac{\alpha}{2}}(\partial M,[0,T'])$ for all $T'>0$ so that $\phi_{02}$ and $u_{02}$ satisfy the $C^{4+\alpha,2+\frac{\alpha}{2}}$ compatibility condition $(\ref{equn_RF3-2-1cpr1})$ and  $\frac{\partial \phi_{02}}{\partial t}(x,0)=\frac{\partial^2\phi_{02}}{\partial t^2}(x,0)=0$ for $x\in \partial M$. Moreover, we choose $\phi_{02}$ so that $\frac{\partial\phi_{02}}{\partial t}\geq0$ on $\partial M\times [0,\infty)$ and $\phi_{02}=\phi_{02}(t)=y(t+t_1)$ for $t\geq T_1$ for some $t_1\geq0$ and $T_1>0$ large. Indeed, we can first define $\phi_{02}$ for $t\geq T_1$ and for $t\geq0$ small, and then extend it. 
  Then by Theorem \ref{thm_CDPconvergnl}, there exists a unique solution $u_2$ to $(\ref{equn_RF3-2})-(\ref{equn_ibp3-2})$ with the initial and boundary data $u_{02}$ and $\phi_{02}$ for all $t>0$ and $u_2 \to u_{LN}$ locally uniformly in $C^4$ sense in $M$. Moreover, by the proof of Theorem \ref{thm_convertibp1-a}, $\frac{\partial u_{2}}{\partial t}\geq0$ on $\overline{M}\times[0,+\infty)$ and hence, $u_2(x,t)\geq u_{02}(x)$ for $(x,t)\in \overline{M}\times[0,\infty)$. The same as in the proof of Theorem \ref{thm_convergCO2}, we let $v(x,t)=u_2(x,t+T_4)$ on $\overline{M}\times[0,\infty)$ for $T_4>T_1$ sufficiently large, such that  by Theorem \ref{thm_asymptbdfastspeed2},
\begin{align*}
k_{e^{2v(x,t)}g}=e^{-v(x,t)}(\frac{\partial v}{\partial n_g}+k_g)\geq (\phi_{02}(x,t+T_4))^{\frac{1}{3}}-2,
\end{align*}
for $(x,t)\in \partial M\times[0,\infty)$, where $n_g$ is the outer normal vector fields on $\partial M$ on $(\overline{M},g)$. Let $D=\max\{\sup_{M}(u_0-u_{02}),0\}+1$. Recall that $\phi_{02}=\phi_{02}(t)=y(t+t_1)$ with some $t_1\geq 0$ for $t\geq T_4$, and hence for any $t_3>t_2>T_4$, we have
\begin{align*}
\phi_{02}(t_3)\geq e^{3(t_3-t_2)}\phi_{02}(t_2).
\end{align*}
Let $T_5\geq D$ and $\varphi(x,t)=v(x,t+T_5)+D$. Then since
\begin{align*}
\Delta_g v-K_g=e^{2v}(v_t+1)\geq e^{2v}\geq0,
\end{align*}
on $\overline{M}\times[0,\infty)$, we have
\begin{align*}
\varphi_t(x,t)=e^{-2v}(\Delta_gv-K_g)-1=e^{2D}e^{-2\varphi(x,t)}(\Delta_g\varphi(x,t)-K_g)-1\geq e^{-2\varphi(x,t)}(\Delta_g\varphi(x,t)-K_g)-1,
\end{align*}
on $\overline{M}\times[0,\infty)$. Moreover, we have
\begin{align*}
\varphi(x,0)\geq u_0(x),
\end{align*}
for $x\in \overline{M}$, and
\begin{align*}
k_{e^{2\varphi}g}(x,t)&=e^{-\varphi(x,t)}(\frac{\partial \varphi}{\partial n_g}(x,t)+k_g)\\
&=e^{-D}e^{-v(x,t+T_5)}(\frac{\partial v}{\partial n_g}(x,t+T_5)+k_g)\\
&\geq e^{-D}[(\phi_{02}(x,t+T_5+T_4))^{\frac{1}{3}}-2]\\
&\geq (\phi_{02}(x,t+T_4))^{\frac{1}{3}}-2\\
&\geq \psi(x,t)
\end{align*}
for $(x,t)\in \partial M\times[0,+\infty)$. Therefore, by Theorem \ref{thm_convergCO1}, Theorem \ref{thm_comparison2} and Remark \ref{remark_comparison},  we have that there exists a unique solution $u(x,t)$ to the initial-boundary value problem $(\ref{equn_RF3-1})-(\ref{equn_ibp3-1})$ for all $t\geq 0$, and
\begin{align*}
u_1(x,t)\leq u(x,t)\leq \varphi(x,t)
\end{align*}
for $(x,t)\in \overline{M}\times[0,+\infty)$. Indeed, based on the upper and lower bounds control on $u$, by the standard regularity theory of the parabolic equations, the solution $u\in C^{2+\alpha,1+\frac{\alpha}{2}}(\overline{M}\times[0,T'])$ exists for all time $T'>0$. Moreover, on each compact subset $U\subseteq M$, since $u_1\to u_{LN}$ uniformly, we have that $\displaystyle\liminf_{t\to+\infty}(\inf_{x\in U}(u(x,t)-u_{LN}(x)))\geq0$. On the other hand, by the same discussion as in the proof of Theorem \ref{thm_CDPconvergnl} and Theorem \ref{thm_Flow1convergence}, we have
\begin{align*}
\displaystyle\limsup_{t\to+\infty}(\sup_{x\in M}(u(x,t)-u_{LN}(x)))\leq0.
\end{align*}
 Therefore, $u(x,t)\to u_{LN}(x)$ locally uniformly in $M$. Thus, by the standard interior estimates of the parabolic equations, we have
$u(x,t)\to u_{LN}(x)$ locally uniformly in $C^2$ sense in $M$. This completes the proof of Theorem \ref{thm_convergCO3}.

\end{proof}

\section{Discussions on conformal flows with prescribed mean curvature on the boundary in high dimensions and some open problems}\label{section_furtherdiscussion}

Let $(\overline{M},g)$ be a compact Rimennian manifolds of dimension $n$ with boundary $\partial M$ and the interior $M$. The Loewner-Nirenberg problem is to find a complete conformal metric $h$ of $g$ in $M$ such that the scalar curvature $R_h=-n(n-1)$. Identically, one wants to solve the boundary value problem
\begin{align}
&-\frac{4(n-1)}{n-2}\Delta_g u +R_gu =-n(n-1)u^{\frac{n+2}{n-2}},\,\,\text{in}\,\,M,\\
&u(p)\to+\infty,\,\,\,\text{as}\,\,p\to\partial M.
\end{align}
We introduced the Cauchy-Dirichlet problem of two types of curvature flows $g_t=u(\cdot,t)^{\frac{4}{n-2}}\,g$ for $t\geq0$ in \cite{GLi} to the Loewner-Nirenberg problem, paralleling to the approach of Loewner and Nirenberg \cite{LN} and Aviles and McOwen \cite{AM}. One is the direct flow
\begin{align}
&\label{equn_scalar1}u_t=\frac{4(n-1)}{n-2}\Delta u-R_gu-n(n-1)u^{\frac{n+2}{n-2}},\,\,\,\text{in}\,\,M\times [0,+\infty),\\
&u(p,0)=u_0(p),\,\,p\in M,\\
&\label{equn_bdv1}u(q,t)=\phi(q,t),\,\,q\in\,\partial M,
\end{align}
with $\phi(\cdot,t)\to+\infty$ uniformly as $t\to+\infty$;  the other is the Yamabe flow which is conformally invariant
\begin{align}\label{equn_Yamabeflow1}
&u_t=(n-1)u^{-\frac{4}{n-2}}(\Delta_gu-\frac{n-2}{4(n-1)}(R_gu+n(n-1)u^{\frac{n+2}{n-2}}))\\
&u(p,0)=u_0(p),\,\,p\in M,\\
&\label{equn_bdv1}u(q,t)=\phi(q,t),\,\,q\in\,\partial M,
\end{align}
with $\phi(\cdot,t)=+\infty$ uniformly as $t\to+\infty$.  More generally, one can consider the Cauchy-Dirichlet problem of the conformally invariant flow
\begin{align*}
&u_t=F(u^{-\frac{4}{n-2}}(\Delta_gu-\frac{n-2}{4(n-1)}(R_gu+n(n-1)u^{\frac{n+2}{n-2}}))),
\end{align*}
for some function $F:\,[0,\infty)\to[0,\infty)$ with $F(0)=0$. In \cite{GLi2}, we introduced the Cauchy-Dirichlet problem of another conformally invariant flow to the Loewner-Nirenberg problem of the $\sigma_k$-Ricci equation for $1\leq k\leq n$ -- the $\sigma_k$-Ricci flow $g_t=e^{2u(\cdot,t)}\,g$:
\begin{align}\label{equn_sigmakRicciflow}
&2ku_t=\log(\sigma_k(\bar{\nabla}^2u))-\log(\bar{\beta}_{k,n})-2ku,\,\,\text{in}\,\,\overline{M}\times[0,+\infty),\\
&u\big|_{t=0}=u_0,\\
&u=\phi,\,\,\,\text{on}\,\,\partial M\times[0,+\infty),
\end{align}
where $\bar{\beta}_{k,n}=(n-1)^k \left(\begin{matrix}n\\ k\end{matrix}\right)$, and $\sigma_k(\bar{\nabla}^2u)$ is the $k$-th elementary symmetric function of the eigenvalues of the matrix
\begin{align}
-\text{Ric}(g_t)=\bar{\nabla}^2 u =-\text{Ric}(g) +(n-2)\nabla_g^2u+\Delta_gu \,g\,+\,(n-2)\,(|du|_g^2g-du \otimes du).
\end{align}
In particular, when $k=1$, the steady-state solution of $(\ref{equn_sigmakRicciflow})$ corresponds to the conformal metric with scalar curvature $-n(n-1)$. We showed in \cite{GLi2} that when the Dirichlet boundary data $\phi(\cdot,t)\to+\infty$ not too slowly as $t\to\infty$ with suitable initial data $u_0$, the Cauchy-Dirichlet problem of the flow $(\ref{equn_sigmakRicciflow})$ converges locally uniformly to the Loewner-Nirenberg solution $u_{LN}$ in $M$. similar convergence results were obtained in \cite{GLi} for the two types of flows. It can be generalized to some other conformally invariant flows.

It is natural to consider the initial-boundary value problem of these flows with the Dirichlet boundary condition replaced by the prescribed mean curvature on the boundary $\partial M$:
\begin{align}
H_{g_t}\big|_{\partial M}=\psi(\cdot,t)
\end{align}
on $\partial M\times[0,\infty)$, which can be seen as generalization of the prescribed geodesic curvature problem of the $2$-D normalized Ricci flow. We wonder for what initial and boundary data $(u_0,\,\psi)$ does the flow converges locally uniformly to the Loewner-Nirenberg solution $u_{LN}$, as $t\to\infty$.

The asymptotic behavior of the mean curvature $H_{g_t}$ on $\partial M$ along the Cauchy-Dirichlet problem of the flows can be discussed very similarly as the $2$-D normalized Ricci flow in this paper: when the Dirichlet data $\phi$ increases slowly to infinity, the mean curvature of $\partial M$ satisfies that $H_{g_t}\to n-1$, as $t\to\infty$; while when $\phi$ increases sufficiently fast as $t\to\infty$, the mean curvature of $\partial M$ satisfies that $H_{g_t}\to \infty$. Also, a similar comparison theorem as Theorem \ref{thm_comparison2} can be obtained by a careful choice of the class of initial-boundary data $(u_0,\,\psi)$ and hence, the long time existence and the uniformly local convergence to the Loewner-Nirenberg solution of the solution to the initial-boundary value problem of the corresponding flows might be obtained in a parallel approach as the $2$-D normalized Ricci flow here.

On a compact $2$-D surface $\overline{M}$ with boundary $\partial M$ and its interior $M$, for the normalized Ricci flows $(\ref{equn_RF3-1})$ on $\overline{M}$ which converges locally uniformly to the Loewner-Nirenberg metric (the complete hyperbolic metric) in $M$, there exists a $1$-$1$ correspondence of the Dirichlet boundary data $\phi=u\big|_{\partial M}$ and the geodesic curvature $\psi$ on $\partial M$ for $t\in[0,\infty)$. So it is natural to use the auxiliary Cauchy-Dirichlet problem to describe the solution to $(\ref{equn_RF3-1})-(\ref{equn_ibp3-1})$. The question is: Is there a more intrinsic way to prescribe the boundary data $\psi$ in $(\ref{equn_RF3-1})-(\ref{equn_ibp3-1})$ in order that the solution $u$ converges to the Loewner-Nirenberg solution? Especially, what is the clearer description of the set of the boundary data $\psi$ with $\psi\to1$ as $t\to\infty$, in order that the solution $u$ converges to the Loewner-Nirenberg solution?

\begin{appendix}
\section{}\label{Appendix1}
Now we consider the boundary data $\tilde{\phi}$ chosen in the proof of Theorem \ref{thm_Flow1convergence}. Recall that $\phi(x,t)\in C^{2+\alpha,1+\frac{\alpha}{2}}(\partial M\times[0,\tau])$ for any $\tau>0$, and $\frac{\partial \phi}{\partial t}(x,t)\geq0$ for $(x,t)\in\partial M\times[T,\infty)$ for some $T>0$, and $\phi(x,t)\geq \log(\xi(t))$ for $(x,t)\in \partial M\times [T_1,\infty)$ with $\xi(t)$ low-speed increasing satisfying $(\ref{inequn_lowspeedinc})$ and some $T_1>0$.  Define the function
\begin{align*}
f(x,t)=\phi(x,t)-1+\frac{1}{100}(1-e^{-t})
 \end{align*}
 on $\partial M\times[T,\infty)$. Hence, $\frac{\partial f}{\partial t}(x,t)\geq \frac{1}{100}e^{-t}>0$ on $\partial M\times[T,\infty)$.  We now want to use the density of $C^5$ functions on $C^{2,1}(\partial M\times[T,\infty))$ to find a function $\tilde{\phi}\in C^5(\partial M\times[T,\infty))$ sufficiently close to the function $f(x,t)$ in $C^1(\partial M\times[T,\infty))$ such that
 \begin{align*}
 \phi-2<\tilde{\phi}<\phi,
 \end{align*}
 and $\tilde{\phi}_t>0$ on $\partial M\times[T,\infty)$. This is an exercise in calculus.

Notice that $\phi\to \infty$ uniformly on $\partial M$ as $t\to \infty$. Now we choose $T_0=T,\,T_1>T_0+10000,\,...,\,T_{k+1}>T_k+10000,...$ for $k\in \mathbb{N}$. By definition, $f_t(x,t)\geq \frac{e^{-T_{k+1}-1}}{100}$ for $(x,t)\in \partial M\times[T_k,T_{k+1}+1]$.

Let $k\in \mathbb{N} \bigcup \{0\}$. {\bf Claim}: On each $[T_k,T_{k+1}+1]$, we can find $\theta_k(x,t)$ such that $\theta_k$ sufficiently close to $f$ in $C^1(\partial M\times[T_k,T_{k+1}+1])$, and $\frac{\partial}{\partial t}\theta_k>0$ in $\partial M\times[T_k,T_{k+1}+1]$, and also \begin{align*}
\theta_k(x,t)>f(x,t)-\frac{e^{-T_k-1}}{200},
 \end{align*}
 for $(x,t)\in \partial M\times[T_k,T_{k}+1]$; while
 \begin{align*}
 \theta_k(x,t)<f(x,t)-\frac{e^{-T_{k+1}-1}}{200},
  \end{align*}
  for $(x,t)\in \partial M\times[T_{k+1},T_{k+1}+1]$. Indeed, in $\partial M\times[T_k,T_{k}+1]$, choose $\epsilon_k>0$ to be sufficiently small, and $\phi_k(x,t)\in C^5(\partial M\times[T_k,T_{k+1}+1])$ is in the $\epsilon_k$-neighborhood of $f$ in $C^1(\partial M\times[T_k,T_{k+1}+1])$ ( $\phi_k$ is almost $f$ in $C^1$ sense). We take $\epsilon_k<10^{-6}e^{-T_{k+1}-1}$. For $(x,t)\in \partial M\times[T_{k},T_{k+1}+1]$, let $\sigma_k(t)\in C^5([T_{k},T_{k+1}+1])$ such that $\sigma_k(t)=0$ for $t\leq T_{k+1}-9$, and also,
  \begin{align*}
  0\leq \sigma_k'(t)<\frac{e^{-T_{k+1}-1}}{200},
   \end{align*}
   for $t\in[T_{k+1}-10,T_{k+1}+1]$, and $\sigma_k(t)\in [\frac{3e^{-T_{k+1}-1}}{400},\frac{e^{-T_{k+1}-1}}{100}]$ for $t\in[T_{k+1},T_{k+1}+1]$. Let \begin{align*}
   \theta_k(x,t)=\phi_k(x,t)-\sigma_k(x,t),
    \end{align*}
    on $\partial M\times[T_k,T_{k+1}+1]$. Then $\frac{\partial}{\partial t}\theta_k>0$ on $\partial M\times[T_k,T_{k+1}+1]$. Then $\theta_k$ realizes the function in the {\bf Claim} for any $k\in \mathbb{N}\bigcup \{0\}$. Therefore, $\theta_k(x,t)\leq \theta_{k+1}(x,t)$ for $(x,t)\in \partial M\times[T_{k+1},T_{k+1}+1]$ for each $k\in \mathbb{N} \bigcup \{0\}$. Let $\eta_k(t)\in C^5([T_k,T_{k+2}])$ such that $\eta_k(t)=0$ for $t\leq T_{k+1}$ and $\eta_k(t)=1$ for $t\geq T_{k+1}+1$, and $\eta'(t)\geq 0$ for $t\in[T_k,T_{k+1}+2]$ for $k\in \mathbb{N}$.

Let $\tilde{\phi}(x,t)=\theta_0(x,t)$ for $(x,t)\in \partial M\times[T_0,T_0+1]$. Now we let $\tilde{\phi}(x,t)=\theta_k(x,t)$ for $(x,t)\in \partial M\times[T_k+1,T_{k+1}]$, and \begin{align*}
\tilde{\phi}(x,t)=\theta_k(x,t)(1-\eta_k(t))+\theta_{k+1}(x,t)\eta_k(t)
 \end{align*}
 for $(x,t)\in \partial M\times[T_{k+1},T_{k+1}+1]$, for each $k\in \mathbb{N}\bigcup\{0\}$. 

Therefore, $\tilde{\phi}$ is the required function: $\tilde{\phi}\in C^5(\partial M\times[T,\infty))$ close to $f(x,t)$ in $C^1(\partial M\times[T,\infty))$ such that $\phi-2<\tilde{\phi}<\phi$ and $\tilde{\phi}_t>0$ on $\partial M\times[T,\infty)$. The key point here is that
\begin{align*}
\tilde{\phi}_t'(x,t)=\frac{\partial}{\partial t}\theta_k(x,t)(1-\eta_k(t))+\frac{\partial}{\partial t}\theta_{k+1}(x,t)\eta_k(t)+\eta_k'(t)(\theta_{k+1}(t)-\theta_k(t))>0,
 \end{align*}
 for $(x,t)\in \partial M\times[T_{k+1},T_{k+1}+1]$ for $k\in \mathbb{N} \bigcup\{0\}$.

\end{appendix}


\begin{thebibliography}{s2}


\bibitem{ACF} L. Andersson, P. Chru$\acute{\text{s}}$ciel, H. Friedrich, {\em On the regularity of solutions to the Yamabe equation
and the existence of smooth hyperboloidal initial data for Einsteins field equations}, Comm. Math. Phys., 149(1992), 587-612.


\bibitem{AM} P. Aviles, R. C. McOwen, {\em  Complete conformal metrics with negative scalar curvature in compact Riemannian manifolds}, Duke Math. J. 56(2) (1988) 395-398.


\bibitem{AM1} P. Aviles, R. C. McOwen, {\em  Conformal deformation to constant negative scalar curvature on
noncompact Riemannian manifolds}, J. Diff. Geom., {\bf{27}} (1988), 225-239.

\bibitem{BE} C. Bandle, M. Ess$\acute{\text{e}}$n, {\em  On the solution of quasilinear elliptic problems with boundary blow-up}, Symposia Math., {\bf{35}} (1994), 93-111.


\bibitem{Bieberbach} L. Bieberbach, {\em  $\Delta u = e^u$ und die automorphen funktionen}, Math. Ann., {\bf{77}} (1916), 173-212.


\bibitem{Brendle1} S. Brendle, {\em  Curvature flows on surfaces with boundary}, Math. Ann., {\bf{324}} (2002), no. 3, 491-519.


\bibitem{Brendle2} S. Brendle, {\em  A family of curvature flows on surfaces with boundary}, Math. Z., {\bf{241}} (2002), no. 4, 829-869.

\bibitem{Chow} B. Chow, {\em  The Ricci flow on the $2$-sphere}, J. Diff. Geom., {\bf{33}} (1991), 325-334.

\bibitem{TChow} T.A. Chow, {\em  Ricci flow on manifolds with boundary with arbitrary initial metric}, J. Reine Angew. Math., {\bf{783}} (2022), 159-216.

\bibitem{Cortissoz} J.C. Cortissoz, {\em  Three-manifolds of positive curvature and convex weakly umbilic boundary}, Geom. Dedicata 138 (2009), 83-98.

\bibitem{CM} J.C. Cortissoz, A. Murcia, {\em  The Ricci flow on surfaces with boundary}, Comm. Anal. Geom. 27 (2019), no. 2, 377-420.

\bibitem{Gianniotis1} P. Gianniotis, {\em  Boundary estimates for the Ricci flow}, Calc. Var. Partial Differential Equations 55 (2016), no. 1, Art. 9, 21 pp.

\bibitem{Gianniotis2} P. Gianniotis, {\em  The Ricci flow on manifolds with boundary}, J. Differential Geom. 104 (2016), no. 2, 291-324.

\bibitem{Hamilton1} R. Hamilton, {\em  Three-manifolds with positive Ricci curvature}, J. Diff. Geom., {\bf{17}} (1982), 255-306.

\bibitem{Hamilton2} R. Hamilton, {\em  The Ricci flow on surfaces}, Mathematics and General Relativity, Contemporary
Math., Vol. 71, Amer. Math. Soc, Providence, RI, 1988, 237-262.

\bibitem{Hamilton}  R. Hamilton, {\em  Four-manifolds with positive curvature operator}, J. Diff. Geom., {\bf{24}} (1986), 153-179.

\bibitem{HJ} Q. Han, X. Jiang, {\em  Boundary regularity of minimal graphs in the hyperbolic space}, J. Reine Angew. Math. 801 (2023), 239-272.

\bibitem{HS} Q. Han, W. Shen, {\em  Boundary behaviors for Liouville's equation in planar singular domains}, J. Funct. Anal. 274 (2018), no.6, 1790-1824.

\bibitem{Jouttijarvi}  R. Jouttij$\ddot{\text{a}}$rvi, {\em  Novel Boundary Conditions for the Ricci Flow}, preprint, arXiv: 2403.09169v1.

\bibitem{Kichenassamy} S. Kichenassamy, {\em Boundary behavior in the Loewner-Nirenberg problem}, J. Funct. Anal. {\bf 222} (2005), 98-113.

\bibitem{KS} K. Kunikawa, Y. Sakurai, {\em  Hamilton type entropy formula along the Ricci flow on surfaces with boundary}, Comm. Anal. Geom. 31 (2023), no. 7, 1655-1668.

\bibitem{LSU} O. A. Ladyzenskaya, V. A. Solonnikov, and N. N. Uraltceva, {\em  Linear and Quasilinear Equations of Parabolic Type}, Transl. Math. Mono. 23, Amer. Math. Soc., Providence, R.I., (1968).

\bibitem{LM} A.C. Lazer, P.J. McKenna, {\em  On a problem of Bieberbach and Rademacher}, Nonlinear Anal. 21 (1993), 327-335.

\bibitem{LM2} A.C. Lazer, P.J. McKenna, {\em  Asymptotic behavior of boundary blow-up problems}, Differential and
Integral Equations 7 (1994), 1001-1019.

\bibitem{GLi} G. Li, {\em Two flow approaches to the Loewner-Nirenberg problem on manifolds}, J. Geom. Anal. {\bf 32} (2022), no.1, Paper No.7, 33 pp.

\bibitem{GLi2} G. Li, {\em A flow approach to the generalized Loewner-Nirenberg problem of the $\sigma_k$-Ricci equation}, Calc. Var. 61, 169 (2022). https://doi.org/10.1007/s00526-022-02283-8.

\bibitem{TLi} T. Li, {\em  The Ricci flow on surfaces with boundary}, Thesis (Ph.D.)-University of California, San Diego.

\bibitem{Lieberman} G.M. Lieberman, {\em  Second order parabolic differential eqautions}, World Scientific Publishing Co., Inc., River Edge, NJ. ISBN: 981-02-2883-X (1996).

\bibitem{LN} C. Loewner, L. Nirenberg, {\em Partial differential equations invariant under conformal or
projective transformations}, Contributions to Analysis, Academic Press, New York, 1974, pp. 245-272.

\bibitem{Mazzeo} R. Mazzeo, {\em  Regularity for the singular Yamabe problem}, Indiana Univ. Math. J., {\bf{40}} (1991), 1277-1299.

\bibitem{Pulemotov} A. Pulemotov, {\em Quasilinear parabolic equations and the Ricci flow on manifolds with boundary}, J. Reine Angew. Math. 683 (2013), 97-118.


\bibitem{Shen} Y. Shen, {\em On Ricci deformation of a Riemannian metric on manifold with boundary}, Pacific J. Math. 173 (1996), no. 1, 203-221.

\bibitem{Zhang} H. Zhang, {\em Evolution of curvatures on a surface with boundary to prescribed functions}, Manuscripta Math. 149 (2016), no. 1-2, 153-170.


\end{thebibliography}
 \end{document}